\setlist[enumerate,1]{label=\tn{(\alph*)}}
\setlist[enumerate]{topsep=4pt, itemsep=3pt}
\newcommand{\downmapsto}{\rotatebox[origin=c]{-90}{$\scriptstyle\mapsto$}\mkern2mu}
\newcommand{\textdef}[1]{{\textit{#1}}}
\g@addto@macro\bfseries{\boldmath}
  \definecolor{dark-red}{rgb}{0.6,0.15,0.15}
   \definecolor{dark-blue}{rgb}{0.15,0.15,0.6}
   \definecolor{medium-blue}{rgb}{0,0,0.5}
\numberwithin{equation}{section}
\theoremstyle{plain} 
\newtheorem{thm}{Theorem}[section]
\newtheorem{cor}[thm]{Corollary}
\newtheorem{prop}[thm]{Proposition}
\newtheorem{lem}[thm]{Lemma}
\newtheorem*{thm*}{Theorem}
\theoremstyle{definition}
\newtheorem{defn}[thm]{Definition}
\let\c@equation=\c@equation
\let\c@lem=\c@thm
\let\c@cor=\c@thm
\let\c@conj=\c@thm
\let\c@prop=\c@thm
\let\c@lem=\c@thm
\let\c@defn=\c@thm
\let\c@notation=\c@thm
\let\c@note=\c@thm
\let\c@exmp=\c@thm
\let\c@ex=\c@thm
\let\c@exmps=\c@thm
\let\c@rem=\c@thm
\let\c@warn=\c@thm
\let\c@claim=\c@thm
\let\c@convention=\c@thm
\let\c@conventions=\c@thm
\let\c@quest=\c@thm
\let\c@facts=\c@thm
\DeclareMathOperator{\id}{id}
\DeclareMathOperator{\Aut}{\mathrm{Aut}}
\DeclareMathOperator{\Unitary}{\mathrm{U}}
\newcommand{\Z}{\mathbb{Z}}
\newcommand{\T}{\mathbb{T}}
\newcommand{\C}{\mathbb{C}}
\newcommand{\R}{\mathbb{R}}
\newcommand{\fA}{\mathfrak{A}}
\newcommand{\fB}{\mathfrak{B}}
\newcommand{\fN}{\mathfrak{N}}
\newcommand{\cH}{\mathcal{H}}
\newcommand{\cP}{\mathcal{P}}
\newcommand{\cE}{\mathcal{E}}
\renewcommand{\Im}{\operatorname{Im}}
\newcommand{\sS}{\mathscr{S}}
\newcommand{\sP}{\mathscr{P}}
\newcommand{\hilbH}{\mathcal{H}}
\DeclareMathAlphabet{\mathboondoxcal}{U}{BOONDOX-cal}{m}{n}
\SetMathAlphabet{\mathboondoxcal}{bold}{U}{BOONDOX-cal}{b}{n}
\DeclareMathAlphabet{\mathbboondoxcal} {U}{BOONDOX-cal}{b}{n}
\DeclareFontFamily{OMX}{MnSymbolE}{}
\DeclareSymbolFont{MnLargeSymbols}{OMX}{MnSymbolE}{m}{n}
\DeclareFontShape{OMX}{MnSymbolE}{m}{n}{
    <-6>  MnSymbolE5
   <6-7>  MnSymbolE6
   <7-8>  MnSymbolE7
   <8-9>  MnSymbolE8
   <9-10> MnSymbolE9
  <10-12> MnSymbolE10
  <12->   MnSymbolE12
}{}
\DeclareFontShape{OMX}{MnSymbolE}{b}{n}{
    <-6>  MnSymbolE-Bold5
   <6-7>  MnSymbolE-Bold6
   <7-8>  MnSymbolE-Bold7
   <8-9>  MnSymbolE-Bold8
   <9-10> MnSymbolE-Bold9
  <10-12> MnSymbolE-Bold10
  <12->   MnSymbolE-Bold12
}{}
\let\llangle\@undefined
\let\rrangle\@undefined
\DeclareMathDelimiter{\llangle}{\mathopen}%
                     {MnLargeSymbols}{'164}{MnLargeSymbols}{'164}
\DeclareMathDelimiter{\rrangle}{\mathclose}%
                     {MnLargeSymbols}{'171}{MnLargeSymbols}{'171}
\newcommand{\cK}{\mathcal{K}}
\newcommand{\cL}{\mathcal{L}}
\newcommand{\cU}{\mathcal{U}}
\newcommand{\bbS}{\mathbb{S}}
\newcommand{\bbP}{\mathbb{P}}
\newcommand{\bbR}{\mathbb{R}}
\newcommand{\bbZ}{\mathbb{Z}}
\newcommand{\bbN}{\mathbb{N}}
\newcommand{\tn}[1]{\textnormal{#1}}
\newcommand{\bbC}{\mathbb{C}}
\newcommand{\defeq}{\vcentcolon=}
\newcommand{\vecspan}{\tn{span}}
\DeclareMathOperator{\Log}{Log}
\newcommand{\B}{\mathcal{B}}
\newcommand{\1}{\mathds{1}}
\newcommand{\power}{\mathcal{P}}
\newcommand{\cKU}{\mathcal{U}}
\newcommand{\cKV}{\mathcal{V}}
\newcommand{\cKW}{\mathcal{W}}
\newcommand{\sQ}{\mathscr{Q}}
\DeclareMathOperator{\Ad}{Ad}
\newcommand{\cS}{\mathcal{S}}
\DeclareMathOperator{\diag}{diag}
\title{On the Weak Contractibility of the Space of Pure States}
\author[Spiegel]{Daniel D.\ Spiegel\textsuperscript{1,2}}
\author[Pflaum]{Markus J.\ Pflaum\textsuperscript{3,4}}
\thanks{\textsuperscript{1}Department of Mathematics, University of California, Davis}
\thanks{\textsuperscript{2}Center for Quantum Mathematics and Physics, University of California, Davis}
\thanks{\textsuperscript{3}Department of Mathematics, University of Colorado Boulder}
\thanks{\textsuperscript{4}Center for Theory of Quantum Matter, University of Colorado Boulder}
\begin{document}

\begin{abstract}
  We prove that the space $\sP(\fA)$ of pure states of a nonelementary, simple, separable, real rank zero $C^*$-algebra $\fA$ has trivial homotopy groups of all orders when $\sP(\fA)$ is equipped with the weak* topology. The convex-valued and finite-dimensional selection theorems of Michael are used to deform a family of pure states via the action of a homotopy of unitaries so that the entire family evaluates to one on a given projection $P \in \fA$. Then, the excision theorem of Akemann, Anderson, and Pedersen is used to iterate this deformation for a sequence of projections in $\fA$ excising a base point of the family of pure states, thereby contracting the family to the base point.
  Finally, we compare our weak contractibility result to the spaces of pure states of commutative $C^*$-algebras and rational rotation algebras, and compute the homotopy groups of the latter in terms of the homotopy groups of spheres.
\end{abstract}

\maketitle
\tableofcontents

\newpage

\section{Introduction}

This paper follows up on a prior result of the authors and their coauthors, namely that the pure state space $\sP(\fA)$ of an infinite-dimensional UHF algebra $\fA$ has trivial fundamental group when $\sP(\fA)$ is equipped with the weak* topology \cite[Thm.~4.21]{beaudry2023homotopical}. We note that the fundamental group of $\sP(\fA)$ is independent of the  base point because $\sP(\fA)$ is path-connected in the weak* topology, as follows from more general results of Eilers \cite{EilCCCA} (see also \cite[\S 1.4]{Spiegel} for a review). 


The main result of this paper is that \textit{all} homotopy groups of $\sP(\fA)$ are trivial with respect to the weak* topology. In fact, we prove this for all nonelementary, simple, separable $C^*$-algebras with real rank zero. While the intuition behind the broad idea of the proof is similar to the case of the fundamental group, the technical details are rather different. Thus, this paper provides an alternate (more complicated) proof of the result \cite[Thm.~4.21]{beaudry2023homotopical} for the fundamental group; it does not rely on the results of that paper.

The broad idea can even be demonstrated in the degree zero case, i.e., in how one might construct a path to connect an arbitrary pure state $\psi_0 \in \sP(\fA)$ to a fixed pure state $\omega \in \sP(\fA)$.  For now, let $\fA$ be a UHF algebra, let us think of $\fA$ as an infinite tensor product of matrix algebras $\fA \cong \bigotimes_{i \in \bbN} M_{d_i}(\bbC)$ for some numbers $d_i \geq 2$, and let us take $\omega$ to be the product state represented by the first standard basis vector on each tensor factor. Given $i \in \bbN$, define $E_i \in \fA$ to be the projection onto the first standard basis vector of $\bbC^{d_i}$, tensored with the identity on all other sites. We may apply a unitary $U \in \Unitary(\fA)$ to $\psi_0$ to obtain a new pure state $\psi_1(B) = \psi_0(U^*BU)$ such that $\psi_1(E_1) = 1$. The property $\psi_1(E_1) = 1$ implies that $\psi_1|_{M_{d_1}(\bbC)} = \omega|_{M_{d_1}(\bbC)}$.  Taking a path in $\Unitary(\fA)$ from the identity to $U$ induces a norm-continuous path in $\sP(\fA)$ from $\psi_0$ to $\psi_1$.  In the same manner we can find a path from $\psi_1$ to a pure state $\psi_2$ such that $\psi_2(E_1) = \psi_2(E_2) = 1$, hence $\psi_2|_{M_{d_1}(\bbC) \otimes M_{d_2}(\bbC)} = \omega|_{M_{d_1}(\bbC) \otimes M_{d_2}(\bbC)}$. Continuing this process yields, for all $i \in \bbN$, a pure state $\psi_i$  such that $\psi_i(E_j) = 1$ for all $j \leq i$ and a norm-continuous path from $\psi_{i-1}$ to $\psi_{i}$.

This countable infinity of paths can be fit inside the half-open interval $[0,1)$ by making the paths go progressively faster, for example with the ``speed'' of the paths increasing exponentially. This gives a norm-continuous function on $[0,1)$. At the endpoint $s = 1$ we define the path to be at $\omega$. We then have a weak*-continuous map on $I = [0,1]$ because, for any $B \in \fA$ that is the identity outside of finitely many tensor factors, evaluating the path of states on $B$ yields the constant $\omega(B)$ after some $s' \in [0,1)$.

In applying this procedure to higher degree homotopy groups, one is faced with the challenge of finding a homotopy from an arbitrary weak*-continuous family $\psi_0:\bbS^n \rightarrow \sP(\fA)$, $x \mapsto \psi_{0,x}$ to a family $\psi_1:\bbS^n \rightarrow \sP(\fA)$, $x \mapsto \psi_{1,x}$ satisfying $\psi_{1,x}(E_1) = 1$ for all $x \in \bbS^n$, where $\bbS^n$ is the $n$-sphere. In \cite{beaudry2023homotopical}, it was shown that this could be accomplished when $n = 1$ by acting on $\psi_0$ with a continuous family of operators supported in the first tensor factor. More precisely, one can construct a homotopy $B:\bbS^1 \times I \rightarrow M_{d_1}(\bbC)$ such that:
\begin{enumerate}
	\item $B(x,0)$ is the identity $\1$ for all $x \in \bbS^1$,
	\item $B(x,s) = \1$ whenever $\psi_{0,x}(E_1) = 1$,
	\item $\psi_{0,x}(B(x,s)^*B(x,s)) = 1$ for all $(x,s) \in \bbS^1 \times I$,
	\item $\psi_{0,x}(B(x,1)^*E_1B(x,1)) = 1$ for all $x \in \bbS^1$.
\end{enumerate}
Above we have identified $B(x,s)$ with its image under the canonical embedding $M_{d_1}(\bbC) \rightarrow \fA$. Acting with $B(x,s)$ on $\psi_{0,x}$ induces the desired homotopy in $\sP(\fA)$. Note that when $\sP(\fA)$ is given the weak* topology, we have continuity of the map
\[
\qty{(B, \psi) \in \fA \times \sP(\fA): \psi(B^*B) = 1} \rightarrow \sP(\fA), \quad (B, \psi) \mapsto B\psi
\] 
where we define
\begin{equation}\label{eq:algebra_action_state}
(B\psi)(C) \defeq \psi(B^*CB)
\end{equation}
for all $C \in \fA$. Thus, we may indeed act with $B(x,s)$ on $\psi_{0,x}$ to obtain a weak* continuous homotopy of $\psi_{0,x}$. One may then iterate this procedure as in the degree zero case to contract an arbitrary loop in $\sP(\fA)$.

For $n > 1$ such a homotopy $B$ cannot in general be found, at least not with $M_{d_1}(\bbC)$ as the codomain. For example, if $n = d_1 = 2$, then the pure state space of $M_{d_1}(\bbC)$ is homeomorphic to $\bbS^2$. Taking $\psi_0$ to be given by this homeomorphism, tensored with a fixed pure state on $\bigotimes_{i \geq 2} M_{d_i}(\bbC)$, we cannot find such a homotopy $B$ with codomain $M_{d_1}(\bbC)$ because this would induce a contraction of $\bbS^2$, which does not exist.

In this paper, we show how to find such a $B$ for arbitrary $n$ when the codomain is allowed to be all of $\fA$. In fact, we can find such a homotopy with image in the unitary group $\Unitary(\fA)$. This is supplied by Theorem \ref{thm:Michael_application} and Corollary \ref{cor:infinite_simple}, the latter of which holds for any infinite-dimensional, unital, simple $C^*$-algebra, where $E_1$ is replaced by an arbitrary projection $P \in \fA$. The iterative procedure for the degree zero case may then be carried through to contract an arbitrary $n$-sphere in $\sP(\fA)$, provided that the base point state is excised by a decreasing sequence of projections that evaluate to one on the base point state; this is done in Theorem \ref{thm:iteration_nulhomotopy}. This excision property holds for all states on a unital, separable, real rank zero $C^*$-algebra, as follows easily from the original excision theorem of Akemann, Anderson, and Pedersen \cite{AkemannAndersonPedersenExcision} and from results on real rank zero $C^*$-algebras by Brown and Pedersen \cite{BrownPedersenRealRankZero}.

The extension of the weak contractibility of spaces of pure states to the case of non-unital $C^*$-algebras is straightforward by unitization, except in the case where $\fA$ is infinite-dimensional and elementary, i.e., $*$-isomorphic to the algebra of compact operators on an infinite-dimensional Hilbert space $\cH$. In this case, $\sP(\fA)$ is an Eilenberg-MacLane space of type $K(\bbZ,2)$, as pointed out in Proposition \ref{prop:K(Z,2)}.

Contrary to the case of nonelementary, separable, simple $C^*$-algebras of  real rank zero, which are necessarily noncommutative,
commutative $C^*$-algebras show quite different behavior in regard to the homotopy types of their spaces of pure
states. Gelfand duality implies that all homotopy types of locally compact Hausdorff topological spaces can appear
in the commutative case. Noncommutative tori provide another rich class of examples. 
The irrational rotation algebras are nonelementary, separable, simple, and have real rank $0$, so our main result applies.
The case of rational rotation algebras is quite different, however. Their pure state spaces can be explicitly described
in terms of a certain bundle on the $2$-torus. This allows us to compute the homotopy groups of the pure state spaces
of rational rotation algebras in terms of the homotopy groups of spheres and shows that the pure state spaces in the rational case all have nontrivial homotopy type.
In particular, one concludes that these homotopy types are not invariant under deformation and are also not Morita invariant.

The key ingredients in many of our proofs come from the theory of continuous selections developed by Michael \cite{MichaelSelection,MichaelSelectionII}. We review this theory in Section \ref{sec:michael_selection}. In Section \ref{sec:continuous_unitaries} we apply Michael's convex-valued selection theorem to obtain a number of continuous families of unitaries in an arbitrary unital $C^*$-algebra. These continuous families serve as lemmas to later results but some of these constructions are interesting in their own right, e.g., as in Theorem \ref{thm:unitary_Kadison_lemma} and Theorem \ref{thm:weak*_selection_theorem}. In Section \ref{sec:equiLCn}, we find an equi-$LC^n$ family of subsets of $\Unitary(\fA)$, necessary for applying Michael's finite-dimensional selection theorem. In Section \ref{sec:michael_application} we apply the finite-dimensional selection theorem to prove our main results on the weak contractibility of the space of pure states.
Finally, in Section \ref{sec:comparison}, we consider the commutative case and the case of noncommutative tori.

We note that Michael's theory of selections has been fruitfully applied in the context of the homotopy theory of operator algebras before. Notably, Popa and Takesaki \cite{PopaTakesaki} used a theorem of Michael to prove contractibility of the automorphism group of the hyperfinite type II$_1$ factor $\mathfrak{R}_0$ in the topology induced by the predual. Building on this work, Andruchow and Varela showed that, for a fixed projection $P \in \mathfrak{R}_0$, the space of normal states of with support equivalent to $P$ is weakly contractible when endowed with the norm topology \cite{AndruchowVarela}, and the set of projections equivalent to $P$ is weakly contractible when endowed with the strong operator topology \cite{AndruchowContinuitySupportState}. Finally, we note that the homotopy theory of simple real rank zero $C^*$-algebras has been studied previously by Zhang, who computed the homotopy groups of the unitary group and the space of nontrivial projections for all nonelementary, simple $C^*$-algebras with real rank zero and stable rank one \cite{ZhangHomotopyRealRankZero}.

\subsection*{Notation and Conventions}

All Hilbert spaces will be over the complex numbers $\bbC$ and all inner products will be linear in the second variable. Given a Hilbert space $\hilbH$, we denote the unit sphere of $\cH$ as $\bbS \hilbH = \qty{x \in \hilbH: \norm{x} = 1}$. We denote the set of bounded linear operators by $\B(\hilbH)$ and the unitary group by $\Unitary(\hilbH)$. Likewise, if $\fA$ is a unital $C^*$-algebra, then we denote the unitary group of $\fA$ by $\Unitary(\fA)$. For any $C^*$-algebra $\fA$, we let $\sP(\fA)$ denote the set of pure states of $\fA$, equipped with the weak*-topology. If $\fA$ is unital, then we denote the unit by $\1$.

Given $\Psi,\Omega \in \hilbH$, we let $\ketbra{\Psi}{\Omega} \in \B(\hilbH)$ denote the operator defined by
\[
\ketbra{\Psi}{\Omega}\Phi = \ev{\Omega,\Phi}\Psi
\]
for all $\Phi \in \hilbH$.

We denote the unit interval by $I = [0,1]$. The power set of a set $X$ is denoted $\power(X)$. If $X$ is a metric space, $x \in X$, and $r > 0$, we let $B_r(x)$ denote the open ball of radius $r$ centered on $x$. 

\subsection*{Acknowledgements}
The authors would like to thank Agn\`es Beaudry and Bruno Nachtergaele for helpful conversations. In writing this paper, D.\ D.\ Spiegel was supported by the National Science Foundation (NSF) under Award No.\ DMS 2303063 and M.\ J.\ Pflaum was supported by the NSF under Award No.\ DMS 2055501.


\section{Michael's Theory of Selections}
\label{sec:michael_selection}

We review the key aspects of Michael's theory of continuous selections to be used in this paper. Throughout this section, let $X$ and $Y$ be topological spaces. Following \cite{MichaelSelection,MichaelSelectionII}, a function $\phi:X \rightarrow \cP(Y)\setminus \qty{\varnothing}$ will be called a \textdef{carrier}. One might think of $\phi(x)$ as being a set of solutions to a problem parametrized by $x$. Thus, for each $x$, one assumes that there exists at least one solution, but this solution might not be unique. Michael's results provide conditions under which a family of solutions may be chosen continuously in $x$, i.e., under which there exists a continuous function $s:X \rightarrow Y$ such that $s(x) \in \phi(x)$ for all $x \in X$. Such a continuous function is called a \textdef{selection} for $\phi$.

We will use two theorems of Michael in this paper, both of which require our carriers to be lower semicontinuous, as defined below.

\begin{defn}
A carrier $\phi:X \rightarrow \cP(Y)\setminus \qty{\varnothing}$ is  \textdef{lower semicontinuous} if for every $x_0 \in X$, $y_0 \in \phi(x_0)$, and neighborhood $O_{y_0}$ of $y_0$, there exists a neighborhood $O_{x_0}$ of $x_0$ such that $\phi(x) \cap O_{y_0} \neq \varnothing$ for every $x \in O_{x_0}$.
\end{defn}

With this we state below the first theorem of Michael to be used in this paper. This is sometimes known as the ``convex-valued selection theorem'' \cite{RepovsSemenovReview}.

\begin{thm}[{\cite[Thm.~3.2\textquotesingle\textquotesingle]{MichaelSelection}}]
If $X$ is paracompact Hausdorff, $Y$ is a (real or complex) Banach space, and $\phi:X \rightarrow \cP(Y)\setminus \qty{\varnothing}$ is a lower semicontinuous carrier such that $\phi(x)$ is closed and convex for all $x \in X$, then there exists a selection for $\phi$.
\end{thm}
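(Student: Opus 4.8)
The plan is to reproduce Michael's classical two-step argument. First I would prove an \emph{approximate} selection lemma: for every $\varepsilon > 0$ there is a continuous map $f \co X \to Y$ with $\operatorname{dist}\bigl(f(x),\phi(x)\bigr) < \varepsilon$ for all $x \in X$. The point is that for each $y \in Y$ the set $O_y = \qty{x \in X : \phi(x) \cap B_\varepsilon(y) \neq \varnothing}$ is open by lower semicontinuity, and since every $\phi(x)$ is nonempty the sets $O_y$ cover $X$. Paracompactness then provides a locally finite partition of unity $\qty{p_\lambda}_{\lambda \in \Lambda}$ with $\operatorname{supp} p_\lambda \subseteq O_{y_\lambda}$ for suitable points $y_\lambda \in Y$, and one sets $f(x) = \sum_\lambda p_\lambda(x)\, y_\lambda$, a locally finite and hence continuous sum valued in the Banach space $Y$. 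For fixed $x$, each index $\lambda$ occurring in the sum has $x \in O_{y_\lambda}$, so there is $z_\lambda \in \phi(x)$ with $\norm{y_\lambda - z_\lambda} < \varepsilon$; convexity of $\phi(x)$ gives $\sum_\lambda p_\lambda(x)\, z_\lambda \in \phi(x)$, and $\norm{f(x) - \sum_\lambda p_\lambda(x)\, z_\lambda} \leq \sum_\lambda p_\lambda(x)\,\norm{y_\lambda - z_\lambda} < \varepsilon$. This is the only step using convexity, paracompactness, and the affine structure of $Y$.

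Second, I would bootstrap the approximate lemma into an exact selection by a Cauchy argument in the sup metric on maps into $Y$. Set $\varepsilon_n = 2^{-n}$, take $f_1$ an $\varepsilon_1$-selection as above, and inductively, given a continuous $f_n$ with $\operatorname{dist}(f_n(x),\phi(x)) < \varepsilon_n$, form the cut-down carrier $\phi_n(x) = \overline{\phi(x) \cap B_{\varepsilon_n}(f_n(x))}$. This $\phi_n$ is nonempty-valued by the inductive bound, convex-valued since $\phi(x)$ and the open ball are convex, closed-valued by construction, and contained in $\phi(x)$ because $\phi(x)$ is closed; crucially it is again lower semicontinuous. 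Applying the approximate lemma to $\phi_n$ with parameter $\varepsilon_{n+1}$ yields a continuous $f_{n+1}$ with $\operatorname{dist}(f_{n+1}(x),\phi_n(x)) < \varepsilon_{n+1}$, whence $\operatorname{dist}(f_{n+1}(x),\phi(x)) < \varepsilon_{n+1}$ and $\norm{f_{n+1}(x) - f_n(x)} \leq \varepsilon_n + \varepsilon_{n+1}$ for all $x$, since $\phi_n(x) \subseteq \overline{B_{\varepsilon_n}(f_n(x))}$. The sequence $(f_n)$ is therefore uniformly Cauchy and converges uniformly to a continuous $f \co X \to Y$; because $\operatorname{dist}(f_n(x),\phi(x)) \to 0$ and $\phi(x)$ is closed, $f(x) \in \phi(x)$ for every $x$, so $f$ is the desired selection.

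The step I expect to be the main obstacle is the claim, used in the induction, that $x \mapsto \overline{\phi(x) \cap B_{\varepsilon_n}(f_n(x))}$ remains lower semicontinuous — this is the one genuinely non-formal point. One must verify the neighbourhood condition directly: given $x_0$, a point $y_0 \in \phi(x_0) \cap B_{\varepsilon_n}(f_n(x_0))$, and a neighbourhood $O_{y_0}$ of $y_0$, one first shrinks $O_{y_0}$ so that $O_{y_0} \subseteq B_{\varepsilon_n}(f_n(x_0))$, then uses lower semicontinuity of $\phi$ to find a neighbourhood of $x_0$ on which $\phi$ meets $O_{y_0}$, and finally uses continuity of $f_n$ to shrink that neighbourhood so that the relevant points of $\phi(x)$ still lie in $B_{\varepsilon_n}(f_n(x))$; a small additional argument passes to the closure without destroying lower semicontinuity. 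Everything else — partitions of unity, convex combinations, and the completeness of $Y$ — is routine.
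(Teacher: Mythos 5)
The paper does not actually prove this statement --- it is quoted verbatim from Michael's paper \cite[Thm.~3.2\textquotesingle\textquotesingle]{MichaelSelection} --- so there is no internal proof to compare against; your argument is the classical two-step proof (an $\varepsilon$-approximate selection built from a partition of unity subordinate to the cover by the open sets $O_y = \{x : \phi(x) \cap B_\varepsilon(y) \neq \varnothing\}$, followed by successive approximation applied to the cut-down carrier $x \mapsto \overline{\phi(x) \cap B_{\varepsilon_n}(f_n(x))}$), which is essentially Michael's own proof. You correctly isolate the only non-routine point, namely lower semicontinuity of the cut-down carrier (preserved under taking closures), and your sketch of that verification is sound, so the proposal is correct.
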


We note that for \cite[Thm.~3.2\textquotesingle\textquotesingle]{MichaelSelection}, Michael also proves that if $X$ is a $T_1$ space for which selections exist for all carriers as described above, then $X$ is paracompact. We will not need this implication, however.

To state the second theorem, we first need another definition. We denote the $n$-sphere by $\bbS^n$.

\begin{defn}
Let $n \in \bbN \cup \qty{0}$. A collection of subsets $\cS \subset \cP(Y) \setminus \qty{\varnothing}$ is \textdef{equi-LC$^n$} at $y \in \bigcup \cS$ if for every open neighborhood $O$ of $y$, there exists an open neighborhood $y \in O' \subset O$ such that for every $m \leq n$, every $S \in \cS$, and every continuous function $f:\bbS^m \rightarrow S \cap O'$, the composition $\iota \circ f$ is nulhomotopic, where $\iota:S \cap O' \rightarrow S \cap O$ is the inclusion. By convention, every collection of neighborhoods $\cS \subset \cP(Y) \setminus \qty{\varnothing}$ is said to be equi-$LC^{-1}$.
\end{defn}

The second theorem of Michael we need can then be stated as follows. It is sometimes known as the ``finite-dimensional selection theorem'' \cite{RepovsSemenovReview}. 
Note that dimension in this context is always meant in the topological sense, that is $\dim X$ for a Hausdorff space $X$  
 stands for the Lebesgue covering dimension.  
We refer the reader to \cite{EngelkingDimensionTheory} for a general reference on dimension theory.

\begin{thm}[{\cite[Thm.~1.2]{MichaelSelectionII}}]\label{thm:finite-dim_Michael}
Let $n \in \bbN \cup \qty{-1, 0}$. Let $X$ be paracompact Hausdorff and let $A \subset X$ be a closed subset such that $\dim B \leq n + 1$ for every subset $B \subset X \setminus A$ that is closed in $X$.
If $Y$ is a complete metric space, $\cS \subset \cP(Y)\setminus \qty{\varnothing}$ is an equi-$LC^n$ family of closed subsets of $Y$, and $\phi:X \rightarrow \cP(Y) \setminus \qty{\varnothing}$ is a lower semicontinuous carrier with $\phi(x) \in \cS$ for all $x \in X$, then every selection for $\phi|_A$ can be extended to a selection for $\phi|_O$ for some open set $O$ containing $A$.
\end{thm}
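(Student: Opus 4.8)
Since the final statement is Michael's finite-dimensional selection theorem, the paper simply cites \cite{MichaelSelectionII}; no argument is needed beyond that reference. For the record, here is the shape of Michael's own proof, which is the strategy I would follow. The plan is to realize the desired selection as a uniform limit of \emph{approximate selections}: call a continuous map $f$, defined on a subset of $X$ containing $A$, restricting to the given selection $g$ on $A$, and satisfying $d\bigl(f(x),\phi(x)\bigr)<\varepsilon$ throughout its domain, an \emph{$\varepsilon$-selection}. The crux is an \emph{improvement step}: from an $\varepsilon$-selection $f$ on a neighborhood of $A$ one manufactures a $\tfrac34\varepsilon$-selection $f'$ (any fixed ratio $<1$ would do) on a possibly smaller neighborhood of $A$, still equal to $g$ on $A$, with $\sup_x d\bigl(f(x),f'(x)\bigr)<\varepsilon$. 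Granting this, I would first produce a coarse approximate selection $f_1$ on some neighborhood $O_1\supseteq A$ by running the patching construction below with $g$ on $A$ as the only constraint, then iterate, obtaining $\varepsilon_k$-selections $f_k$ with $\varepsilon_k=(3/4)^k\varepsilon_1$ on a decreasing sequence $O_k$ of neighborhoods of $A$. Since $\sum_k \sup_x d\bigl(f_k(x),f_{k+1}(x)\bigr)\le\sum_k\varepsilon_k<\infty$, the sequence is uniformly Cauchy, so by completeness of $Y$ it converges uniformly on $O:=\bigcap_k O_k$ to a continuous $f$, with $O$ still containing $A$ provided one keeps the $O_k$ from collapsing to $A$. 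As each $\phi(x)$ is closed and $d\bigl(f_k(x),\phi(x)\bigr)\to0$, the limit satisfies $f(x)\in\phi(x)$; thus $f$ is a selection for $\phi|_O$ extending the one on $A$.

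All three hypotheses are consumed in the improvement step. From the $\varepsilon$-selection $f$, lower semicontinuity of $\phi$ lets me attach to each $x_0\in X\setminus A$ a point $p_{x_0}\in\phi(x_0)$ as close to $f(x_0)$ as desired, together with a neighborhood $W_{x_0}$ on which $f$ varies negligibly and $\phi(x)$ still meets a small ball about $p_{x_0}$; near $A$ one tracks $g$ instead. The dimension hypothesis $\dim B\le n+1$ for closed-in-$X$ subsets $B\subseteq X\setminus A$ lets me refine $\{W_{x_0}\}$ to a locally finite open cover $\{W_\alpha\}$ of $X\setminus A$ whose nerve $\mathcal N$ has dimension $\le n+1$, and a subordinate partition of unity gives a continuous map $\beta\colon X\setminus A\to|\mathcal N|$. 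I then propagate values of $\phi$ over the skeleta of $\mathcal N$: over a simplex $\sigma=\{\alpha_0,\dots,\alpha_k\}$, picking $x_\sigma\in\bigcap_i W_{\alpha_i}$, the set $\phi(x_\sigma)$ contains points $q_i$ hugging the $p_{\alpha_i}$, and I fill $\sigma$ inside $\phi(x_\sigma)$ starting from the $q_i$. Extending across an $m$-simplex, $1\le m\le n+1$, is exactly coning off its already-defined boundary $(m-1)$-sphere inside a member $S=\phi(x_\sigma)\in\cS$, which is what the equi-$LC^n$ property supplies for $m-1\le n$: a sphere in $S\cap O'$ bounds in $S\cap O$, and the \emph{equi} uniformity lets me prescribe, simultaneously over all simplices, how small $O'$ must be relative to a target ball $O$, keeping every filling diameter — hence the constants $\tfrac34$ and $<\varepsilon$ above — under control. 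Finally I set $f'$ equal to this map composed with $\beta$ off $A$ and to $g$ on $A$, and verify continuity and the estimates.

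The hard part — and the reason Michael's own proof runs several pages — is precisely the bookkeeping in this last construction. One must (i) choose the cover fine enough that the nerve genuinely has dimension $\le n+1$; (ii) reconcile, across simplices sharing a common face, values built inside a priori different members $\phi(x_\sigma),\phi(x_{\sigma'})$ of $\cS$, so that the global map is well defined and continuous — this forces an ordered induction over $\mathcal N$ rather than a naive one; and (iii) arrange geometric decay of the filling diameters so the iteration converges — all while never leaving the class of maps that restrict to $g$ on $A$ and are defined only on neighborhoods of $A$, which is exactly why the conclusion yields an extension over some open $O\supseteq A$ and not over all of $X$.
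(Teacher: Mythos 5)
You are right that the paper offers no proof of this statement: it is quoted verbatim from Michael's work and justified solely by the citation to \cite{MichaelSelectionII}, exactly as you say. Your accompanying sketch of Michael's successive-approximation argument (nerve of a fine cover of dimension $\leq n+1$, fillings of simplices supplied uniformly by the equi-$LC^n$ condition, and convergence via completeness of $Y$) is a faithful outline of the cited proof, so there is nothing to reconcile with the paper.
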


In \cite[Thm.~1.2]{MichaelSelectionII}, Michael also provides conditions under which one can take $O = X$. These conditions will not be satisfied in our application, but we will nevertheless be able to take $O = X$ by other methods, to be shown.

In our application of the finite-dimensional selection theorem, $Y$ will be the unitary group of a unital $C^*$-algebra $\fA$. 
In this case, $Y$ is also a topological group when endowed with the norm topology. 
The following proposition simplifies the task of proving the equi-$LC^n$ condition for such $Y$. 
We will use it in Theorem \ref{thm:equiLCn_pure_states}, where we show that a certain family $\cS$ of subsets of $Y = \Unitary(\fA)$ is equi-$LC^n$ at the identity $\1 \in \Unitary(\fA)$.

\begin{prop}\label{prop:equiLCn_topological_group}
Let $Y$ be a topological group and let $\cS$ be a nonempty collection of nonempty subsets of $Y$ such that 
\[
y \in Y \tn{ and }S \in \cS \,\, \Longrightarrow \,\, Sy^{-1} \in \cS,
\]
where $Sy^{-1} = \qty{xy^{-1}: x \in S}$. Then $\bigcup \cS = Y$ and, given $n \in \bbN \cup \qty{-1, 0}$, if $\cS$ is equi-$LC^n$ at a single $y_0 \in Y$, then $\cS$ is equi-$LC^n$.
\end{prop}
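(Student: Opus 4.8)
The plan is to establish the two claims separately, the first being essentially immediate from the hypothesis and the second following by a translation argument.

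\textbf{Step 1: $\bigcup \cS = Y$.} Since $\cS$ is nonempty, pick some $S_0 \in \cS$ and some $x_0 \in S_0$ (both nonempty). Given an arbitrary $y \in Y$, set $z = x_0^{-1} y \in Y$; then the hypothesis with the element $z^{-1}$ gives $S_0 z \in \cS$, wait—I need to be careful about the direction. The hypothesis says $S y^{-1} \in \cS$ for all $y \in Y$ and $S \in \cS$. Applying it with $S = S_0$ and the group element $x_0 y^{-1}$ (legitimate since $x_0 y^{-1} \in Y$), we get $S_0 (x_0 y^{-1})^{-1} = S_0 y x_0^{-1} \in \cS$, and this set contains $x_0 y x_0^{-1}$—not obviously $y$. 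A cleaner route: apply the hypothesis with $S = S_0$ and group element $x_0 \in Y$ to get $S_0 x_0^{-1} \in \cS$, a set containing $x_0 x_0^{-1} = \1$; thus $\1 \in \bigcup \cS$. Now apply the hypothesis again with this set and the group element $y^{-1} \in Y$: $(S_0 x_0^{-1}) (y^{-1})^{-1} = S_0 x_0^{-1} y \in \cS$, and this set contains $\1 \cdot y = y$. Hence $y \in \bigcup\cS$, proving $\bigcup \cS = Y$.

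\textbf{Step 2: equi-$LC^n$ at every point.} For $n = -1$ there is nothing to prove by convention, so assume $n \geq 0$. Suppose $\cS$ is equi-$LC^n$ at $y_0$, and let $y_1 \in Y$ be arbitrary. Let $O$ be an open neighborhood of $y_1$. Right translation $R_{g}: Y \to Y$, $x \mapsto xg$, is a homeomorphism for any fixed $g \in Y$ (continuity of the group operation and of inversion). Set $g = y_1^{-1} y_0$, so that $R_g(y_1) = y_1 y_1^{-1} y_0 = y_0$; then $R_g(O)$ is an open neighborhood of $y_0$. By equi-$LC^n$ at $y_0$, there is an open neighborhood $O''$ of $y_0$ with $O'' \subset R_g(O)$ such that for every $m \leq n$, every $S \in \cS$, and every continuous $f: \bbS^m \to S \cap O''$, the composite into $S \cap R_g(O)$ is nulhomotopic. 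Now put $O' = R_g^{-1}(O'') = R_{g^{-1}}(O'')$, an open neighborhood of $y_1$ contained in $O$. Given $m \leq n$, $S \in \cS$, and a continuous $f: \bbS^m \to S \cap O'$, note that $R_g$ restricts to a homeomorphism $S \cap O' \to (Sg) \cap O''$, and $Sg = S(y_1^{-1}y_0) \in \cS$ by the hypothesis applied with the group element $(y_1^{-1}y_0)^{-1} = y_0^{-1} y_1 \in Y$. So $R_g \circ f: \bbS^m \to (Sg) \cap O''$ is continuous, hence its composite with the inclusion $(Sg)\cap O'' \hookrightarrow (Sg) \cap R_g(O)$ is nulhomotopic; applying the homeomorphism $R_{g^{-1}}$ (which carries $(Sg)\cap R_g(O)$ to $S \cap O$ and $(Sg)\cap O''$ to $S \cap O'$, compatibly with inclusions) transports this nulhomotopy to show $\iota \circ f: \bbS^m \to S \cap O$ is nulhomotopic. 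Hence $\cS$ is equi-$LC^n$ at $y_1$.

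\textbf{Main obstacle.} This is a bookkeeping argument with no serious difficulty; the only thing to be careful about is the direction of multiplication—the hypothesis is stated for \emph{right} cosets $Sy^{-1}$, so I must use right translations throughout and track which group element to feed into the hypothesis at each step (e.g.\ to produce $Sg$ from $S$ one applies the hypothesis with the element $g^{-1}$). A secondary point is verifying that right translation is a homeomorphism of $Y$ and that it intertwines the relevant inclusions $S \cap O' \hookrightarrow S \cap O$ with $(Sg)\cap O'' \hookrightarrow (Sg) \cap R_g(O)$, but this is routine once one writes down that $R_g(S \cap U) = (Sg) \cap R_g(U)$ for any subset $U$, which follows from $R_g$ being a bijection.
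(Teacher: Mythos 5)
Your proof is correct and follows essentially the same route as the paper: translate a neighborhood of an arbitrary point to $y_0$ by right multiplication, use the closure hypothesis to see that translated members of $\cS$ remain in $\cS$, invoke equi-$LC^n$ at $y_0$, and transport the nulhomotopy back (the paper writes this translation of the homotopy explicitly, you phrase it via the homeomorphism $R_{g^{-1}}$, which is the same thing). Your Step 1 just fills in the detail the paper dismisses as straightforward, and does so correctly.
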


\begin{proof}
It is straightforward to see that $\bigcup \cS = Y$. Let $n \in \bbN \cup \qty{-1, 0}$ and suppose $\cS$ is equi-$LC^n$ at $y_0 \in Y$. We show $\cS$ is equi-$LC^n$ at an arbitrary $y \in Y$. If $n = -1$, then $\cS$ is trivially equi-$LC^n$, so suppose $n \geq 0$.

Let $O$ be a neighborhood of $y$. Then $O_0 = Oy^{-1}y_0$ is a neighborhood of $y_0$, so there exists a neighborhood $y_0 \subset O_0' \subset O_0$ such that for every $m \leq n$, every $S \in \cS$, and every continuous function $f_0:\bbS^m \rightarrow S \cap O_0'$, the composition $\iota_0 \circ f_0$ is nulhomotopic, where $\iota_0:S \cap O_0' \rightarrow S \cap O_0$ is the inclusion.

Let $O' = O_0'y_0^{-1}y$ and note that $O'$ is a neighborhood of $y$ contained in $O$. Given $m \leq n$, $S \in \cS$, and a continuous function $f:\bbS^m \rightarrow S \cap O'$, we define a new function $f_0:\bbS^m \rightarrow Sy^{-1}y_0 \cap O'_0$ by $f_0(x) = f(x)y^{-1}y_0$. Since $Sy^{-1}y_0 \in \cS$, we know there exists a homotopy $H_0:\bbS^m \times I \rightarrow Sy^{-1}y_0 \cap O_0$ such that $H_0(x, 0) = f_0(x)$ for all $x \in \bbS^m$ and $H_0(x, 1)$ is independent of $x$. Defining $H:\bbS^m \times I \rightarrow S \cap O$ by $H(x, t) = H_0(x,t)y_0^{-1}y$, we see that $H$ is a well-defined homotopy such that $H(x, 0) = f(x)$ for all $x \in \bbS^m$ and such that $H(x,1)$ is independent of $x$.
\end{proof}


\section{Continuous Families of Unitaries}
\label{sec:continuous_unitaries}

In this section we consider a unital $C^*$-algebra $\fA$ with a nonzero irreducible representation $(\cH, \pi)$ and show the existence of several useful continuous families of unitaries, characterized by their action on elements of $\cH$. The convex-valued selection theorem underlies the existence proofs in this section. 
The norm bounds highlighted in the following results are often helpful in verifying the lower semicontinuity condition on our carriers.

First we define a unitary 
which will be needed in the sequel. 

\begin{lem}\label{lem:U_xy}
Let $\hilbH$ be a Hilbert space. Define $\theta:\bbS \hilbH \times \bbS \hilbH \rightarrow [0,\pi]$ by
\begin{equation}\label{eq:theta_def}
\theta(\Psi,\Omega) = \cos^{-1}\qty(\Re \ev{\Psi,\Omega})
\end{equation}
Furthermore, given $\Psi, \Omega \in \bbS \hilbH$, let $\hilbH_{\Psi,\Omega} = \vecspan\qty{\Psi,\Omega}$ and define an operator $U_{\Psi,\Omega}:\hilbH_{\Psi,\Omega} \rightarrow \hilbH_{\Psi,\Omega}$ by
\begin{equation}\label{eq:U_xy}
U_{\Psi,\Omega} = \ev{\Omega,\Psi}\1 - \ketbra{\Psi}{\Omega} + \ketbra{\Omega}{\Psi}
\end{equation}
Then $U_{\Psi,\Omega}$ has the following properties:
\begin{enumerate}
	\item $U_{\Psi,\Omega}\Psi = \Omega$,
	\item $U_{\Psi,\Omega} \in \Unitary(\hilbH_{\Psi,\Omega})$,
	\item $\sigma(U_{\Psi,\Omega}) \subset \qty{e^{-i\theta(\Psi,\Omega)}, e^{i\theta(\Psi,\Omega)}}$,
	\item $\norm{\1 - U_{\Psi,\Omega}} = \norm{\Psi - \Omega}$.
\end{enumerate}
\end{lem}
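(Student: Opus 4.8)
The plan is to work entirely inside the finite-dimensional subspace $\hilbH_{\Psi,\Omega} = \vecspan\{\Psi,\Omega\}$, which has dimension $1$ or $2$, and to use that $U_{\Psi,\Omega}$ is a linear operator on this space, hence determined by its values on the spanning pair $\Psi,\Omega$. I would first note that the two rank-one terms in \eqref{eq:U_xy} have ranges contained in $\C\Psi$ and $\C\Omega$, so the formula genuinely defines an operator $\hilbH_{\Psi,\Omega}\to\hilbH_{\Psi,\Omega}$, and that $\Re\ev{\Psi,\Omega}\in[-1,1]$ by Cauchy--Schwarz, so $\theta(\Psi,\Omega)$ is well defined with $\cos\theta(\Psi,\Omega)=\Re\ev{\Psi,\Omega}$. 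Then I would record two computations. Property (a), $U_{\Psi,\Omega}\Psi=\Omega$, is immediate: the term $\ev{\Omega,\Psi}\1\Psi$ cancels $-\ketbra{\Psi}{\Omega}\Psi=-\ev{\Omega,\Psi}\Psi$, while $\ketbra{\Omega}{\Psi}\Psi=\Omega$. By the same expansion, $U_{\Psi,\Omega}\Omega=\bigl(\ev{\Omega,\Psi}+\ev{\Psi,\Omega}\bigr)\Omega-\Psi=2\cos\theta(\Psi,\Omega)\,\Omega-\Psi$.

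From these two formulas everything else follows, uniformly and without further case analysis. For (b), I would check that $U_{\Psi,\Omega}$ preserves the inner product; since $\{\Psi,\Omega\}$ spans $\hilbH_{\Psi,\Omega}$ and inner products are sesquilinear, it suffices to verify $\ev{U_{\Psi,\Omega}\xi,U_{\Psi,\Omega}\eta}=\ev{\xi,\eta}$ for $\xi,\eta$ ranging over $\{\Psi,\Omega\}$, which amounts to three short computations using the two formulas above together with $\ev{\Omega,\Psi}+\ev{\Psi,\Omega}=2\cos\theta(\Psi,\Omega)$. An inner-product-preserving linear map of a finite-dimensional Hilbert space is unitary, which gives (b).

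For (c), I would establish the operator identity $U_{\Psi,\Omega}^2-2\cos\theta(\Psi,\Omega)\,U_{\Psi,\Omega}+\1=0$ on $\hilbH_{\Psi,\Omega}$; once again, because $\{\Psi,\Omega\}$ spans, this reduces to evaluating the left-hand side on $\Psi$ and on $\Omega$ and simplifying via the two formulas. The polynomial $\mu^2-2\cos\theta(\Psi,\Omega)\mu+1$ has roots $\cos\theta(\Psi,\Omega)\pm i\sin\theta(\Psi,\Omega)$, that is, $e^{\pm i\theta(\Psi,\Omega)}$ (here $\sin\theta(\Psi,\Omega)\ge 0$ since $\theta(\Psi,\Omega)\in[0,\pi]$), so every $z\in\sigma(U_{\Psi,\Omega})$ must be one of these roots, which is (c).

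Finally, for (d), I would combine (b) and (c): multiplying the identity in (c) by $U_{\Psi,\Omega}^{-1}=U_{\Psi,\Omega}^*$ yields $U_{\Psi,\Omega}+U_{\Psi,\Omega}^*=2\cos\theta(\Psi,\Omega)\,\1$, hence $(\1-U_{\Psi,\Omega})^*(\1-U_{\Psi,\Omega})=2\bigl(1-\cos\theta(\Psi,\Omega)\bigr)\1$ and therefore $\norm{\1-U_{\Psi,\Omega}}^2=2\bigl(1-\cos\theta(\Psi,\Omega)\bigr)$. On the other hand $\norm{\Psi-\Omega}^2=2-2\Re\ev{\Psi,\Omega}=2\bigl(1-\cos\theta(\Psi,\Omega)\bigr)$, so (d) follows. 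I do not anticipate a genuine obstacle here; the lemma is elementary. The one point to watch is the degenerate case $\dim\hilbH_{\Psi,\Omega}=1$ (equivalently $\Omega\in\C\Psi$), and the reason I would organize the argument around the polynomial identity and the spanning pair $\{\Psi,\Omega\}$, rather than around an orthonormal basis of a presumed two-dimensional space, is exactly that this handles the degenerate case without any extra work.
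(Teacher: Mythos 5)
Your proof is correct, and it takes a genuinely different (and cleaner) route than the paper's. The paper's proof splits into the cases $\dim\hilbH_{\Psi,\Omega}=1$ and $\dim\hilbH_{\Psi,\Omega}=2$, introduces the orthonormal vector $\Phi(\Psi,\Omega)$ of \eqref{eq:w(x,y)}, writes the matrix of $U_{\Psi,\Omega}$ in the basis $\qty{\Phi(\Psi,\Omega),\Psi}$, checks $U_{\Psi,\Omega}^*U_{\Psi,\Omega}=\1$ by matrix computation, diagonalizes that matrix for (c), and gets (d) from the spectral theorem. You instead work with the possibly dependent spanning pair $\qty{\Psi,\Omega}$ and the two evaluation formulas $U_{\Psi,\Omega}\Psi=\Omega$ and $U_{\Psi,\Omega}\Omega=2\cos\theta(\Psi,\Omega)\,\Omega-\Psi$: checking preservation of the Gram data on the spanning pair gives (b) (an isometry of a finite-dimensional space is automatically surjective, hence unitary, as you implicitly use), the annihilating polynomial $U_{\Psi,\Omega}^2-2\cos\theta(\Psi,\Omega)\,U_{\Psi,\Omega}+\1=0$ gives (c) since every spectral point must be a root of it, and multiplying by $U_{\Psi,\Omega}^{-1}=U_{\Psi,\Omega}^*$ gives $(\1-U_{\Psi,\Omega})^*(\1-U_{\Psi,\Omega})=2\bigl(1-\cos\theta(\Psi,\Omega)\bigr)\1$, whence (d) by the $C^*$-identity and $\norm{\Psi-\Omega}^2=2-2\Re\ev{\Psi,\Omega}$; all of these computations check out. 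What your organization buys is uniformity: the degenerate one-dimensional case needs no separate treatment, and no orthonormalization, matrix, or explicit diagonalization is required. What the paper's more computational version buys is reusable explicit data: the vector $\Phi(\Psi,\Omega)$ and the concrete eigenvector/eigenvalue information are quoted again in Lemma \ref{lem:T_xy} and in the proof of Theorem \ref{thm:unitary_Kadison_lemma}, so the case-by-case proof doubles as setup for the sequel, whereas your argument would require deriving $\Phi(\Psi,\Omega)$ and the eigenvectors separately when they are needed later.
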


\begin{proof}
First observe that if $\dim \hilbH_{\Psi,\Omega} = 1$, then $\Omega = \ev{\Psi, \Omega} \Psi$ and $\1 = \ketbra{\Psi}{\Psi}$, hence
\[
U_{\Psi,\Omega} = \ev{\Omega,\Psi}\1 - \ev{\Omega,\Psi}\ketbra{\Psi}{\Psi} + \ev{\Psi,\Omega}\ketbra{\Psi}{\Psi} = \ev{\Psi,\Omega}\1.
\]
If $\dim \hilbH_{\Psi,\Omega} = 2$, then we define
\begin{equation}\label{eq:w(x,y)}
\Phi(\Psi,\Omega) = \frac{\Omega - \ev{\Psi,\Omega}\Psi}{\norm{\Omega - \ev{\Psi,\Omega}\Psi}} = \frac{\Omega - \ev{\Psi,\Omega}\Psi}{\sqrt{1 - \abs{\ev{\Psi,\Omega}}^2}}
\end{equation}
and note that $\qty{\Phi(\Psi,\Omega), \Psi}$ is an orthonormal basis for $\hilbH_{\Psi,\Omega}$.

(a) We compute
\[
U_{\Psi,\Omega}\Psi = \ev{\Omega,\Psi}\Psi  - \ev{\Omega,\Psi}\Psi + \Omega = \Omega,
\]
as desired.

(b) If $\dim \hilbH_{\Psi,\Omega} = 1$, then $U_{\Psi,\Omega}$ is multiplication by $\ev{\Psi,\Omega}$, and $\abs{\ev{\Psi,\Omega}} = 1$, so $U_{\Psi,\Omega}$ is unitary. If $\dim \hilbH_{\Psi,\Omega} = 2$, then the matrix of $U_{\Psi,\Omega}$ in the basis $\qty{\Phi(\Psi,\Omega), \Psi}$ is
\begin{equation}\label{eq:U_xy_matrix}
U_{\Psi,\Omega} = \mqty(\ev{\Omega,\Psi} & \sqrt{1 - \abs{\ev{\Psi,\Omega}}^2} \\ -\sqrt{1 - \abs{\ev{\Psi,\Omega}}^2} & \ev{\Psi,\Omega}),
\end{equation}
which can be shown to be unitary by verifying $U_{\Psi,\Omega}^*U_{\Psi,\Omega} = \1$.

(c) Observe that
\[
e^{\pm i \theta(\Psi,\Omega)} = \Re \ev{\Psi,\Omega} \pm i \sqrt{1 - \qty(\Re \ev{\Psi,\Omega})^2}.
\]
If $\dim \hilbH_{\Psi,\Omega} = 1$, then $\abs{\ev{\Psi,\Omega}} = 1$, hence 
\[
e^{\pm i \theta(\Psi,\Omega)} = \Re \ev{\Psi, \Omega} \pm i \abs{\Im \ev{\Psi,\Omega}}.
\]
Since $\sigma(U_{\Psi,\Omega}) = \qty{\ev{\Psi,\Omega}}$ when $\dim \hilbH_{\Psi,\Omega} = 1$, we have $\sigma(U_{\Psi,\Omega}) \subset \qty{e^{-i\theta(\Psi,\Omega)}, e^{i\theta(\Psi,\Omega)}}$. If $\dim \hilbH_{\Psi,\Omega} = 2$, then diagonalizing the matrix \eqref{eq:U_xy_matrix} yields the result.

(d) By the spectral theorem,
\begin{align*}
\norm{\1 - U_{\Psi,\Omega}} &= \abs{1 - e^{\pm i\theta(\Psi,\Omega)}} = \sqrt{2 - 2 \Re \ev{\Psi,\Omega}} = \norm{\Psi - \Omega}. \qedhere
\end{align*}
\end{proof}

The logarithm of $U_{\Psi,\Omega}$ will also be useful.

\begin{lem}\label{lem:T_xy}
Assume the hypotheses of Lemma \ref{lem:U_xy} and assume $\ev{\Psi,\Omega} \in (0,1)$. Define $V_{\Psi,\Omega} = U_{\Psi,\Omega} \oplus \1$ with respect to the decomposition $\cH = \cH_{\Psi,\Omega} \oplus \cH_{\Psi,\Omega}^\perp$ and define $T_{\Psi,\Omega} = -i\Log V_{\Psi,\Omega}$, where the principal branch of the logarithm is used. Then 
\[
T_{\Psi,\Omega} = i\theta(\Psi,\Omega) \ketbra{\Psi}{\Phi(\Psi,\Omega)} - i\theta(\Psi,\Omega) \ketbra{\Phi(\Psi,\Omega)}{\Psi}
\]
where $\theta(\Psi, \Omega)$ and $\Phi(\Psi,\Omega)$ are defined by \eqref{eq:theta_def} and \eqref{eq:w(x,y)}, respectively.
\end{lem}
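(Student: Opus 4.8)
The plan is to reduce everything to the two-dimensional subspace $\cH_{\Psi,\Omega}$, where $U_{\Psi,\Omega}$ is an explicit rotation matrix by Lemma \ref{lem:U_xy}, and then read off its principal logarithm. First I would observe that the hypothesis $\ev{\Psi,\Omega}\in(0,1)$ forces $\abs{\ev{\Psi,\Omega}}<1$, so $\Psi$ and $\Omega$ are linearly independent and $\dim\cH_{\Psi,\Omega}=2$; hence $\qty{\Phi(\Psi,\Omega),\Psi}$ is an orthonormal basis of $\cH_{\Psi,\Omega}$ and the matrix description \eqref{eq:U_xy_matrix} applies. Writing $\theta=\theta(\Psi,\Omega)\in(0,\pi/2)$, we have $\ev{\Psi,\Omega}=\ev{\Omega,\Psi}=\cos\theta$ (real and positive) and $\sqrt{1-\abs{\ev{\Psi,\Omega}}^2}=\sin\theta$, so in the basis $\qty{\Phi(\Psi,\Omega),\Psi}$ the matrix of $U_{\Psi,\Omega}$ is $\left(\begin{smallmatrix}\cos\theta & \sin\theta\\ -\sin\theta & \cos\theta\end{smallmatrix}\right)$, which is the rotation $\exp\qty(-\theta J)$ with $J=\left(\begin{smallmatrix}0 & -1\\ 1 & 0\end{smallmatrix}\right)$ (using $J^2=-\1$).

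Next I would pass to $V_{\Psi,\Omega}=U_{\Psi,\Omega}\oplus\1$ on $\cH=\cH_{\Psi,\Omega}\oplus\cH_{\Psi,\Omega}^{\perp}$ and take the principal logarithm. Since the holomorphic functional calculus is compatible with the orthogonal direct sum, $\Log V_{\Psi,\Omega}=\qty(\Log U_{\Psi,\Omega})\oplus 0$. By part (c) of Lemma \ref{lem:U_xy}, $\sigma(U_{\Psi,\Omega})=\qty{e^{\pm i\theta}}$ with $\theta\in(0,\pi/2)$, so the spectrum avoids $(-\infty,0]$ and the principal logarithm of the rotation $\exp(-\theta J)$ is exactly $-\theta J$. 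Therefore $T_{\Psi,\Omega}=-i\Log V_{\Psi,\Omega}$ acts as $-i(-\theta J)=i\theta J=\left(\begin{smallmatrix}0 & -i\theta\\ i\theta & 0\end{smallmatrix}\right)$ on $\cH_{\Psi,\Omega}$ in the basis $\qty{\Phi(\Psi,\Omega),\Psi}$ and as $0$ on $\cH_{\Psi,\Omega}^{\perp}$.

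Finally I would translate this matrix back into rank-one-operator notation: with $e_1=\Phi(\Psi,\Omega)$ and $e_2=\Psi$, a matrix $(M_{jk})$ represents the operator $\sum_{j,k}M_{jk}\ketbra{e_j}{e_k}$, so $\left(\begin{smallmatrix}0 & -i\theta\\ i\theta & 0\end{smallmatrix}\right)$ is $-i\theta\ketbra{\Phi(\Psi,\Omega)}{\Psi}+i\theta\ketbra{\Psi}{\Phi(\Psi,\Omega)}$, and since $T_{\Psi,\Omega}$ vanishes on $\cH_{\Psi,\Omega}^{\perp}$ this is precisely the asserted formula. As a consistency check one can verify directly that $\exp\qty(iT_{\Psi,\Omega})=V_{\Psi,\Omega}$ by squaring the skew-adjoint rank-two operator (which reproduces the orthogonal projection onto $\cH_{\Psi,\Omega}$ up to sign) and summing the exponential series, recovering \eqref{eq:U_xy}.

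There is no substantial obstacle here; the only points that require care are bookkeeping ones: confirming $\dim\cH_{\Psi,\Omega}=2$ so that the $2\times 2$ picture is valid, checking that the eigenvalues $e^{\pm i\theta}$ avoid the branch cut of $\Log$ — this is exactly where the hypothesis $\ev{\Psi,\Omega}\in(0,1)$, i.e. $\theta\in(0,\pi/2)$, is used — and matching the orientation of the rotation in \eqref{eq:U_xy_matrix} to the sign of $J$ in $\Log U_{\Psi,\Omega}$.
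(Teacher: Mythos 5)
Your proposal is correct and follows essentially the same route as the paper: restrict to the two-dimensional block $\cH_{\Psi,\Omega}$, use $\ev{\Psi,\Omega}\in(0,1)$ (i.e.\ $\theta\in(0,\pi/2)$) to ensure the spectrum $\qty{e^{\pm i\theta},1}$ avoids the branch cut, and identify the principal logarithm of the block. The only cosmetic difference is that you read off $\Log U_{\Psi,\Omega}=-\theta J$ from the rotation-matrix form in the basis $\qty{\Phi(\Psi,\Omega),\Psi}$, whereas the paper exhibits the eigenvectors $\Omega-e^{\mp i\theta}\Psi$ of $U_{\Psi,\Omega}$ and transports the eigenvalues $\pm\theta$ to $T_{\Psi,\Omega}$; both yield the stated formula.
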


\begin{proof}
Note that $T_{\Psi,\Omega}$ is well-defined since $\sigma(V_{\Psi,\Omega}) \subset \qty{e^{-i\theta(\Psi,\Omega)}, e^{i\theta(\Psi,\Omega)}, 1}$ and this does not include $-1$ since $\ev{\Psi,\Omega} \in (0,1)$. It is clear that $T_{\Psi,\Omega}$ acts as the zero operator on $\cH_{\Psi,\Omega}^\perp$. One can easily check that $v^{\pm}_{\Psi,\Omega} = \Omega - e^{\mp i \theta(\Psi,\Omega)}\Psi$ are eigenvectors of $U_{\Psi,\Omega}$ with eigenvalues $e^{\pm i \theta(\Psi,\Omega)}$. Thus, $v^{\pm}_{\Psi,\Omega}$ are eigenvectors of $T_{\Psi,\Omega}$ with eigenvalues $\pm \theta(\Psi,\Omega)$. It is then elementary linear algebra to show that $T_{\Psi,\Omega}\Psi = -i\theta(\Psi,\Omega)\Phi(\Psi,\Omega)$ and $T_{\Psi,\Omega}\Phi(\Psi,\Omega) = i\theta(\Psi,\Omega)\Psi$.
\end{proof}

The following lemma was proven in \cite{Spiegel} and is used in the proof of Theorem \ref{thm:unitary_Kadison_lemma}. We reproduce it below for reference.

\begin{lem}[{\cite[Lem.~3.7]{Spiegel}}]\label{lem:Stiefel}
Let $\hilbH$ be a Hilbert space and let $\Psi_1,\ldots, \Psi_n \in \hilbH$ be an orthonormal system. Given $\varepsilon > 0$, there exists $\delta > 0$ such that for any orthonormal system $\Omega_1,\ldots, \Omega_n \in \hilbH$ with $\norm{\Psi_i - \Omega_i} < \delta$ for all $i$, there exists a unitary $U \in \Unitary(\hilbH)$ such that 
\begin{enumerate}
\item $\cK_n = \vecspan\qty{\Psi_1,\ldots, \Psi_n, \Omega_1,\ldots, \Omega_n}$ is invariant under $U$, 
\item $U$ acts as the identity on $\cK^\perp_n$, 
\item $\norm{I - U} < \varepsilon$, 
\item and $U\Psi_i = \Omega_i$ for all $i$.
\end{enumerate}
\end{lem}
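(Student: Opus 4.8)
The plan is to induct on $n$, using as the sole building block the explicit unitaries $U_{\Psi,\Omega}$ of Lemma~\ref{lem:U_xy} --- whose essential feature for us is the norm identity $\norm{\1 - U_{\Psi,\Omega}} = \norm{\Psi - \Omega}$ --- and arranging the induction so that later corrections never disturb the vectors already matched. For $n = 1$ I would take $\delta = \varepsilon$ and $U = U_{\Psi_1,\Omega_1} \oplus \1$ with respect to the decomposition $\hilbH = \cK_1 \oplus \cK_1^\perp$, where $\cK_1 = \vecspan\{\Psi_1,\Omega_1\}$; Lemma~\ref{lem:U_xy} then supplies unitarity, $U\Psi_1 = \Omega_1$, $U|_{\cK_1^\perp} = \1$, and $\norm{\1 - U} = \norm{\Psi_1 - \Omega_1} < \varepsilon$.

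For the inductive step, assume the statement for $n-1$, for every Hilbert space and every orthonormal $(n-1)$-system. Fix $\Psi_1,\ldots,\Psi_n$ and $\varepsilon$, let $\delta_1$ be the constant the inductive hypothesis yields for the system $\Psi_2,\ldots,\Psi_n$ with tolerance $\varepsilon/4$, and set $\delta = \min\{\delta_1,\varepsilon/4\}$. Given an orthonormal system $\Omega_1,\ldots,\Omega_n$ with $\norm{\Psi_i - \Omega_i} < \delta$ for all $i$, I would first apply the inductive hypothesis to $\Psi_2,\ldots,\Psi_n$ and $\Omega_2,\ldots,\Omega_n$, obtaining a unitary $U''$ with $U''\Psi_j = \Omega_j$ for $j \ge 2$, with $\1 - U''$ supported on $\vecspan\{\Psi_2,\ldots,\Psi_n,\Omega_2,\ldots,\Omega_n\} \subseteq \cK_n$, and with $\norm{\1 - U''} < \varepsilon/4$. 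The crucial observation is that, $U''$ being unitary with $U''\Psi_j = \Omega_j$, we have $(U'')^*\Omega_j = \Psi_j$, whence
\[
\ev{\Omega_j, U''\Psi_1} = \ev{(U'')^*\Omega_j, \Psi_1} = \ev{\Psi_j, \Psi_1} = 0 \qquad (2 \le j \le n).
\]
Thus $U''\Psi_1$ and $\Omega_1$ are unit vectors both orthogonal to $\vecspan\{\Omega_2,\ldots,\Omega_n\}$, and $\norm{U''\Psi_1 - \Omega_1} \le \norm{\1 - U''} + \norm{\Psi_1 - \Omega_1} < \varepsilon/2$. I would then correct the last vector by $\tilde U = U_{U''\Psi_1,\Omega_1} \oplus \1$ with respect to $\vecspan\{U''\Psi_1,\Omega_1\}$ and its complement: by Lemma~\ref{lem:U_xy} this is unitary, fixes $\Omega_2,\ldots,\Omega_n$ and everything in $\cK_n^\perp$, sends $U''\Psi_1$ to $\Omega_1$, and satisfies $\norm{\1 - \tilde U} = \norm{U''\Psi_1 - \Omega_1} < \varepsilon/2$. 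Setting $U = \tilde U U''$, one verifies $U\Psi_1 = \Omega_1$ and $U\Psi_j = \Omega_j$ for $j \ge 2$; $\norm{\1 - U} \le \norm{\1 - \tilde U} + \norm{\1 - U''} < \varepsilon$; and, since $U''\Psi_1 = \Psi_1 + (U'' - \1)\Psi_1 \in \cK_n$, both $\1 - U''$ and $\1 - \tilde U$ are supported on $\cK_n$, hence so is $\1 - U$, which together with unitarity and $\dim \cK_n < \infty$ forces $U(\cK_n) = \cK_n$ and $U|_{\cK_n^\perp} = \1$.

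The step I expect to carry the actual content is the ordering of the induction together with the orthogonality computation above: the inductive hypothesis needs a fixed reference frame, so $\Psi_2,\ldots,\Psi_n$ must be matched to $\Omega_2,\ldots,\Omega_n$ first, and it is precisely the identity $(U'')^*\Omega_j = \Psi_j$ that guarantees the final single-vector correction $\tilde U$ preserves those matches. (Correcting $\Psi_1$ first would leave the residual frame $U''\Psi_2,\ldots,U''\Psi_n$ depending on $\Omega_1$, so the inductive hypothesis could not be invoked for it.) The remaining points --- that the produced $\delta$ depends only on $n$, the fixed frame, and $\varepsilon$, the constant-chasing in the $\varepsilon$-estimates, and the elementary support bookkeeping --- are routine.

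A less self-contained alternative would build $U$ in one stroke as $W_0 P + W(\1 - P)$, where $P$ is the orthogonal projection onto $\vecspan\{\Psi_1,\ldots,\Psi_n\}$, $W_0$ the isometry $\Psi_i \mapsto \Omega_i$ on its range, and $W$ the canonical rotation unitary carrying $\vecspan\{\Psi_1,\ldots,\Psi_n\}$ onto $\vecspan\{\Omega_1,\ldots,\Omega_n\}$, obtained by polar-decomposing $QP + (\1 - Q)(\1 - P)$ with $Q$ the projection onto $\vecspan\{\Omega_1,\ldots,\Omega_n\}$ (the positive part being $(\1-(P-Q)^2)^{1/2}$, which commutes with $P$ and $Q$, so that $W$ has range $\cK_n$, equals $\1$ on $\cK_n^\perp$, and carries $P$ to $Q$); the norm bound on $U$ would then reduce to the standard Lipschitz estimate for polar decomposition near the identity, applied to $\norm{P - Q} \le 2n\delta$.
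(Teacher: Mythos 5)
Your proposal is correct as it stands, but note that this paper does not actually prove the lemma: it is imported verbatim from \cite[Lem.~3.7]{Spiegel} and used as a black box, so there is no in-paper argument to compare yours against; what you have written is a self-contained substitute built on Lemma \ref{lem:U_xy}. Checking it on its own merits: the base case is exactly $U_{\Psi_1,\Omega_1}\oplus\1$, and in the inductive step the two points that carry the weight both hold — the orthogonality $\ev{\Omega_j, U''\Psi_1} = \ev{(U'')^*\Omega_j,\Psi_1} = \ev{\Psi_j,\Psi_1} = 0$ for $j\geq 2$, which guarantees the single-vector correction $\tilde U$ fixes the already-matched $\Omega_j$, and the inclusion $U''\Psi_1 \in \cK_n$, which keeps $\1-\tilde U$, and hence $\1-U = (\1-U'') + (\1-\tilde U)U''$, supported on $\cK_n$ so that invariance of $\cK_n$ and triviality on $\cK_n^\perp$ follow. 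The norm budget $\norm{\1-U} \leq \norm{\1-\tilde U} + \norm{\1-U''} < \varepsilon/2 + \varepsilon/4 < \varepsilon$ closes, Lemma \ref{lem:U_xy} covers the degenerate case where $U''\Psi_1$ and $\Omega_1$ are parallel, and your $\delta$ is chosen before the $\Omega_i$ are seen (indeed the recursion gives the explicit value $\delta = \varepsilon/4^{n-1}$, independent even of the frame, which is slightly stronger than required). Your closing alternative via the polar decomposition of $QP + (\1-Q)(\1-P)$ is the classical ``direct rotation'' intertwining two nearby projections and would also work, but as written it is only a sketch: one must still convert the bound on $\norm{P-Q}$ into a bound on $\norm{\1 - W}$ and combine it with the separate estimate on $\norm{(\1-W_0)P}$, so the inductive argument is the one to regard as your complete proof.
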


A version of the following lemma was also proven in \cite{Spiegel}, although the norm bounds were not pointed out. We provide the full proof for the convenience of the reader. 

\begin{thm}\label{thm:unitary_Kadison_lemma}
Let $\fA$ be a unital $C^*$-algebra and let $(\hilbH, \pi)$ be a nonzero irreducible representation of $\fA$. Define
\[
Y_+ = \qty{(\Psi,\Omega) \in \bbS \hilbH \times \bbS \hilbH: \ev{\Psi, \Omega} \in (0,1]},
\]
equipped with the subspace topology. There exists a continuous map $\cKU: Y_+ \rightarrow \Unitary(\fA)$ such that
\begin{equation}\label{eq:U_transitive_lemma}
\pi(\cKU(\Psi,\Omega))\Psi = \Omega \qqtext{and} \norm{\1 - \cKU(\Psi,\Omega)} = \norm{\Psi - \Omega}
\end{equation}
for all $(\Psi, \Omega) \in Y_+$.
\end{thm}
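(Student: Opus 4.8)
The plan is to construct $\cKU$ by applying Michael's convex-valued selection theorem to a carrier of self-adjoint elements of $\fA$ whose images under $\pi$ implement, on the two-dimensional subspace $\cH_{\Psi,\Omega}$, the rotation generator $T_{\Psi,\Omega}$ of Lemma~\ref{lem:T_xy}; a continuous selection is then exponentiated, and a spectral truncation upgrades the resulting norm estimate to the required equality. On the diagonal $\qty{(\Psi,\Omega) \in Y_+ : \ev{\Psi,\Omega} = 1}$ set $\cKU(\Psi,\Omega) = \1$; the main construction takes place on the open subset $Y_+' = \qty{(\Psi,\Omega) \in Y_+ : \ev{\Psi,\Omega} \in (0,1)}$, which is metrizable and hence paracompact Hausdorff. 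On $Y_+'$ the condition $\abs{\ev{\Psi,\Omega}} < 1$ keeps the denominator in \eqref{eq:w(x,y)} away from zero, so the maps $(\Psi,\Omega)\mapsto\theta(\Psi,\Omega)$, $(\Psi,\Omega)\mapsto\Phi(\Psi,\Omega)$, $(\Psi,\Omega)\mapsto T_{\Psi,\Omega}\Psi$, and $(\Psi,\Omega)\mapsto T_{\Psi,\Omega}\Phi(\Psi,\Omega)$ are all continuous.

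Next I would define a carrier $\phi$ on $Y_+'$ by
\[
\phi(\Psi,\Omega) = \qty{A \in \fA_{\mathrm{sa}} : \pi(A)\Psi = T_{\Psi,\Omega}\Psi \text{ and } \pi(A)\Phi(\Psi,\Omega) = T_{\Psi,\Omega}\Phi(\Psi,\Omega)},
\]
where $\fA_{\mathrm{sa}}$, the self-adjoint part of $\fA$, is regarded as a real Banach space. As the solution set of two continuous real-linear conditions, $A \mapsto \pi(A)\Psi$ and $A \mapsto \pi(A)\Phi(\Psi,\Omega)$, each $\phi(\Psi,\Omega)$ is a closed affine subspace of $\fA_{\mathrm{sa}}$, hence closed and convex; it is nonempty by the self-adjoint form of Kadison's transitivity theorem applied to the self-adjoint operator $T_{\Psi,\Omega}$ and the linearly independent vectors $\Psi$, $\Phi(\Psi,\Omega)$.

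The technical heart of the argument is lower semicontinuity of $\phi$. Given $A_0 \in \phi(\Psi_0,\Omega_0)$, the defect vectors $T_{\Psi,\Omega}\Psi - \pi(A_0)\Psi$ and $T_{\Psi,\Omega}\Phi(\Psi,\Omega) - \pi(A_0)\Phi(\Psi,\Omega)$ depend continuously on $(\Psi,\Omega)$ and vanish at $(\Psi_0,\Omega_0)$, hence are small for $(\Psi,\Omega)$ near $(\Psi_0,\Omega_0)$; one then corrects $A_0$ by a self-adjoint $D \in \fA$ of correspondingly small norm whose image $\pi(D)$ equals these two defect vectors on $\Psi$ and $\Phi(\Psi,\Omega)$, so that $A_0 + D \in \phi(\Psi,\Omega)$ lies within the prescribed neighborhood of $A_0$. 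The existence of such a $D$ with controlled norm is a norm-controlled form of Kadison's transitivity theorem, and this is precisely where Lemma~\ref{lem:Stiefel} enters: it supplies a small ambient unitary of $\cH$ bringing one configuration onto a nearby one, which feeds the quantitative transitivity argument. With lower semicontinuity in hand, Michael's convex-valued selection theorem yields a continuous selection $s : Y_+' \to \fA_{\mathrm{sa}}$, so $\pi(s(\Psi,\Omega))$ restricts to $T_{\Psi,\Omega}$ on $\cH_{\Psi,\Omega}$ for every $(\Psi,\Omega) \in Y_+'$.

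Finally, with $f_\theta(t) = \max\qty(-\theta, \min(\theta, t))$, set $\cKU(\Psi,\Omega) = \exp\qty(i\, f_{\theta(\Psi,\Omega)}(s(\Psi,\Omega)))$ on $Y_+'$; this is a norm-continuous function of $(\Psi,\Omega)$ since $(\theta, B)\mapsto f_\theta(B)$ and $B \mapsto \exp(iB)$ are norm-continuous. The subspace $\cH_{\Psi,\Omega}$ reduces the self-adjoint element $\pi(s(\Psi,\Omega))$, which acts there as $T_{\Psi,\Omega}$ with spectrum $\qty{\pm\theta(\Psi,\Omega)} \subseteq [-\theta(\Psi,\Omega), \theta(\Psi,\Omega)]$, where $f_{\theta(\Psi,\Omega)}$ is the identity; hence $\pi(f_{\theta(\Psi,\Omega)}(s(\Psi,\Omega)))$ again restricts to $T_{\Psi,\Omega}$ on $\cH_{\Psi,\Omega}$, and therefore $\pi(\cKU(\Psi,\Omega))$, since $\pi$ is a unital $*$-homomorphism, restricts to $\exp(i T_{\Psi,\Omega})|_{\cH_{\Psi,\Omega}} = U_{\Psi,\Omega}$, giving $\pi(\cKU(\Psi,\Omega))\Psi = U_{\Psi,\Omega}\Psi = \Omega$ by Lemma~\ref{lem:U_xy}(a). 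Moreover $\1 - \cKU(\Psi,\Omega)$ is normal, so its norm equals the supremum of $\abs{1 - e^{it}}$ over the spectrum of $f_{\theta(\Psi,\Omega)}(s(\Psi,\Omega))$; this spectrum lies in $[-\theta(\Psi,\Omega), \theta(\Psi,\Omega)]$ and contains $\theta(\Psi,\Omega)$, because $T_{\Psi,\Omega}$ has $\theta(\Psi,\Omega)$ as an eigenvalue, forcing $\theta(\Psi,\Omega) \in \sigma(\pi(s(\Psi,\Omega))) \subseteq \sigma(s(\Psi,\Omega))$, and $f_{\theta(\Psi,\Omega)}$ fixes it; since $t \mapsto \abs{1 - e^{it}}$ is increasing on $[0,\pi]$ and $\theta(\Psi,\Omega) < \pi$, the supremum is $\abs{1 - e^{i\theta(\Psi,\Omega)}} = \norm{\1 - U_{\Psi,\Omega}} = \norm{\Psi - \Omega}$ by Lemma~\ref{lem:U_xy}(c) and (d). This gives \eqref{eq:U_transitive_lemma} on $Y_+'$, and the norm identity forces $\cKU(\Psi,\Omega)\to\1$ as $(\Psi,\Omega)$ approaches the diagonal, so $\cKU$ extends continuously to all of $Y_+$ with value $\1$ on the diagonal. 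I expect the production of the small self-adjoint correction $D$ — equivalently, lower semicontinuity of $\phi$ — to be the main obstacle, and this is exactly where Lemma~\ref{lem:Stiefel} is used.
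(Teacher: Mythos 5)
Your construction is correct and follows the paper's broad strategy (a carrier of self-adjoint lifts of $T_{\Psi,\Omega}$, Michael's convex-valued selection theorem, then exponentiation), but you handle the norm constraint differently. The paper builds the bound $\norm{A}\leq\theta(\Psi,\Omega)$ into the carrier and works on all of $Y_+$ at once; the price is that lower semicontinuity cannot be proved by simply adding a small correction (which could violate the norm constraint), so the paper instead conjugates $A_0$ by a small unitary $W=e^{iB}$ -- obtained from Lemma \ref{lem:Stiefel} together with the norm-controlled Kadison transitivity theorem -- carrying $\Psi_0,\Phi(\Psi_0,\Omega_0)$ to $\Psi,\Phi(\Psi,\Omega)$, and then rescales by $\theta(\Psi,\Omega)/\theta(\Psi_0,\Omega_0)$. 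The payoff is that the selection automatically satisfies $\norm{A(\Psi,\Omega)}\leq\theta(\Psi,\Omega)$, so no truncation is needed and continuity across the diagonal comes for free. You instead drop the norm bound from the carrier (so each value is a closed affine subspace and lower semicontinuity reduces to a small additive correction), recover the norm control afterwards via the cut-off $f_{\theta(\Psi,\Omega)}$, and extend over the diagonal by hand using the resulting identity $\norm{\1-\cU(\Psi,\Omega)}=\norm{\Psi-\Omega}$; this is a viable variant, and your spectral-mapping argument for that equality works (the paper gets the lower bound more simply from $\norm{\1-\cU(\Psi,\Omega)}\geq\norm{\Psi-\pi(\cU(\Psi,\Omega))\Psi}$).

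One attribution should be corrected: in your defect-correction proof of lower semicontinuity, Lemma \ref{lem:Stiefel} is not the natural tool. What you need is a lemma producing a self-adjoint operator of small norm with prescribed small values $d_1=T_{\Psi,\Omega}\Psi-\pi(A_0)\Psi$ and $d_2=T_{\Psi,\Omega}\Phi(\Psi,\Omega)-\pi(A_0)\Phi(\Psi,\Omega)$ on the orthonormal pair $\Psi,\Phi(\Psi,\Omega)$ -- the required symmetry $\ev{\Phi(\Psi,\Omega),d_1}=\ev{d_2,\Psi}$ holds automatically because $T_{\Psi,\Omega}$ and $\pi(A_0)$ are self-adjoint -- for instance \cite[Lem.~5.4.2]{KadisonRingroseI}, followed by the Kadison transitivity theorem with norm bounds; this is exactly the argument the paper runs in the proof of Theorem \ref{thm:weak*_selection_theorem}. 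Lemma \ref{lem:Stiefel} (a small ambient unitary moving one orthonormal configuration to a nearby one) is what the paper's own norm-constrained lower-semicontinuity argument requires. With that substitution your outline closes.
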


\begin{proof}
We retain the notation from Lemma \ref{lem:U_xy}, Lemma \ref{lem:T_xy}, their proofs. Define a carrier $\phi:Y_+ \rightarrow \cP(\fA_\tn{sa})\setminus \qty{\varnothing}$ by
\[
\phi(\Psi,\Omega) = \qty{A \in \fA_\tn{sa}: \pi(A)P_{\Psi,\Omega} = T_{\Psi,\Omega}P_{\Psi,\Omega} \tn{ and } \norm{A} \leq \theta(\Psi,\Omega)}.
\]
By the Kadison transitivity theorem with norm bounds (see \cite[Thm.~3.4]{Spiegel} or \cite[Thm.~2.7.5]{PedersenCAlgAutomorphisms}), $\phi(\Psi,\Omega)$ is nonempty for all $(\Psi,\Omega) \in Y_+$, so $\phi$ is well-defined. It is easy to see that $\phi(\Psi,\Omega)$ is closed and convex for all $(\Psi,\Omega) \in Y_+$. Since $Y_+$ is metrizable, hence paracompact Hausdorff, we may apply the convex-valued selection theorem if we can show that $\phi$ is lower semicontinuous. 

Fix $(\Psi_0, \Omega_0) \in Y_+$, $A_0 \in \phi(\Psi_0, \Omega_0)$, and $\varepsilon > 0$. Suppose $\ev{\Psi_0, \Omega_0} = 1$. Then $\theta(\Psi_0, \Omega_0) = 0$, hence $A_0 = 0$. By continuity of $\theta$ on $Y_+$, we may choose a neighborhood $O$ of $(\Psi_0, \Omega_0)$ such that $(\Psi,\Omega) \in O$ implies $\theta(\Psi,\Omega) < \varepsilon$. Then for every $(\Psi,\Omega) \in O$ we have $\phi(\Psi,\Omega) \subset B_\varepsilon(A_0)$, so that in particular $\phi(\Psi,\Omega) \cap B_\varepsilon(A_0) \neq \varnothing$, as desired.

Suppose $\ev{\Psi_0, \Omega_0} < 1$. In this case $\theta(\Psi_0, \Omega_0)$ and $A_0$ are nonzero, the latter because $T_{\Psi_0,\Omega_0}P_{\Psi_0,\Omega_0}$ is nonzero. Using continuity of $\theta$ on $Y_+$, we can choose a neighborhood $O$ of $(\Psi_0, \Omega_0)$ such that for all $(\Psi,\Omega) \in O$ we have $\ev{\Psi,\Omega} < 1$ and 
\[
\abs{1 - \frac{\theta(\Psi,\Omega)}{\theta(\Psi_0, \Omega_0)}} < \frac{\varepsilon}{2\norm{A_0}}
\]
Applying Lemma \ref{lem:Stiefel} to the orthonormal system $\Psi_0, \Phi(\Psi_0, \Omega_0)$ and the number
\[
\varepsilon' = \min\qty(2, \frac{\varepsilon}{4\norm{A_0}}),
\]
we may find a $\delta > 0$ with the properties described in Lemma \ref{lem:Stiefel}. Note that $\Phi(\Psi,\Omega)$ is well-defined and continuous on $O$ since $\ev{\Psi,\Omega} < 1$ for $(\Psi,\Omega) \in O$. By continuity of $\Phi$, we may shrink $O$ such that for all $(\Psi,\Omega) \in O$, we have $\norm{\Psi - \Psi_0} < \delta$ and $\norm{\Phi(\Psi, \Omega) - \Phi(\Psi_0, \Omega_0)} < \delta$.

Now, given $(\Psi, \Omega) \in O$, there exists a unitary $V \in \Unitary(\hilbH)$ with the properties in Lemma \ref{lem:Stiefel}, in particular $V\Psi_0 = \Psi$ and $V\Phi(\Psi_0, \Omega_0) = \Phi(\Psi, \Omega)$, and $\norm{\1 - V} < \varepsilon'$. That $\norm{\1 - V} < 2$ implies that we can use the continuous functional calculus with the principal branch of the logarithm to define $S = -i\Log V \in \B(\hilbH)_\tn{sa}$, and $\norm{S} < \pi$. Note that $S$ leaves $\cK = \vecspan\qty{\Psi_0, \Phi(\Psi_0, \Omega_0), \Psi, \Phi(\Psi,\Omega)}$ invariant since $V$ and $V^*$ leave $\cK$ invariant. By the Kadison transitivity theorem with norm bounds, we obtain a self-adjoint operator $B \in \fA_\tn{sa}$ such that $\pi(B)|_{\cK} = S|_{\cK}$ and $\norm{B} \leq \norm{S}$. Hence, $W = e^{iB}$ acts as $V = e^{iS}$ on $\cK$ and by continuous functional calculus we know
\[
\norm{\1 - W} = \abs{1 - e^{i \norm{B}}} \leq \abs{1 - e^{i\norm{S}}} = \norm{\1 - V}.
\]
It is easy to check that
\[
A \defeq \frac{\theta(\Psi,\Omega)}{\theta(\Psi_0, \Omega_0)}WA_0W^{-1} \in \phi(\Psi,\Omega)
\]
using the values of $V$ and $T_{\Psi_0,\Omega_0}$ on $\Psi_0, \Phi(\Psi_0,\Omega_0)$ and the values of $V^{-1}$ and $T_{\Psi,\Omega}$ on $\Psi, \Phi(\Psi, \Omega)$. Finally, observe that
\begin{align*}
\norm{A_0 - A} &\leq \norm{A_0 - WA_0W^{-1}} + \abs{1 - \frac{\theta(\Psi,\Omega)}{\theta(\Psi_0, \Omega_0)}}\norm{A_0}\\
&\leq \qty(2 \norm{\1 - W} + \abs{1 - \frac{\theta(\Psi,\Omega)}{\theta(\Psi_0, \Omega_0)}})\norm{A_0} < \varepsilon,
\end{align*}
as desired. This proves lower semicontinuity of $\phi$.

By the convex-valued selection theorem we obtain a continuous selection $A:Y_+ \rightarrow \fA_\tn{sa}$ of $\phi$. Defining $\cU:Y_+ \rightarrow \Unitary(\fA)$ by $\cU(\Psi,\Omega) = e^{iA(\Psi,\Omega)}$, we see that $\cU$ is continuous,
\[
\pi(\cU(\Psi,\Omega)) = e^{i\pi(A(\Psi,\Omega))}\Psi = e^{iT_{\Psi,\Omega}}\Psi = \Omega,
\]
and
\[
\norm{\1 - \cU(\Psi,\Omega)} = \abs{1 - e^{i\norm{A(\Psi,\Omega)}}} \leq \abs{1 - e^{i\theta(\Psi,\Omega)}} = \norm{\Psi - \Omega}.
\]
for all $(\Psi,\Omega) \in Y_+$. We note that
\[
\norm{\1 - \cU(\Psi,\Omega)} \geq \norm{1 - \pi(\cU(\Psi,\Omega))} \geq \norm{\Psi - \pi(\cU(\Psi,\Omega))\Psi} = \norm{\Psi - \Omega},
\]
so equality holds in \eqref{eq:U_transitive_lemma}.
\end{proof}

The following corollary lifts the restriction that the inner product $\ev{\Psi, \Omega}$ be positive at the cost of a weaker norm bound.

\begin{cor}\label{cor:U_extension}
Let $\fA$ be a unital $C^*$-algebra and let $(\hilbH, \pi)$ be a nonzero irreducible representation of $\fA$. Define
\[
Y = \qty{(\Psi,\Omega) \in \bbS \cH \times \bbS \cH: \ev{\Psi,\Omega} \neq 0}
\]
For any continuous map $\cU:Y_+ \rightarrow \Unitary(\fA)$ satisfying \eqref{eq:U_transitive_lemma} for all $(\Psi,\Omega) \in Y_+$, there exists an extension of $\cU$ to a continuous map $\cU:Y \rightarrow \Unitary(\fA)$ such that
\begin{equation}\label{eq:U_extension}
\pi(\cU(\Psi,\Omega))\Psi = \Omega \qqtext{and} \norm{\1 - \cU(\Psi,\Omega)} \leq \norm{\Psi - \Omega} + \abs{1 - \frac{\ev{\Psi,\Omega}}{\abs{\ev{\Psi,\Omega}}}}
\end{equation}
for all $(\Psi,\Omega) \in Y$. In particular, there exists such a map $\cU:Y \rightarrow \Unitary(\fA)$.
\end{cor}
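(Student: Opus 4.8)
The plan is to reduce to Theorem~\ref{thm:unitary_Kadison_lemma} by first rotating away the phase of $\ev{\Psi,\Omega}$. For $(\Psi,\Omega)\in Y$ put $\lambda = \lambda(\Psi,\Omega)\defeq\ev{\Psi,\Omega}/\abs{\ev{\Psi,\Omega}}$ and $\Omega' = \Omega'(\Psi,\Omega)\defeq\overline{\lambda}\,\Omega\in\bbS\hilbH$; these depend continuously on $(\Psi,\Omega)$, one computes $\ev{\Psi,\Omega'} = \overline{\lambda}\,\ev{\Psi,\Omega} = \abs{\ev{\Psi,\Omega}}\in(0,1]$ so that $(\Psi,\Omega')\in Y_+$, and on $Y_+$ one has $\lambda\equiv1$ and $\Omega'=\Omega$. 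Granting for the moment a continuous map $\cKW : Y\to\Unitary(\fA)$ with $\cKW(\Psi,\Omega)=\1$ whenever $\lambda(\Psi,\Omega)=1$, with $\pi(\cKW(\Psi,\Omega))\,\Omega = \lambda(\Psi,\Omega)\,\Omega$, and with $\norm{\1-\cKW(\Psi,\Omega)}\le\abs{1-\lambda(\Psi,\Omega)}$, set $\cU(\Psi,\Omega)\defeq\cKW(\Psi,\Omega)\,\cU(\Psi,\Omega')$. This is continuous, restricts to the given map on $Y_+$, and
\[
\pi(\cU(\Psi,\Omega))\,\Psi = \pi(\cKW(\Psi,\Omega))\,\Omega' = \overline{\lambda}\,\pi(\cKW(\Psi,\Omega))\,\Omega = \Omega,
\]
while $\norm{\1-\cU(\Psi,\Omega)}\le\norm{\1-\cKW(\Psi,\Omega)}+\norm{\1-\cU(\Psi,\Omega')} = \norm{\1-\cKW(\Psi,\Omega)}+\norm{\Psi-\Omega'}$ by \eqref{eq:U_transitive_lemma}. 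Since $\norm{\Psi-\Omega'}^2 = 2-2\abs{\ev{\Psi,\Omega}}\le 2-2\Re\ev{\Psi,\Omega} = \norm{\Psi-\Omega}^2$, the bound \eqref{eq:U_extension} follows, and the final assertion of the corollary is then immediate from Theorem~\ref{thm:unitary_Kadison_lemma}.

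It remains to build $\cKW$. On the open set $Y_1 = \qty{(\Psi,\Omega)\in Y : \ev{\Psi,\Omega}\notin(-\infty,0]}$ the argument $\alpha(\Psi,\Omega)\in(-\pi,\pi)$ of $\lambda(\Psi,\Omega)$ is continuous, and I would apply the convex-valued selection theorem to the carrier $(\Psi,\Omega)\mapsto\qty{B\in\fA_\tn{sa} : \pi(B)\,\Omega = \alpha(\Psi,\Omega)\,\Omega,\ \norm{B}\le\abs{\alpha(\Psi,\Omega)}}$. Its values are closed and convex and are nonempty by the Kadison transitivity theorem with norm bounds; lower semicontinuity follows as in Theorem~\ref{thm:unitary_Kadison_lemma}, transporting a given admissible $B$ at $(\Psi_0,\Omega_0)$ by a Stiefel-type unitary (Lemma~\ref{lem:Stiefel}) and rescaling by $\alpha(\Psi,\Omega)/\alpha(\Psi_0,\Omega_0)$. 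A selection $B$ gives $\cKW = e^{iB}$ on $Y_1$, with $\norm{\1-e^{iB}} = \abs{1-e^{i\norm{B}}}\le\abs{1-e^{i\abs{\alpha}}} = \abs{1-\lambda}$ since $\abs{\alpha}\le\pi$, and $\cKW=\1$ on the locus $\lambda=1$.

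The main obstacle is extending $\cKW$ across $Y\setminus Y_1 = \qty{(\Psi,\Omega)\in Y : \ev{\Psi,\Omega}\in(-\infty,0)}$, the locus $\lambda=-1$: no continuous argument of $\lambda$ exists near this set — indeed $\lambda\colon Y\to\qty{z\in\bbC:\abs{z} = 1}$ is not nullhomotopic — so the selection $B$ genuinely fails to extend, and the weakening of the norm bound in \eqref{eq:U_extension} relative to \eqref{eq:U_transitive_lemma} is unavoidable. What rescues the argument is that the computation in the first paragraph in fact only needs $\norm{\1-\cKW(\Psi,\Omega)}\le\abs{1-\lambda(\Psi,\Omega)} + \bigl(\norm{\Psi-\Omega}-\norm{\Psi-\Omega'}\bigr)$, and off $Y_+$ the bracketed slack is strictly positive. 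I would therefore take the closed set $G = \qty{(\Psi,\Omega)\in Y : \Re\ev{\Psi,\Omega}\ge-\tfrac12\abs{\ev{\Psi,\Omega}}}\subset Y_1$, on which $\cKW$ is already defined and satisfies this relaxed bound, and apply Michael's finite-dimensional selection theorem (Theorem~\ref{thm:finite-dim_Michael}) with $X = Y$ and $A = G$ to the carrier
\[
(\Psi,\Omega)\longmapsto\qty{V\in\Unitary(\fA) : \pi(V)\,\Omega = \lambda(\Psi,\Omega)\,\Omega,\ \norm{\1-V}\le\abs{1-\lambda(\Psi,\Omega)}+\norm{\Psi-\Omega}-\norm{\Psi-\Omega'}}
\]
to obtain the desired extension $\cKW$ on all of $Y$. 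Its values are closed, it is lower semicontinuous (the norm bound being non-binding for the transported elements, since the slack is strictly positive off $Y_+$), and each value is, up to left translation by a fixed unitary, a norm ball about $\1$ in the closed subgroup $\qty{U\in\Unitary(\fA):\pi(U)\,\Omega=\Omega}$. The one remaining hypothesis — and the technical heart of this step — is that this family of subsets of $\Unitary(\fA)$ is equi-$LC^n$ for the relevant $n$, which one expects to follow, when $\hilbH$ is infinite dimensional, from weak contractibility of those subgroups. Finally, one upgrades the open neighborhood $O\supseteq G$ furnished by Theorem~\ref{thm:finite-dim_Michael} to $O=Y$ by the method announced after Theorem~\ref{thm:finite-dim_Michael}.
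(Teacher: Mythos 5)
Your first paragraph is a correct reduction: writing $\lambda = \ev{\Psi,\Omega}/\abs{\ev{\Psi,\Omega}}$ and $\Omega' = \overline{\lambda}\,\Omega$, any continuous $\cKW\colon Y\to\Unitary(\fA)$ with $\cKW=\1$ on the locus $\{\lambda=1\}$, with $\pi(\cKW(\Psi,\Omega))\Omega=\lambda\Omega$, and with $\norm{\1-\cKW(\Psi,\Omega)}\le\abs{1-\lambda}$ does yield the corollary, using $\norm{\Psi-\Omega'}\le\norm{\Psi-\Omega}$. But you then overlook that such a $\cKW$ exists trivially: since $\fA$ is unital, the scalar unitary $\cKW(\Psi,\Omega)=\lambda(\Psi,\Omega)\1$ has all three properties ($\pi(\lambda\1)\Omega=\lambda\Omega$, $\norm{\1-\lambda\1}=\abs{1-\lambda}$, and continuity because $\lambda$ is continuous on $Y$), and with this choice your formula collapses to exactly the paper's one-line proof, $\cU(\Psi,\Omega)=\lambda\,\cU\bigl(\Psi,\overline{\lambda}\,\Omega\bigr)$. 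The ``obstruction'' you locate at $\lambda=-1$ is an artifact of insisting that $\cKW=e^{iB}$ with a continuously chosen self-adjoint $B$; no continuous argument of $\lambda$ is needed because $\lambda\1$ itself is already continuous, and the weaker bound in \eqref{eq:U_extension} is simply the price of the extra factor $\lambda\1$, not the trace of a topological obstruction that your machinery must circumvent.

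As written, the second half of your argument is a genuine gap, not merely an inefficiency. The proposed application of Theorem \ref{thm:finite-dim_Michael} with $X=Y$ and $A=G$ cannot go through: that theorem requires every subset of $X\setminus A$ that is closed in $X$ to have covering dimension at most $n+1$ for the same fixed $n$ for which the family of values is equi-$LC^n$, whereas $Y\setminus G$ contains closed infinite-dimensional pieces whenever $\cH$ is infinite dimensional (the case of interest); the equi-$LC^n$ property of your carrier's values is only conjectured (``one expects''), not proved; and even granting both, the theorem only furnishes a selection on some open $O\supseteq G$, and your final ``upgrade to $O=Y$'' is asserted without argument (the paper's remark about obtaining $O=X$ ``by other methods'' refers to its later, different application, not to this situation). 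So the proposal as submitted does not prove the corollary; it is rescued only by discarding the entire selection-theoretic construction of $\cKW$ in favor of the scalar $\lambda\1$.
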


\begin{proof}
For $(\Psi,\Omega) \in Y$, we note that
\[
\qty(\Psi, \frac{\abs{\ev{\Psi,\Omega}}}{\ev{\Psi,\Omega}} \Omega) \in Y_+,
\]
so we can define $\cU:Y \rightarrow \Unitary(\fA)$ by
\[
\cU(\Psi,\Omega) = \frac{\ev{\Psi,\Omega}}{\abs{\ev{\Psi,\Omega}}}\cU\qty(\Psi,\frac{\abs{\ev{\Psi,\Omega}}}{\ev{\Psi,\Omega}} \Omega).
\]
It is straightforward to verify that this extends $\cU$ and satisfies \eqref{eq:U_extension}.
\end{proof}

Corollary \ref{cor:U_extension} can be used to prove the existence of a number of useful fiber bundles. The one we need here is given in the corollary below. Related results are given in \cite[\S3.2]{Spiegel}.

\begin{cor}\label{cor:principal_bundle}
Let $\fA$ be a unital $C^*$-algebra and let $(\cH, \pi)$ be a nonzero irreducible representation of $\fA$. Given $\Psi \in \bbS\cH$, the map
\[
\Unitary(\fA) \rightarrow \bbS \cH, \quad U \mapsto \pi(U)\Psi
\]
is a principal $\Unitary(\fA)_{\Psi}$-bundle, where
\[
U(\fA)_{\Psi} = \qty{U \in \Unitary(\fA): \pi(U)\Psi = \Psi}.
\]
\end{cor}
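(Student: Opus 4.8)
The plan is to produce continuous local sections of the orbit map $p\co \Unitary(\fA) \to \bbS\cH$, $U \mapsto \pi(U)\Psi$, and then manufacture the local trivializations in the standard way from these sections together with the right-translation action of $G \defeq \Unitary(\fA)_\Psi$ on $\Unitary(\fA)$.

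First I would dispatch the structural facts that do not involve local triviality. The stabilizer $G = \qty{U \in \Unitary(\fA) : \pi(U)\Psi = \Psi}$ is the preimage of $\qty{\Psi}$ under the norm-continuous map $U \mapsto \pi(U)\Psi$, hence a closed subgroup of the topological group $\Unitary(\fA)$, and right multiplication $\Unitary(\fA) \times G \to \Unitary(\fA)$ is a free continuous action. Its orbits are exactly the fibers of $p$: for $U, V \in \Unitary(\fA)$ one has $p(U) = p(V)$ iff $\pi(V^{-1}U)\Psi = \Psi$ iff $U \in VG$. Surjectivity of $p$ follows from Corollary \ref{cor:U_extension}: given $\Omega \in \bbS\cH$, choose $\Xi \in \bbS\cH$ with $\ev{\Psi,\Xi} \neq 0 \neq \ev{\Xi,\Omega}$ and set $U = \cU(\Xi,\Omega)\,\cU(\Psi,\Xi)$, so that $\pi(U)\Psi = \Omega$.

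Next comes the one genuinely substantive step, the construction of continuous local sections through an arbitrary point $\Omega_0 \in \bbS\cH$ — including points with $\ev{\Psi,\Omega_0} = 0$, where $\cU(\Psi,-)$ is not directly available. Fix $\Omega_0$, use surjectivity to pick $U_0 \in \Unitary(\fA)$ with $\pi(U_0)\Psi = \Omega_0$, and let $N_{\Omega_0} = \qty{\Omega \in \bbS\cH : \ev{\Omega_0,\Omega} \neq 0}$, an open neighborhood of $\Omega_0$. Define $s_{\Omega_0}\co N_{\Omega_0} \to \Unitary(\fA)$ by $s_{\Omega_0}(\Omega) = \cU(\Omega_0,\Omega)\,U_0$, where $\cU\co Y \to \Unitary(\fA)$ is the map from Corollary \ref{cor:U_extension}. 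Since $\Omega \mapsto (\Omega_0,\Omega)$ is continuous into $Y$, the map $s_{\Omega_0}$ is continuous, and $\pi(s_{\Omega_0}(\Omega))\Psi = \pi(\cU(\Omega_0,\Omega))\,\Omega_0 = \Omega$, so $s_{\Omega_0}$ is a section of $p$. The sets $N_{\Omega_0}$ cover $\bbS\cH$.

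Finally I would assemble the bundle charts. Over $N_{\Omega_0}$ define $\Phi_{\Omega_0}\co p^{-1}(N_{\Omega_0}) \to N_{\Omega_0} \times G$ by $\Phi_{\Omega_0}(U) = \qty\big(p(U),\, s_{\Omega_0}(p(U))^{-1}U)$; the second coordinate lies in $G$ because $\pi(s_{\Omega_0}(p(U)))\Psi = p(U) = \pi(U)\Psi$. The inverse is $(\Omega, g) \mapsto s_{\Omega_0}(\Omega)\,g$, which is continuous; the two maps are mutually inverse, commute with the projections onto $N_{\Omega_0}$, and intertwine the $G$-actions given by right multiplication. This exhibits $p$ as a locally trivial principal $G$-bundle. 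The only point that takes any thought is the local section near a point $\Omega_0$ orthogonal to $\Psi$; once one observes that translating the ``based at $\Omega_0$'' family $\cU(\Omega_0,-)$ by a fixed $U_0$ in the fiber over $\Omega_0$ solves this, the remainder is the routine verification that a fiber-preserving free action admitting continuous local sections is a principal bundle.
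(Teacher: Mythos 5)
Your proposal is correct and takes essentially the same route as the paper: the substantive step in both is the continuous local section furnished by Corollary \ref{cor:U_extension} on the neighborhood where the inner product with the base vector is nonzero. The paper simply combines this with transitivity of the $\Unitary(\fA)$-action (Kadison transitivity) and the standard criterion from Steenrod that a local cross-section at a single point yields a principal bundle, whereas you translate the section to an arbitrary point by a fixed fiber element and write out the resulting trivializations explicitly --- the same argument, with the routine machinery spelled out rather than cited.
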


\begin{proof}
Since $\Unitary(\fA)$ acts transitively on $\bbS \cH$ by the Kadison transitivity theorem, it suffices to show that $U \mapsto \pi(U)\Psi$ has a continuous local section on a neighborhood of $\Psi$ \cite{Steenrod}. Let $O = \qty{\Omega \in \bbS \cH: \ev{\Psi,\Omega} \neq 0}$. With $\cU:Y \rightarrow \Unitary(\fA)$ as in Corollary \ref{cor:U_extension}, we see that 
\[
\sigma:O \rightarrow \Unitary(\fA), \quad \sigma(\Omega) = \cU(\Psi,\Omega) 
\]
is such a section.
\end{proof}

One of our main results in Section \ref{sec:michael_application} will be to show that a family of pure states $\omega:X \rightarrow \sP(\fA)$ can be deformed by a homotopy of unitaries so as to all evaluate to one on a given projection $P \in \fA$. The following theorem is a precursor to that result, where we allow ourselves the additional assumption that $\omega_x(P) > 0$ for all $x \in X$.

\begin{thm}\label{thm:weak*_selection_theorem}
Let $\fA$ be a unital $C^*$-algebra and let $P \in \fA$ be a projection. Let $X$ be a paracompact Hausdorff space and let $\omega:X \rightarrow \sP(\fA)$, $x \mapsto \omega_x$ be weak*-continuous and satisfy $\omega_x(P) > 0$ for all $x \in X$. There exists a continuous map $U:X \rightarrow \Unitary(\fA)$ such that 
\[
\omega_x(U_x^*PU_x) = 1
\]
for all $x \in X$ and 
\[
\norm{\1 - U_x} = \sqrt{2 - 2\sqrt{\omega_x(P)}}.
\]
\end{thm}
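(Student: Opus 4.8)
The plan is to produce $U$ by exponentiating a continuous selection of a suitable carrier on $X$, using Michael's convex-valued selection theorem. For $x \in X$ let $(\cH_x, \pi_x, \Omega_x)$ be the GNS triple of the pure state $\omega_x$, so $\pi_x$ is an irreducible representation of $\fA$, and set $p_x \defeq \omega_x(P) \in (0,1]$ and $\theta_x \defeq \cos^{-1}\sqrt{p_x} \in [0, \pi/2]$. Since $P$ is a projection, $\norm{\pi_x(P)\Omega_x}^2 = \omega_x(P) = \ev{\Omega_x, \pi_x(P)\Omega_x} = p_x$, so when $p_x < 1$ the unit vector $\Psi_x \defeq \pi_x(P)\Omega_x/\sqrt{p_x}$ has $\ev{\Omega_x, \Psi_x} = \sqrt{p_x} \in (0,1)$, and Lemmas \ref{lem:U_xy}--\ref{lem:T_xy}, applied to the pair $(\Omega_x, \Psi_x)$, produce a self-adjoint $T_x \in \B(\cH_x)$ that is supported on $\cK_x \defeq \vecspan\{\Omega_x, \pi_x(P)\Omega_x\}$, has $\sigma(T_x) \subseteq \{0, \pm\theta_x\}$, and satisfies $e^{iT_x}\Omega_x = \Psi_x$; when $p_x = 1$ put $T_x \defeq 0$ and $\cK_x \defeq \bbC\Omega_x$. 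Define
\[
\phi(x) \defeq \bigl\{ A \in \fA_\tn{sa} : \pi_x(A)|_{\cK_x} = T_x|_{\cK_x}, \ \norm{A} \leq \theta_x \bigr\}.
\]
Three facts underlie this choice. (i) $\phi(x)$ is a nonempty closed convex subset of the real Banach space $\fA_\tn{sa}$: nonemptiness is the Kadison transitivity theorem with norm bounds (\cite[Thm.~3.4]{Spiegel}) applied in $\pi_x$ to the self-adjoint $T_x$ and a basis of $\cK_x$, while closedness and convexity are immediate. (ii) Every $A \in \phi(x)$ satisfies $\norm{A} = \theta_x$ exactly, because $\norm{A} \geq \norm{\pi_x(A)} \geq \norm{\pi_x(A)|_{\cK_x}} = \norm{T_x|_{\cK_x}} = \theta_x$. (iii) If $\pi_x(A)|_{\cK_x} = T_x|_{\cK_x}$ then $\cK_x$ is $\pi_x(A)$-invariant, hence $\pi_x(e^{iA})\Omega_x = e^{i\pi_x(A)}\Omega_x = e^{iT_x}\Omega_x = \Psi_x \in \Im\pi_x(P)$, so $\omega_x(e^{-iA}Pe^{iA}) = \ev{\Psi_x, \pi_x(P)\Psi_x} = 1$.

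Granting that $\phi$ is lower semicontinuous, the convex-valued selection theorem gives a continuous map $A \colon X \to \fA_\tn{sa}$ with $A_x \in \phi(x)$ for all $x$, and then $U_x \defeq e^{iA_x}$ is the desired map: $x \mapsto U_x$ is continuous since the exponential is norm-continuous; $\omega_x(U_x^* P U_x) = 1$ by (iii); and since $A_x = A_x^*$ with $\norm{A_x} = \theta_x \leq \pi/2$, the spectral mapping theorem (as in Lemma \ref{lem:U_xy}(d)) yields $\norm{\1 - U_x} = \sqrt{2 - 2\cos\norm{A_x}} = \sqrt{2 - 2\sqrt{\omega_x(P)}}$.

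The substantive point is lower semicontinuity of $\phi$. Fix $x_0$, $A_0 \in \phi(x_0)$, and $\varepsilon > 0$. If $p_{x_0} = 1$ then $A_0 = 0$ by (ii), and for $x$ near $x_0$ every element of the nonempty set $\phi(x)$ has norm $\theta_x < \varepsilon$ by continuity of $x \mapsto \theta_x$, so $\phi(x) \cap B_\varepsilon(A_0) \neq \varnothing$. Now suppose $p_{x_0} < 1$, so $p_x$ lies in a compact subinterval of $(0,1)$ and $\cK_x$ is two-dimensional for $x$ near $x_0$. Expanding the affine constraint of $\phi(x)$ in the basis $\{\Omega_x, \pi_x(P)\Omega_x\}$ and using Lemma \ref{lem:T_xy}, one finds $B_x, C_x \in \fA$ depending norm-continuously on $x$ near $x_0$ --- concretely $B_x = -i\theta_x(p_x(1-p_x))^{-1/2}(P - p_x\1)$ and $C_x = i\theta_x(p_x/(1-p_x))^{1/2}(\1 - P)$ --- such that $\pi_x(A)|_{\cK_x} = T_x|_{\cK_x}$ holds if and only if $g^{(1)}_x(A) \defeq \omega_x\bigl((A-B_x)^*(A-B_x)\bigr) = 0$ and $g^{(2)}_x(A) \defeq \omega_x\bigl((AP-C_x)^*(AP-C_x)\bigr) = 0$. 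The key observation is that each $g^{(j)} \colon X \times \fA_\tn{sa} \to [0,\infty)$ is jointly continuous, which uses only weak*-continuity of $\omega$ and norm-continuity of $B_x, C_x$, through $\abs{\omega_x(F_x) - \omega_{x_0}(F_{x_0})} \leq \norm{F_x - F_{x_0}} + \abs{\omega_x(F_{x_0}) - \omega_{x_0}(F_{x_0})}$; hence the defects $\delta^{(j)}_x \defeq g^{(j)}_x(A_0)^{1/2}$ tend to $0$ as $x \to x_0$, since $\delta^{(j)}_{x_0} = 0$. Kadison transitivity with norm bounds in $\pi_x$ then yields $\Delta_x \in \fA_\tn{sa}$ with $\pi_x(A_0 + \Delta_x)|_{\cK_x} = T_x|_{\cK_x}$ and $\norm{\Delta_x} \leq \kappa_{x_0}(\delta^{(1)}_x + \delta^{(2)}_x)$, the constant $\kappa_{x_0}$ reflecting the Gram matrix of $\{\Omega_x, \pi_x(P)\Omega_x\}$, which stays invertible with bounded inverse near $x_0$ because $p_x$ is bounded away from $0$ and $1$; hence $\norm{\Delta_x} \to 0$. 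As $A_0 + \Delta_x$ may slightly exceed the norm bound, set $A_x \defeq f_{\theta_x}(A_0 + \Delta_x)$ with $f_t(s) \defeq \max(-t, \min(t,s))$: since $\cK_x$ is $\pi_x(A_0 + \Delta_x)$-invariant with $\sigma(T_x|_{\cK_x}) = \{\pm\theta_x\} \subseteq [-\theta_x, \theta_x]$, applying $f_{\theta_x}$ and passing through $\pi_x$ leaves the restriction to $\cK_x$ unchanged, so $A_x \in \phi(x)$, while $\norm{A_x - A_0} \leq 2\norm{\Delta_x} + (\theta_{x_0} - \theta_x)_+ \to 0$ (using $\norm{A_0} = \theta_{x_0}$ from (ii)). Thus $\phi(x) \cap B_\varepsilon(A_0) \neq \varnothing$ for all $x$ in a neighborhood of $x_0$, establishing lower semicontinuity.

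I expect this lower-semicontinuity argument to be the main obstacle. In Theorem \ref{thm:unitary_Kadison_lemma} the nearby parameters live in one fixed Hilbert space and can be related directly by unitaries via Lemma \ref{lem:Stiefel}; here the GNS representations of $\omega_x$ for different $x$ are a priori unrelated, so the perturbation of $A_0$ must be controlled purely through weak*-continuity of state evaluation (which produces the vanishing defects $\delta^{(j)}_x$) together with a Kadison transitivity argument carried out inside each individual $\pi_x$, and the rigidity $\norm{A} \equiv \theta_x$ of the norm constraint then has to be reconciled by the functional-calculus truncation.
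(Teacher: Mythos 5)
Your proposal is correct and follows essentially the same route as the paper's proof: the same carrier $\phi(x)$ built from $T_{\Omega_x,\Psi_x}$ on $\vecspan\{\Omega_x,\pi_x(P)\Omega_x\}$, nonemptiness via Kadison transitivity with norm bounds, lower semicontinuity by showing the defects of $A_0$ on the two spanning vectors vanish continuously through weak*-continuity (your explicit $B_x$, $C_x$ are exactly the elements the paper leaves implicit), correction by a small self-adjoint perturbation followed by the same functional-calculus truncation to restore the norm bound, and finally exponentiating the selection. The only differences are cosmetic: you get the exact norm equality from the rigidity $\norm{A_x}=\theta_x$ (the paper's displayed chain only gives the upper bound), and your one-line appeal to ``Kadison transitivity with norm bounds'' for $\Delta_x$ tacitly includes first building a norm-controlled self-adjoint operator on $\cH_x$ with the prescribed values, the step the paper handles via \cite[Lem.~5.4.2]{KadisonRingroseI}.
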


\begin{proof}
First, let us establish some notation. Given $x \in X$, let $(\cH_x, \pi_x, \Omega_x)$ be the GNS representation of $\omega_x$. Let
\begin{equation}\label{eq:Psi_x_def}
\Psi_x = \frac{\pi_x(P)\Omega_x}{\norm{\pi_x(P)\Omega_x}} = \frac{\pi_x(P)\Omega_x}{\sqrt{\omega_x(P)}}
\end{equation}
and let 
\begin{equation}\label{eq:Upsilon_x_def}
\Upsilon_x = \Psi_x - \ev{\Omega_x, \Psi_x}\Omega_x.
\end{equation}
We compute:
\begin{align*}
\ev{\Omega_x, \Psi_x} &= \sqrt{\omega_x(P)}, \\
 \norm{\Omega_x - \Psi_x} &= \sqrt{2 - 2\sqrt{\omega_x(P)}}, \\
 \qqtext{and} \norm{\Upsilon_x} &= \sqrt{1 - \omega_x(P)}
\end{align*}
and note that these are all continuous functions of $x$.

Let $\cH_{\Omega_x, \Psi_x} = \vecspan\qty{\Omega_x, \Psi_x}$ and let $V_{\Omega_x, \Psi_x} \in \Unitary(\cH_x)$ and $T_{\Omega_x, \Psi_x} \in \B(\cH_x)_\tn{sa}$ be as in Lemma \ref{lem:T_xy}. Note that $\norm{T_{\Omega_x, \Psi_x}} = \theta(\Omega_x, \Psi_x)$, where $\theta(\Omega_x, \Psi_x)$ is as in \eqref{eq:theta_def}. Let $Q_x \in \B(\cH_x)$ be the projection onto $\cH_{\Omega_x, \Psi_x}$. Consider the carrier $\phi:X \rightarrow \fA_\tn{sa}$ defined by 
\[
\phi(x) = \qty{A \in \fA_\tn{sa}: \pi_x(A)Q_x = T_{\Omega_x, \Psi_x}Q_x \tn{ and } \norm{A} \leq \norm{T_{\Omega_x, \Psi_x}}}.
\]
By the Kadison transitivity theorem with norm bounds (see \cite[Thm.~3.4]{Spiegel} or \cite[Thm.~2.7.5]{PedersenCAlgAutomorphisms}), $\phi(x)$ is nonempty for all $x \in X$. It is clear that $\phi(x)$ is closed and convex for all $x \in X$. Since $X$ is paracompact Hausdorff, the convex-valued selection theorem will apply if we can show that $\phi$ is lower semicontinuous. 

Fix $x_0 \in X$, $A_0 \in \phi(x_0)$, and $\varepsilon > 0$. If $\omega_{x_0}(P) = 1$, then $A_0 = 0$. Observe that $x \mapsto \theta(\Omega_x, \Psi_x) = \cos^{-1}\sqrt{\omega_x(P)}$ is continuous, therefore we may choose an open neighborhood $O$ of $x_0$ such that $\theta(\Omega_x,\Psi_x) < \varepsilon$ for all $x \in O$. If $x \in O$, then $\phi(x) \subset B_\varepsilon(A_0)$, so in particular $\phi(x) \cap B_\varepsilon(A_0) \neq \varnothing$, as desired.

Now consider the case where $\omega_{x_0}(P) < 1$. We first set $O = \qty{x \in X: \omega_x(P) < 1}$, which is an open neighborhood of $x_0$. On $O$, we know $\Psi_x$ and $\Omega_x$ are linearly independent, hence $\Upsilon_x \neq 0$.  Using the form of $T_{\Omega_x, \Psi_x}$ given in Lemma \ref{lem:T_xy}, one has
\begin{equation}\label{eq:big_ugly_1}
\norm{T_{\Omega_x, \Psi_x}\Omega_x - \pi_x(A_0)\Omega_x} = \norm{-i\theta(\Omega_x, \Psi_x)\frac{\Upsilon_x}{\norm{\Upsilon_x}}   - \pi_x(A_0)\Omega_x}
\end{equation}
and
\begin{equation}\label{eq:big_ugly_2}
\norm{T_{\Omega_x, \Psi_x}\Upsilon_x - \pi_x(A_0)\Upsilon_x} = \norm{i\theta(\Omega_x, \Psi_x)\norm{\Upsilon_x}\Omega_x - \pi_x(A_0)\Upsilon_x}.
\end{equation}
Expressing $\Upsilon_x$ in terms of $\Omega_x$ using \eqref{eq:Upsilon_x_def} and \eqref{eq:Psi_x_def}, one can see that both \eqref{eq:big_ugly_1} and \eqref{eq:big_ugly_2} are of the form $x \mapsto \norm{\pi_x(B_x)\Omega_x} = \sqrt{\omega_x(B_x^*B_x)}$ for a continuous function $B: O \rightarrow \fA$, $x \mapsto B_x$. Functions of this form are continuous by weak*-continuity of $\omega$, so \eqref{eq:big_ugly_1} and \eqref{eq:big_ugly_2} are continuous functions of $x$ on $O$. Since $\eqref{eq:big_ugly_1}$ and $\eqref{eq:big_ugly_2}$ evaluate to zero at $x_0$, we may shrink $O$ so that these norms are less than $\varepsilon/6$ for all $x \in O$. Let us also shrink $O$ so that 
\[
\abs{\theta(\Omega_x, \Psi_x) - \theta(\Omega_{x_0}, \Psi_{x_0})} < \frac{\varepsilon}{3}
\]
for all $x \in O$.

Then for any $x \in O$,  there exists a self-adjoint $S \in \B(\cH_x)_\tn{sa}$ such that
\[
S\Omega_x = T_{\Omega_x, \Psi_x}\Omega_x - \pi_x(A_0)\Omega_x \qqtext{and} S\Upsilon_x = T_{\Omega_x, \Psi_x}\Upsilon_x - \pi_x(A_0)\Upsilon_x
\]
and $\norm{S} < \varepsilon/3$, by \cite[Lem.\ 5.4.2]{KadisonRingroseI}. By the Kadison transitivity theorem with norm bounds, there exists $A_1 \in \fA_\tn{sa}$ such that
\[
\pi_x(A_1)\Omega_x = T_{\Omega_x, \Psi_x}\Omega_x - \pi_x(A_0)\Omega_x \qqtext{and} \pi_x(A_1)\Upsilon_x = T_{\Omega_x, \Psi_x}\Upsilon_x - \pi_x(A_0)\Upsilon_x
\]
and $\norm{A_1} < \varepsilon/3$. Observe that $A_0 + A_1 \in B_\varepsilon(A_0)$ and $\pi_x(A_0 + A_1)Q_x = T_{\Omega_x, \Psi_x}Q_x$, but we do not necessarily have the desired norm bound on $A_0 +  A_1$ to conclude that $A_0 + A_1 \in \phi(x)$.

To fix this, consider the continuous function $f:\bbR \rightarrow \bbR$ defined by
\[
f(\lambda) = \begin{cases} \theta(\Omega_x, \Psi_x)&: \lambda \geq \theta(\Omega_x, \Psi_x) \\ \lambda &: \abs{\lambda} \leq \theta(\Omega_x, \Psi_x) \\ -\theta(\Omega_x, \Psi_x) &: \lambda \leq - \theta(\Omega_x, \Psi_x)\end{cases}.
\]
Then $f(A_0 + A_1)$ is self-adjoint and $\norm{f(A_0 + A_1)} \leq \theta(\Omega_x, \Psi_x)$. Furthermore, since $\norm{A_0 + A_1} < \theta(\Omega_{x_0}, \Psi_{x_0}) + \varepsilon/3$, we know 
\[
\abs{\lambda - f(\lambda)} \leq \abs{\theta(\Omega_{x_0}, \Psi_{x_0}) +  \frac{\varepsilon}{3} - \theta(\Omega_x, \Psi_x)} < \frac{2\varepsilon}{3}
\]
for all $\lambda \in \sigma(A_0 + A_1)$. Thus,
\begin{align*}
\norm{A_0 - f(A_0 + A_1)} &\leq \norm{A_1} + \norm{A_0 + A_1 - f(A_0 + A_1)} < \varepsilon, 
\end{align*}

Finally, observe that $\Omega_x \mp i\norm{\Upsilon_x}^{-1} \Upsilon_x$ are eigenvectors of $T_{\Omega_x, \Psi_x}$ with eigenvalues $\pm \theta(\Omega_x, \Psi_x)$. Since $\pi_x(A_0 + A_1)Q_x = T_{\Omega_x, \Psi_x}Q_x$, these are also eigenvectors of $\pi_x(A_0 + A_1)$, with the same eigenvalues. Hence, they are also eigenvectors of $\pi_x(f(A_0 + A_1))$ with eigenvalues $f(\pm \theta(\Omega_x, \Psi_x)) = \pm \theta(\Omega_x, \Psi_x)$. Since these eigenvectors span $\cH_{\Omega_x, \Psi_x}$, we see that $\pi_x(f(A_0 + A_1))Q_x = T_{\Omega_x, \Psi_x}Q_x$. Therefore $A_0 + A_1 \in \phi(x) \cap B_\varepsilon(A_0)$. This proves lower semicontinuity of $\phi$.

Therefore, by the convex-valued selection theorem, there exists a continuous selection $A:X\rightarrow \fA_\tn{sa}$, $x \mapsto A_x$ for $\phi$. Then $U : X \rightarrow \Unitary(\fA)$, $x \mapsto e^{iA_x}$ is continuous and satisfies 
\[
\pi_x(U_x)\Omega_x = e^{i\pi_x(A_x)}\Omega_x = e^{i\pi_x(A_x)}\Omega_x = e^{iT_{\Omega_x, \Psi_x}}\Omega_x = V_{\Omega_x,\Psi_x}\Omega_x = \Psi_x,
\]
from which it follows that $\omega_x(U_x^*PU_x) = 1$. Since $\norm{A_x} \leq \theta(\Omega_x, \Psi_x) < \pi/2$ for all $x \in X$, we also have
\[
\norm{\1 - U_x} = \abs{1 - e^{i\norm{A_x}}} \leq \abs{1 - e^{i\theta(\Omega_x, \Psi_x)}}  = \sqrt{2 - 2\sqrt{\omega_x(P)}},
\]
as desired.
\end{proof}

\begin{cor}\label{cor:unitary_onto_projection}
Let $\fA$ be a unital $C^*$-algebra, let $(\hilbH, \pi)$ be a nonzero irreducible representation of $\fA$, and let $P \in \B(\hilbH)$ be a nonzero projection. Let 
\[
X = \qty{\Psi \in \bbS \hilbH: P\Psi \neq 0}.
\]
There exists a continuous map $U:X \rightarrow \Unitary(\fA)$, $\Psi \mapsto U_\Psi$ such that
\[
\pi(U_\Psi)\Psi = \frac{P\Psi}{\norm{P\Psi}}
\]
and 
\[
\norm{\1 - U_\Psi} = \norm{\frac{P\Psi}{\norm{P\Psi}} - \Psi} = \sqrt{2 - 2\norm{P\Psi}}.
\]
for all $\Psi \in X$.
\end{cor}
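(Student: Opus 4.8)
The plan is to deduce the corollary directly from Theorem~\ref{thm:unitary_Kadison_lemma}. For $\Psi \in X$ set
\[
\Omega_\Psi \defeq \frac{P\Psi}{\norm{P\Psi}},
\]
which is a well-defined unit vector because $P\Psi \neq 0$ by the definition of $X$; moreover $\Psi \mapsto \Omega_\Psi$ is continuous on $X$. Using $P = P^* = P^2$ one computes
\[
\ev{\Psi,\Omega_\Psi} = \frac{\ev{\Psi,P\Psi}}{\norm{P\Psi}} = \frac{\norm{P\Psi}^2}{\norm{P\Psi}} = \norm{P\Psi} \in (0,1],
\]
where the last membership uses $0 < \norm{P\Psi} \leq \norm{P}\,\norm{\Psi} \leq 1$. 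Hence $(\Psi,\Omega_\Psi) \in Y_+$ for every $\Psi \in X$, and the map $X \to Y_+$, $\Psi \mapsto (\Psi,\Omega_\Psi)$ is continuous.

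Next I would invoke Theorem~\ref{thm:unitary_Kadison_lemma} to obtain a continuous $\cKU \colon Y_+ \to \Unitary(\fA)$ satisfying \eqref{eq:U_transitive_lemma}, and then define
\[
U_\Psi \defeq \cKU(\Psi,\Omega_\Psi).
\]
Continuity of $\Psi \mapsto U_\Psi$ is immediate from the continuity of $\cKU$ and of $\Psi \mapsto (\Psi,\Omega_\Psi)$. The first identity in \eqref{eq:U_transitive_lemma} gives $\pi(U_\Psi)\Psi = \Omega_\Psi = P\Psi/\norm{P\Psi}$, while the second gives $\norm{\1 - U_\Psi} = \norm{\Psi - \Omega_\Psi}$. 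Finally, the one-line computation
\[
\norm{\Psi - \Omega_\Psi}^2 = 2 - 2\Re\ev{\Psi,\Omega_\Psi} = 2 - 2\norm{P\Psi}
\]
yields $\norm{\1 - U_\Psi} = \norm{P\Psi/\norm{P\Psi} - \Psi} = \sqrt{2 - 2\norm{P\Psi}}$, as claimed.

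I do not anticipate any real obstacle here. The only point requiring a moment's care is the observation that $\ev{\Psi,\Omega_\Psi}$ is real and strictly positive — this is exactly where the hypothesis that $P$ is a projection enters — so that the pair $(\Psi,\Omega_\Psi)$ lands in the domain $Y_+$ of the map furnished by Theorem~\ref{thm:unitary_Kadison_lemma}, rather than merely in the larger set $Y$ of Corollary~\ref{cor:U_extension}, for which only the weaker norm bound \eqref{eq:U_extension} would be available.
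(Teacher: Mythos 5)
Your argument is correct and is essentially the paper's own proof: both compose the continuous map $X \to Y_+$, $\Psi \mapsto (\Psi, P\Psi/\norm{P\Psi})$ (valid because $\ev{\Psi, P\Psi/\norm{P\Psi}} = \norm{P\Psi} \in (0,1]$) with the map $\cKU$ from Theorem~\ref{thm:unitary_Kadison_lemma}, and read off the stated identities from \eqref{eq:U_transitive_lemma}. Your extra remark on why the pair lands in $Y_+$ rather than merely in $Y$ is exactly the right point of care, but introduces nothing beyond the paper's reasoning.
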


\begin{proof}
Let $Y_+$ be as in Theorem \ref{thm:unitary_Kadison_lemma}. Observe that we have a continuous map $X \rightarrow Y_+$, $\Psi \mapsto (\Psi, P\Psi/\norm{P\Psi})$. Composing this map with the continuous map $Y_+ \rightarrow \Unitary(\fA)$ from Theorem \ref{thm:unitary_Kadison_lemma} yields the desired result.
\end{proof}

The families of unitaries developed in the lemma below and in Lemma \ref{lem:ZPgammadelta} will be used in Section \ref{sec:equiLCn} to show that a certain collection of subsets of $\Unitary(\fA)$ is equi-$LC^n$, as is needed for applying the finite-dimensional selection theorem.

\begin{lem}\label{lem:V_deform_to_U}
Let $\fA$ be a unital $C^*$-algebra, let $(\hilbH, \pi)$ be a nonzero irreducible representation of $\fA$, and let $\Psi \in \bbS \hilbH$. Let
\[
Z = \qty{U \in \Unitary(\fA): \ev{\Psi, \pi(U)\Psi} > 0 \tn{ and } \norm{\1 - U} < 1}.
\]
Let $Y_+$ and $\cKU:Y_+ \rightarrow \Unitary(\fA)$ be as in Theorem \ref{thm:unitary_Kadison_lemma}. There exists a continuous function $\cKV:Z \times I \rightarrow \Unitary(\fA)$ such that 
\begin{enumerate}
	\item\label{ite:V(U,0)} $\cKV(U,0) = U$, 
	\item\label{ite:V(U,1)} $\cKV(U,1) = \cKU(\Psi, \pi(U)\Psi)$,
	\item\label{ite:V(U,s)x} $\pi(\cKV(U,s))\Psi = \pi(U)\Psi$,
	\item\label{ite:V(U,s)_norm} $\norm{\1 - \cKV(U,s)} \leq 3\norm{\1 - U}$
\end{enumerate}
for all $U \in Z$ and $s \in I$.
\end{lem}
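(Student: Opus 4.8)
The plan is to exploit the fact that, for $U \in Z$, both $U$ and $W \defeq \cKU(\Psi,\pi(U)\Psi)$ carry $\Psi$ to the same vector $\pi(U)\Psi$ under $\pi$ (note $(\Psi,\pi(U)\Psi) \in Y_+$ since $\ev{\Psi,\pi(U)\Psi} \in (0,1]$ by Cauchy--Schwarz), so that $V_0 \defeq W^*U$ lies in the stabilizer $\qty{V \in \Unitary(\fA) : \pi(V)\Psi = \Psi}$; I would then deform $V_0$ to $\1$ inside that stabilizer by rescaling a logarithm and left-translate by $W$. Concretely I would set
\[
\cKV(U,s) \defeq W\, e^{i(1-s)B}, \qquad B \defeq -i\Log V_0,
\]
using the principal branch. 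To see $B$ is well defined in $\fA_\tn{sa}$, note that by the norm identity in \eqref{eq:U_transitive_lemma} one has $\norm{\1 - W} = \norm{\Psi - \pi(U)\Psi} \le \norm{\1 - \pi(U)} \le \norm{\1 - U} < 1$, hence $\norm{\1 - V_0} = \norm{W - U} \le \norm{\1 - W} + \norm{\1 - U} < 2$, so $-1 \notin \sigma(V_0)$ and $\sigma(B) \subset (-\pi,\pi)$, $e^{iB} = V_0$.

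The key structural observation is that $\pi(B)\Psi = 0$: since $*$-homomorphisms commute with continuous functional calculus, $\pi(B) = -i\Log\pi(V_0)$, and $\pi(V_0)\Psi = \pi(W)^*\pi(W)\Psi = \Psi$ shows $\Psi$ is a $1$-eigenvector of $\pi(V_0)$, so it is killed by $\Log\pi(V_0)$. This is exactly what makes the intertwining property hold.

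I would then check the four conclusions in turn. At $s = 0$: $\cKV(U,0) = W e^{iB} = W V_0 = W W^* U = U$. At $s = 1$: $\cKV(U,1) = W = \cKU(\Psi,\pi(U)\Psi)$. Intertwining: since $\pi(B)\Psi = 0$ we get $e^{i(1-s)\pi(B)}\Psi = \Psi$, so $\pi(\cKV(U,s))\Psi = \pi(W)\Psi = \pi(U)\Psi$. For the norm bound, the triangle inequality gives $\norm{\1 - \cKV(U,s)} \le \norm{\1 - W} + \norm{\1 - e^{i(1-s)B}}$, and a spectral-calculus estimate—using that $\abs{\sin}$ is nondecreasing on $[0,\pi/2]$ and $\sigma(B) \subset (-\pi,\pi)$—gives $\norm{\1 - e^{i(1-s)B}} = \sup_{\lambda \in \sigma(B)} 2\abs{\sin\tfrac{(1-s)\lambda}{2}} \le \sup_{\lambda \in \sigma(B)} 2\abs{\sin\tfrac{\lambda}{2}} = \norm{\1 - V_0} \le 2\norm{\1 - U}$; altogether $\norm{\1 - \cKV(U,s)} \le \norm{\1 - U} + 2\norm{\1 - U} = 3\norm{\1 - U}$.

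Finally, continuity of $\cKV$ reduces to continuity of $U \mapsto B$: the map $U \mapsto \pi(U)\Psi$ is continuous, so $U \mapsto W$ is continuous by Theorem~\ref{thm:unitary_Kadison_lemma}, hence so is $U \mapsto V_0$; near any point of $Z$ the norm $\norm{\1 - U}$, and therefore $\norm{\1 - V_0}$, stays bounded away from $1$ and $2$ respectively, so $\sigma(V_0)$ remains in a fixed compact arc of $\T \setminus \qty{-1}$ on which $\Log$ acts continuously. I do not anticipate a genuine obstacle; the one point needing a little care is pinning the constant in the last bound at exactly $3$, which is what forces the use of two triangle inequalities together with the monotonicity estimate for $\norm{\1 - e^{i(1-s)B}}$ rather than a cruder comparison.
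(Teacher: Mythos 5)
Your construction is exactly the paper's: the paper sets $W(U)=\cKU(\Psi,\pi(U)\Psi)^*U$, takes $A(U)=-i\Log W(U)$, notes $\pi(A(U))\Psi=0$, and defines $\cKV(U,s)=\cKU(\Psi,\pi(U)\Psi)e^{i(1-s)A(U)}$, verifying the same four properties with the same triangle-inequality and functional-calculus bounds (your $V_0$, $B$ are its $W(U)$, $A(U)$). The proposal is correct and essentially identical in approach, with your spectral monotonicity estimate simply making explicit the step the paper states as $\norm{\1-e^{i(1-s)A(U)}}\leq\norm{\1-W(U)}$.
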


\begin{proof}
Define $W:Z \rightarrow \Unitary(\fA)$ by 
\[
W(U) = \cKU(\Psi, \pi(U)\Psi)^* U.
\]
Then $W$ is manifestly continuous and 
\begin{align*}
\norm{\1 - W(U)} &\leq \norm{\1 - \cKU(\Psi, \pi(U)\Psi)^*} + \norm{\1 - U} \\
&= \norm{\Psi -\pi(U)\Psi} + \norm{\1 - U} \\
&\leq 2\norm{\1 - U} < 2.
\end{align*}
Since $\norm{\1 - W(U)} < 2$, we can apply the principal branch of the logarithm to obtain a continuous function $A:Z \rightarrow \fA_\tn{sa}$, $A(U) = -i\Log W(U)$ with $\norm{A(U)} < \pi$ for all $U \in Z$. 

Observe that for any $U \in Z$ we have
\[
\pi(W(U))\Psi = \pi(\cKU(\Psi, \pi(U)\Psi))^{-1} \pi(U)\Psi = \Psi.
\]
It follows from continuous functional calculus that $\pi(A(U))\Psi = 0$.

We now define $\cKV:Z \times I \rightarrow \Unitary(\fA)$ by
\begin{align*}
\cKV(U,s) = \cKU(\Psi, \pi(U)\Psi)e^{i(1-s)A(U)}.
\end{align*}
At $s = 0$, we have
\[
\cKV(U, 0) = \cKU(\Psi, \pi(U)\Psi)W(U) = U.
\]
At $s = 1$, we have
\[
\cKV(U,1) = \cKU(\Psi, \pi(U)\Psi).
\]
For any $(U,s) \in Z \times I$, we have
\begin{align*}
\pi(\cKV(U,s))\Psi &= \pi(\cKU(\Psi, \pi(U)\Psi))e^{i(1 - s)\pi(A(U))}\Psi  \\
&= \pi(\cKU(\Psi, \pi(U)\Psi))\Psi = \pi(U)\Psi,
\end{align*}
and finally
\begin{align*}
\norm{\1 - \cKV(U,s)} &\leq \norm{\1 - \cKU(\Psi, \pi(U)\Psi)} + \norm{\1 - e^{i(1 - s)A(U)}}\\
&\leq \norm{\1 - U} + \norm{\1 - W(U)} \leq 3\norm{\1 - U}.
\end{align*}
This completes the proof.
\end{proof}

Our final continuous map in $\Unitary(\fA)$ involves compressing the representation $(\cH, \pi)$. We show that the compression of an irreducible representation is irreducible. This is an elementary fact, but we supply the proof for the convenience of the reader.

\begin{lem}\label{lem:compressed_rep}
  Let $\fA$ be a $C^*$-algebra, $P \in \fA$ a nonzero projection, and let
  $\pi:\fA \rightarrow \B(\hilbH)$  a representation of $\fA$. Denote by 
  $p:\hilbH \rightarrow \pi(P)\hilbH$ the orthogonal projection onto $\pi(P)\hilbH$ and define 
\[
\pi_P:P\fA P \rightarrow \B(\pi(P)\hilbH), \quad \pi_P(A) = p\pi(A)p^* \ .
\]
Note that the adjoint $p^*:\pi(P)\hilbH \rightarrow \hilbH$ is the inclusion of $\pi(P)\hilbH$ into $\hilbH$. Then the following holds true.
\begin{enumerate}
\item
  The map $\pi_P$ is a representation of $P\fA P$.
\item
  If $\pi(P)$ is nonzero and $\pi$ is irreducible,  then $\pi_P$ is nonzero and irreducible.
\item\label{ite:c}
  Assume that $\omega \in \sP(\fA)$ and that $(\cH, \pi)$ is the GNS representation of $\omega$.
  If $\omega(P) = 1$, then the restriction of $\omega$ to $P\fA P$ is pure and the GNS representation 
  of $\omega|_{P\fA P}$ is unitarily equivalent to $(\pi(P)\cH, \pi_P)$,
\end{enumerate}
\end{lem}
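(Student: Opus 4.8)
\textit{Proof proposal.} The plan is to treat the three statements in order, each reducing to a short computation with the partial isometry $p$ and its adjoint $p^*$, using the identities $p^*p = \pi(P)$, $pp^* = \id_{\pi(P)\hilbH}$, $p\,\pi(P) = p$, and $\pi(P)\,p^* = p^*$, together with the fact that $A \in P\fA P$ satisfies $A = PAP$, hence $\pi(A) = \pi(P)\pi(A)\pi(P)$ and in particular $\pi(A)\pi(P) = \pi(A) = \pi(P)\pi(A)$. For (a), linearity and the identity $\pi_P(A^*) = \pi_P(A)^*$ are immediate from the definition, and for multiplicativity I would compute, for $A, B \in P\fA P$, that $\pi_P(A)\pi_P(B) = p\,\pi(A)\,p^*p\,\pi(B)\,p^* = p\,\pi(A)\,\pi(P)\,\pi(B)\,p^* = p\,\pi(A)\pi(B)\,p^* = \pi_P(AB)$, using $\pi(A)\pi(P) = \pi(A)$.

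For (b), first note $\pi_P(P) = p\,\pi(P)\,p^* = p\,p^* = \id_{\pi(P)\hilbH}$, so $\pi_P$ is unital, hence nonzero as soon as $\pi(P)\hilbH \neq 0$. For irreducibility it suffices to show that $\pi_P(P\fA P)\xi$ is dense in $\pi(P)\hilbH$ for every nonzero $\xi \in \pi(P)\hilbH$, since this property characterizes irreducibility of a $*$-representation of a $C^*$-algebra. For $A \in \fA$ one has $\pi_P(PAP)\xi = p\,\pi(P)\pi(A)\pi(P)\,\xi = p\,\pi(A)\,\xi$, using $\pi(P)\xi = \xi$ and $p\,\pi(P) = p$; since $\pi$ is irreducible the set $\{\pi(A)\xi : A \in \fA\}$ is dense in $\hilbH$, and applying the continuous surjection $p : \hilbH \to \pi(P)\hilbH$ shows $\{\pi_P(PAP)\xi : A \in \fA\}$ is dense in $\pi(P)\hilbH$, as $PAP$ ranges over all of $P\fA P$.

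For (c), the first step is to observe that $\omega(P) = 1$ forces $\pi(P)\Omega = \Omega$ for the cyclic vector $\Omega$ of the GNS representation: indeed $\norm{\pi(P)\Omega}^2 = \ev{\Omega, \pi(P)\Omega} = \omega(P) = 1 = \norm{\Omega}^2$ and $\pi(P)$ is a projection. In particular $\Omega \in \pi(P)\hilbH$ and $\pi(P) \neq 0$. Since $P\fA P$ is unital with unit $P$ and $\omega(P) = 1$, the restriction $\omega|_{P\fA P}$ is a state. For $A \in P\fA P$ one checks $\pi_P(A)\Omega = p\,\pi(A)\Omega = \pi(A)\Omega$ (again using $\pi(P)\Omega = \Omega$), so $\ev{\Omega, \pi_P(A)\Omega} = \omega(A)$, and the computation from part (b) with $\xi = \Omega$ shows $\Omega$ is cyclic for $\pi_P$. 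Hence $(\pi(P)\hilbH, \pi_P, \Omega)$ is a cyclic representation implementing $\omega|_{P\fA P}$, so by uniqueness of the GNS construction it is unitarily equivalent to the GNS representation of $\omega|_{P\fA P}$. Finally, $\pi$ is irreducible (being the GNS representation of the pure state $\omega$) and $\pi(P) \neq 0$, so by part (b) the representation $\pi_P$ — equivalently, the GNS representation of $\omega|_{P\fA P}$ — is irreducible, which means $\omega|_{P\fA P}$ is pure.

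I do not anticipate a genuine obstacle; the statement is essentially bookkeeping with compressions. The one place that requires a little care is the irreducibility argument in (b): phrasing it through density of the cyclic subspaces and correctly collapsing the factors $\pi(P)$ that appear upon compressing, and then reusing that argument inside the proof of (c).
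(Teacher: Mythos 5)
Your proof is correct, and parts (a) and (c) run along essentially the same lines as the paper's. The one genuine divergence is in (b): the paper invokes the Kadison transitivity theorem to produce, for nonzero $\Psi,\Omega \in \pi(P)\hilbH$, an element $A \in \fA$ with $\pi(A)\Psi = \Omega$ exactly, so that $\pi_P(PAP)\Psi = \Omega$ and any nonzero invariant subspace is immediately all of $\pi(P)\hilbH$; you instead use the elementary characterization of irreducibility through cyclicity of every nonzero vector, push the dense orbit $\pi(\fA)\xi$ through the contraction $p$ (which preserves density onto $\pi(P)\hilbH$, since $p$ fixes that subspace and is continuous), and conclude that $\pi_P(P\fA P)\xi$ is dense. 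Your route is more elementary in that it avoids Kadison transitivity for this lemma, at the cost of the density/closure bookkeeping; the paper's is shorter and is natural in context, since Kadison transitivity with norm bounds is already the workhorse of the surrounding sections. In (c) you also verify cyclicity of $\Omega$ for $\pi_P$ explicitly before appealing to GNS uniqueness, where the paper deduces it implicitly from irreducibility of $\pi_P$; both are valid, and your explicit check that $\omega|_{P\fA P}$ is a state (via $\omega(P)=1$ and unitality of $P\fA P$) fills in a point the paper leaves tacit.
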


\begin{proof}
  (a) It is clear that $\pi_P$ is linear and preserves the $*$-operation. For any $A \in P\fA P$, we have
  $AP = PA = A$. Since $p^*p = \pi(P)$, we have for any $A, B \in P\fA P$,
\begin{align*}
\pi_P(AB) = p\pi(A)\pi(B)p^* = p\pi(A)\pi(P)\pi(B)p^* = p\pi(A)p^* p \pi(B)p^* = \pi_P(A) \pi_P(B),
\end{align*}
so $\pi_P$ is a representation.

(b) Suppose $\pi(P)$ is nonzero and $\pi$ is irreducible. Then $\pi_P(P) = pp^*pp^* = \id_{\pi(P)\hilbH}$, so
$\pi_P$ is nonzero. To show that $\pi_P$ is irreducible, let $\Psi, \Omega \in \pi(P)\hilbH$ be arbitrary
nonzero vectors. By the Kadison transitivity theorem, there exists $A \in \fA$ such that $\pi(A)\Psi = \Omega$.
Thus,
\[
  \pi_P(PAP)\Psi = p\pi(PAP)p^* \Psi = p\pi(A)p^* \Psi = p\pi(A)\Psi = p\Omega = \Omega.
\]
It follows that if $\cK$ is a nonzero invariant subspace for $\pi_P$, then $\cK = \pi(P)\hilbH$. Thus,
$\pi_P$ is irreducible.

(c) Let $\Omega \in \cH$ be a cyclic unit vector representing $\omega$. Since $\omega(P) = 1$, we know
$\pi(P)\Omega = \Omega$. By (a) and (b) one concludes that $(\pi(P)\cH, \pi_P)$ is an irreducible representation of
$P\fA P$ with $\Omega \in \pi(P)\cH$ representing $\omega|_{P\fA P}$. It follows that the GNS representation of $\omega|_{P\fA P}$ is unitarily equivalent to $(\pi(P)\cH, \pi_P)$, hence $\omega|_{P\fA P}$ is pure since its GNS representation is irreducible.
\end{proof}

The following elementary bound will be used several times in the rest of the paper. Again, we provide the proof for the convenience of the reader.

\begin{lem}\label{lem:normalization_bound}
If $\cH$ is a Hilbert space and $\Psi, \Omega \in \cH \setminus \qty{0}$, then
\begin{equation}\label{eq:normalization_bound}
\norm{\frac{\Psi}{\norm{\Psi}} - \frac{\Omega}{\norm{\Omega}}}^2 \leq \frac{\norm{\Psi - \Omega}^2}{\norm{\Psi}\norm{\Omega}}
\end{equation}
with equality if and only if $\norm{\Psi} = \norm{\Omega}$.
\end{lem}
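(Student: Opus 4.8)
The plan is to reduce \eqref{eq:normalization_bound} to the arithmetic--geometric mean inequality by expanding both sides. Write $a = \norm{\Psi}$ and $b = \norm{\Omega}$, both strictly positive by hypothesis.

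First I would expand the left-hand side, using conjugate-linearity in the first argument and linearity in the second:
\[
\norm{\frac{\Psi}{\norm{\Psi}} - \frac{\Omega}{\norm{\Omega}}}^2 = \frac{\norm{\Psi}^2}{a^2} + \frac{\norm{\Omega}^2}{b^2} - \frac{2\Re \ev{\Psi,\Omega}}{ab} = 2 - \frac{2\Re \ev{\Psi,\Omega}}{ab}.
\]
Next I would expand the numerator on the right-hand side in the same way:
\[
\frac{\norm{\Psi - \Omega}^2}{\norm{\Psi}\norm{\Omega}} = \frac{a^2 + b^2 - 2\Re \ev{\Psi,\Omega}}{ab}.
\]
The term $-2\Re\ev{\Psi,\Omega}/(ab)$ occurs on both sides, so after cancelling it the asserted inequality is equivalent to $2 \le (a^2+b^2)/(ab)$, that is, to $2ab \le a^2 + b^2$, that is, to $(a-b)^2 \ge 0$. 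Since every step is an equivalence (we only subtracted a common quantity), this proves \eqref{eq:normalization_bound}, and equality holds if and only if $(a-b)^2 = 0$, i.e.\ $a = b$, i.e.\ $\norm{\Psi} = \norm{\Omega}$, which is the equality clause.

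I do not anticipate any genuine obstacle here; the only point worth emphasizing is that the inner-product term cancels exactly, so the estimate is completely insensitive to the relative phase of $\Psi$ and $\Omega$ and is in fact purely a statement about the two norms.
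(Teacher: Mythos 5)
Your proof is correct and is essentially the same as the paper's: both expand the norms so that the inner-product term cancels and the inequality reduces to $2\norm{\Psi}\norm{\Omega} \leq \norm{\Psi}^2 + \norm{\Omega}^2$, with equality exactly when $\norm{\Psi} = \norm{\Omega}$.
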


\begin{proof}
We compute:
\begin{align*}
\norm{\frac{\Psi}{\norm{\Psi}} - \frac{\Omega}{\norm{\Omega}}}^2 &= 2 - 2\Re \qty(\frac{\ev{\Psi,\Omega}}{\norm{\Psi}\norm{\Omega}}) =  2 + \frac{\norm{\Psi - \Omega}^2 - \norm{\Psi}^2 - \norm{\Omega}^2}{\norm{\Psi}\norm{\Omega}}
\end{align*}
Using the inequality 
\begin{equation}\label{eq:quadratic_ineq}
-\norm{\Psi}^2 - \norm{\Omega}^2 \leq -2\norm{\Psi}\norm{\Omega}
\end{equation}
yields \eqref{eq:normalization_bound}. Since equality holds in \eqref{eq:quadratic_ineq} if and only if $\norm{\Psi} = \norm{\Omega}$, we see that equality holds in \eqref{eq:normalization_bound} if and only if $\norm{\Psi} = \norm{\Omega}$. 
\end{proof}

\begin{lem}\label{lem:ZPgammadelta}
Let $\fA$ be a unital $C^*$-algebra, let $(\cH, \pi)$ be a nonzero irreducible representation of $\fA$, let $P \in \fA$ be a projection, and let $\Psi \in \bbS\cH$ such that $\pi(P)\Psi \neq 0$. 
Let $\gamma, \delta > 0$ and suppose
\begin{equation}\label{eq:beta_gamma_delta_condition}
\delta^2 < 2\gamma \norm{\pi(P)\Psi} \qqtext{and} \beta \delta < 2,
\end{equation}
where
\begin{equation}\label{eq:beta_def}
\beta = \frac{1}{\sqrt{\gamma \norm{\pi(P)\Psi}}} + \sqrt{\frac{2}{2\gamma \norm{\pi(P)\Psi} - \delta^2}}.
\end{equation}
Define
\[
Z_{P,\gamma,\delta} = \qty{U \in \Unitary(\fA): \norm{\pi(PU)\Psi} \geq \gamma \tn{ and } \norm{\1 - U} < \delta}
\]
Then there exists a continuous function $\cKW :Z_{P,\gamma, \delta} \times I \rightarrow \Unitary(\fA)$ such that, for all $U \in Z_{P,\gamma, \delta}$ and $s \in I$:
\begin{enumerate}
	\item\label{ite:W(U,0)} $\cKW(U, 0) = U$,
	\item\label{ite:(1-P)W(U,s)} $(\1 - P)\cKW(U,s) = (\1 - P)U$,
	\item\label{ite:PW(U,s)x} $\norm{\pi(P\cKW(U,s))\Psi} = \norm{\pi(PU)\Psi}$,
	\item\label{ite:PW(U,1)x} $\pi(P\cKW(U, 1))\Psi = \norm{\pi(PU)\Psi}\cdot\dfrac{\pi(P)\Psi}{\norm{\pi(P)\Psi}}$,
	\item\label{ite:W(u,s)_norm} $\norm{\1 - \cKW(U,s)} \leq (1+\beta) \norm{\1 - U}$,
\end{enumerate}
\end{lem}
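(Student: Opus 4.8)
The plan is to reduce the whole statement to the corner $P\fA P$ and rotate inside it using the continuous families of unitaries already built. Fix $U \in Z_{P,\gamma,\delta}$ and consider the unit vectors $\Phi_1(U) \defeq \pi(PU)\Psi/\norm{\pi(PU)\Psi}$ and $\Phi_2 \defeq \pi(P)\Psi/\norm{\pi(P)\Psi}$ of $\pi(P)\cH$; since $\norm{\pi(PU)\Psi} \geq \gamma > 0$, the map $U \mapsto \Phi_1(U)$ is norm-continuous. From $\norm{\pi(PU)\Psi - \pi(P)\Psi} = \norm{\pi(P)(\pi(U)-\1)\Psi} \leq \norm{\1-U} < \delta$ and Lemma~\ref{lem:normalization_bound} one obtains $\norm{\Phi_1(U)-\Phi_2}^2 \leq \norm{\1-U}^2/(\gamma\norm{\pi(P)\Psi})$, which is $< 2$ by the first inequality of~\eqref{eq:beta_gamma_delta_condition}; hence $\Re\langle\Phi_1(U),\Phi_2\rangle = 1 - \tfrac12\norm{\Phi_1(U)-\Phi_2}^2 > 0$, and in particular $\langle\Phi_1(U),\Phi_2\rangle \neq 0$ for every $U \in Z_{P,\gamma,\delta}$.

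Next I would apply Lemma~\ref{lem:compressed_rep}: $(\pi(P)\cH,\pi_P)$ is a nonzero irreducible representation of the unital $C^*$-algebra $P\fA P$ (with unit $P$), so Corollary~\ref{cor:U_extension} yields a continuous map $\cU_P$ on pairs of unit vectors of $\pi(P)\cH$ with nonvanishing inner product satisfying $\pi_P(\cU_P(\eta_1,\eta_2))\eta_1 = \eta_2$ and $\norm{P - \cU_P(\eta_1,\eta_2)} \leq \norm{\eta_1-\eta_2} + \abs{1 - \langle\eta_1,\eta_2\rangle/\abs{\langle\eta_1,\eta_2\rangle}}$. Put $V(U) \defeq \cU_P(\Phi_1(U),\Phi_2) \in \Unitary(P\fA P)$ and bound the two summands: the first is $\leq \norm{\1-U}/\sqrt{\gamma\norm{\pi(P)\Psi}}$ by the previous paragraph; the second, with $z \defeq \langle\Phi_1(U),\Phi_2\rangle$, satisfies $\abs{1 - z/\abs z}^2 = 2(\abs z-\Re z)/\abs z \leq 2(1-\Re z)$ (using $\abs z \leq 1$ and $\Re z > 0$), and since $1-\Re z = \tfrac12\norm{\Phi_1(U)-\Phi_2}^2 \leq \norm{\1-U}^2/(2\gamma\norm{\pi(P)\Psi}) \leq \norm{\1-U}^2/(2\gamma\norm{\pi(P)\Psi}-\delta^2)$, the term $\abs{1-z/\abs z}$ is at most $\sqrt{2/(2\gamma\norm{\pi(P)\Psi}-\delta^2)}\,\norm{\1-U}$. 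Adding the two bounds gives precisely $\norm{P - V(U)} \leq \beta\norm{\1-U}$, and the second inequality of~\eqref{eq:beta_gamma_delta_condition} makes this $<\beta\delta<2$.

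Since $\norm{\1 - (V(U)+(\1-P))} = \norm{P - V(U)} < 2$, I can set $B(U) \defeq -i\Log(V(U)+(\1-P))$ with the principal branch; it is norm-continuous into $\fA_\tn{sa}$, satisfies $B(U) = PB(U)P$ and $\norm{B(U)} < \pi$, and obeys $e^{iB(U)} = V(U)+(\1-P)$. Define $\cKW(U,s) \defeq e^{isB(U)}U$. Part~\ref{ite:W(U,0)} is immediate. From $B(U) = PB(U)P$ we get $(\1-P)e^{isB(U)} = \1-P$, so $(\1-P)\cKW(U,s) = (\1-P)U$, which is part~\ref{ite:(1-P)W(U,s)}. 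The element $Pe^{isB(U)}P$ is a unitary of $P\fA P$, so $\pi_P(Pe^{isB(U)}P)$ is unitary on $\pi(P)\cH$; since $\pi(P\cKW(U,s))\Psi = \pi_P(Pe^{isB(U)}P)\pi(PU)\Psi$ and $\pi(PU)\Psi \in \pi(P)\cH$, part~\ref{ite:PW(U,s)x} follows, and at $s=1$ that unitary equals $V(U)$, whence $\pi(P\cKW(U,1))\Psi = \norm{\pi(PU)\Psi}\,\pi_P(\cU_P(\Phi_1(U),\Phi_2))\Phi_1(U) = \norm{\pi(PU)\Psi}\Phi_2$, giving part~\ref{ite:PW(U,1)x}. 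Finally $\norm{\1-\cKW(U,s)} \leq \norm{\1-e^{isB(U)}} + \norm{\1-U}$, and since $\sigma(B(U)) \subset (-\pi,\pi)$ the function $s \mapsto \norm{\1-e^{isB(U)}}$ is nondecreasing, hence $\leq \norm{\1-e^{iB(U)}} = \norm{P-V(U)} \leq \beta\norm{\1-U}$; this is part~\ref{ite:W(u,s)_norm}, and joint continuity of $\cKW$ follows by composing the continuity statements collected along the way.

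The main obstacle I anticipate is corner bookkeeping rather than hard analysis: one must verify that $\pi_P$ is genuinely a nonzero irreducible representation of $P\fA P$ and that elements of $P\fA P$ act on $\pi(P)\cH$ exactly as $\pi_P$ prescribes, that $\Log(V(U)+(\1-P))$ really is supported on $P$, and---most fiddly---that the two norm estimates are arranged to reproduce the exact constant $\beta$ of~\eqref{eq:beta_def} and not a larger one. The hypothesis $\beta\delta<2$ serves the sole purpose of keeping $V(U)+(\1-P)$ within distance $2$ of $\1$, so that the principal logarithm is available.
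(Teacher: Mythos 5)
Your proposal is correct and follows essentially the same route as the paper: normalize $\pi(PU)\Psi$ and $\pi(P)\Psi$, check their inner product is nonzero via the normalization bound and \eqref{eq:beta_gamma_delta_condition}, apply Corollary \ref{cor:U_extension} in the compressed irreducible representation $(\pi(P)\cH,\pi_P)$ of $P\fA P$ from Lemma \ref{lem:compressed_rep}, extend the corner unitary by $\1-P$, take the principal logarithm (valid since the estimate gives distance $\leq\beta\norm{\1-U}<\beta\delta<2$ from the identity), and set $\cKW(U,s)=e^{isB(U)}U$; the verifications of (a)--(e), including the exact constant $\beta$, match the paper's.
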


\begin{proof}
For any $U \in \Unitary(\fA)$ such that $\pi(PU)\Psi \neq 0$, define
\[
\Omega_U = \frac{\pi(PU)\Psi}{\norm{\pi(PU)\Psi}}.
\]
Let us first compute a few  estimates. For any $U \in Z_{P,\gamma,\delta}$, Lemma \ref{lem:normalization_bound} yields
\begin{equation}\label{eq:y_U-y_1}
\norm{\Omega_U - \Omega_\1}^2 \leq \frac{\norm{\pi(PU)\Psi - \pi(P)\Psi}^2}{\norm{\pi(PU)\Psi}\norm{\pi(P)\Psi}} \leq \frac{\norm{\1 - U}^2}{\gamma \norm{\pi(P)\Psi}}.
\end{equation}
Next, for any $U \in Z_{P,\gamma,\delta}$, we estimate
\begin{equation}\label{eq:|y_U,y_1|}
\abs{\ev{\Omega_U, \Omega_\1}} \geq \Re \ev{\Omega_U, \Omega_\1} = 1 - \frac{1}{2}\norm{\Omega_U - \Omega_\1}^2 > 1 - \frac{\delta^2}{2\gamma \norm{\pi(P)\Psi}} > 0.
\end{equation}
Finally, for any $U \in Z_{P,\gamma,\delta}$, Lemma \ref{lem:normalization_bound} once again yields
\begin{equation}
\abs{1 - \frac{\ev{\Omega_U, \Omega_\1}}{\abs{\ev{\Omega_U, \Omega_\1}}}}^2 \leq \frac{\abs{1 - \ev{\Omega_U, \Omega_\1}}^2}{\abs{\ev{\Omega_U, \Omega_\1}}} \leq \frac{\norm{\Omega_U - \Omega_\1}^2}{\abs{\ev{\Omega_U, \Omega_\1}}} < \frac{2\norm{\1 - U}^2}{2\gamma \norm{\pi(P)\Psi} - \delta^2},
\end{equation}
where we have used \eqref{eq:y_U-y_1} and \eqref{eq:|y_U,y_1|} in the last step.

Now let $\pi_P:P\fA P \rightarrow \B(\pi(P)\hilbH)$ be the compression of $\pi$ by $P$, which is a nonzero irreducible representation of $P\fA P$ by Lemma \ref{lem:compressed_rep}. Since $\ev{\Omega_U, \Omega_\1} \neq 0$ for every $U \in Z_{P,\gamma,\delta}$, Lemma \ref{cor:U_extension} yields a continuous map $\cU':Z_{P,\gamma,\delta} \rightarrow \Unitary(P\fA P)$ such that
\[
\pi_P(\cU'(U))\Omega_U = \Omega_\1
\]
and
\begin{align*}
\norm{P - \cU'(U)} &\leq \norm{\Omega_U - \Omega_\1} + \abs{1 - \frac{\ev{\Omega_U, \Omega_\1}}{\abs{\ev{\Omega_U, \Omega_\1}}}} \leq \beta\norm{\1 - U} < 2.
\end{align*}

We then define a continuous map $\cU'': Z_{P,\gamma, \delta} \rightarrow \Unitary(\fA)$ by
\[
\cU''(U) = \1 - P + \cU'(U).
\]
It is easy to check that this is indeed unitary. Furthermore,
\[
\norm{\1 - \cU''(U)} = \norm{P - \cU'(U)} < 2
\]
for all $U \in Z_{P,\gamma, \delta}$, so we can apply the principal branch of the logarithm to $\cU''(U)$ to obtain a continuous function $A:Z_{P,\gamma, \delta} \rightarrow \fA_\tn{sa}$, $A(U) = -i\Log \cU''(U)$ satisfying $e^{iA(U)} = \cU''(U)$ and $\norm{A(U)} < \pi$ for all $U \in Z_{P,\gamma, \delta}$.

We define $\cKW:Z_{P,\gamma, \delta} \times I \rightarrow \Unitary(\fA)$ by
\[
\cKW(U,s) = e^{isA(U)}U
\]
which is manifestly continuous. It is clear that \ref{ite:W(U,0)} holds. Since $\cU'(U) \in P\fA P$, we have $(\1 - P)\cU''(U) = (\1 - P)$, from which it follows that $(\1 - P)A(U) = 0$, hence $(\1 - P)e^{isA(U)} = (\1 - P)$ for all $U \in Z_{P,\gamma, \delta}$ and $s \in I$. Thus, \ref{ite:(1-P)W(U,s)} holds. 

From $(\1 - P)A(U) = 0$ it follows that $[P, A(U)] = 0$, hence $[P,e^{isA(U)}] = 0$ for all $U \in Z_{P,\gamma, \delta}$ and $s \in I$. Thus,
\begin{align*}
\norm{\pi(P\cKW(U,s))\Psi} = \norm{\pi(e^{isA(U)}PU)\Psi} = \norm{\pi(PU)\Psi},
\end{align*}
proving \ref{ite:PW(U,s)x}. Since $P\cU''(U) = \cU'(U) = \cU'(U)P$, we also have
\[
\pi(P\cKW(U,1))\Psi = \pi(\cKU'(U)U)\Psi = \norm{\pi(PU)\Psi} \pi_P(\cKU'(U))\Omega_U = \norm{\pi(PU)\Psi}\Omega_\1,
\]
for all $U \in Z_{P,\gamma, \delta}$, proving \ref{ite:PW(U,1)x}. Finally,
\begin{align*}
\norm{\1 - \cKW(U,s)} &\leq   \norm{\1 - U} + \norm{\1 - e^{isA(U)}}\\
&\leq  \norm{\1 - U} + \norm{\1 - \cU''(U)} \\
&\leq (1 + \beta)\norm{\1 - U},
\end{align*}
proving \ref{ite:W(u,s)_norm}.
\end{proof}

\section{Proof of the Equi-\texorpdfstring{$LC^n$}{LCn} Condition}
\label{sec:equiLCn}

Let $\fA$ be a unital $C^*$-algebra, let $P \in \fA$ be a projection, let $\psi \in \sP(\fA)$, and let $t \in [0,1]$. Define
\[
S(\psi, t) = \qty{U \in \Unitary(\fA): \psi(U^*PU) \geq t}.
\]
As preparation for using Theorem \ref{thm:finite-dim_Michael}, our goal in this section is to show that the collection $\cS$ of subsets $S(\psi, t)$ is equi-$LC^n$ for all $n$, when $\psi$ and $t$ range over suitable subsets of $\sP(\fA)$ and $I = [0,1]$, respectively. Using Proposition \ref{prop:equiLCn_topological_group}, it will suffice to show that $\cS$ is equi-$LC^n$ at $\1 \in \Unitary(\fA)$. Thus, for every $\varepsilon > 0$, we will need to find a $\delta > 0$ -- independent of $n$, $\psi$, and $t$ -- such that every continuous image of $\bbS^n$ in $B_\delta(\1) \cap S(\psi, t)$ is contractible in $B_\varepsilon(\1) \cap S(\psi, t)$. 

It will be convenient to work in the GNS representation $(\cH, \pi, \Psi)$ of $\psi$. Note that $S(\psi, t) = \{U \in \Unitary(\fA): \norm{\pi(PU)\Psi}^2 \geq t\}$. Given $\delta > 0$, we deform a family of unitaries in $B_\delta(\1) \cap S(\psi, t)$ to a point in two steps, while keeping the image of the deformation in $S(\psi, t)$ and in a small ball around $\1$. First, we deform the unitaries so that they always map $\Psi$ to a linear combination of two fixed unit vectors $\Omega \in \pi(P)\cH$ and $\Phi \in \pi(\1 - P)\cH$, where the coefficients of the linear combination are strictly positive. Lemma \ref{lem:steps_3_4_homotopy} then shows how to deform the resulting family of unitaries to a point using the results of Section \ref{sec:continuous_unitaries}.

For the first step described above, a different approach needs to be taken depending on whether $\norm{\pi(P)\Psi}$ is ``large'' or ``small.'' The case when $\norm{\pi(P)\Psi}$ is large is more straightforward and can be handled with Lemma \ref{lem:ZPgammadelta}; this is done in  Lemma \ref{lem:normsq_pi(P)x>2delta}. Lemma \ref{lem:normsq_pi(P)x<2delta} shows that the case when $\norm{\pi(P)\Psi}$ is small can be reduced to the case where $\pi(P)\Psi = 0$, which is dealt with in Lemma \ref{lem:pi(P)x=0}. This case is more interesting and requires the assumption that $\pi(P)\cH$ is infinite-dimensional. The latter entails that the unit sphere $\bbS \pi(P)\cH$ is contractible, whence the homotopy lifting property of a fiber bundle $\Unitary(P\fA P) \rightarrow \bbS \pi(P)\cH$ will allow us to deform the family of unitaries as desired.

We begin with the following lemma which will take care of arguments common to both cases.

\begin{lem}\label{lem:steps_3_4_homotopy}
Let $\fA$ be a unital $C^*$-algebra, let $P \in \fA$ be a projection, and let $(\cH, \pi)$ be a nonzero irreducible representation of $\fA$. Furthermore, let $\Phi,\Psi, \Omega \in \bbS \cH$ such that $\ev{\Psi,\Omega} \geq 0$, $\ev{\Phi, \Psi} > 0$, and $\ev{\Phi,\Omega} = 0$. Let $t \in [0,1)$. Define
\begin{equation}\label{eq:R_def}
R = \qty{U \in \Unitary(\fA): \ev{\Omega, \pi(U)\Psi} \in [\sqrt{t},1) \tn{ and } \ev{\Phi, \pi(U)\Psi} = \sqrt{1 - \ev{\Omega,\pi(U)\Psi}^2} }.
\end{equation}
Given $\zeta \in (0,1]$, either $B_\zeta(\1) \cap R = \varnothing$ or the inclusion map $B_\zeta(\1) \cap R \rightarrow B_{3\zeta}(\1) \cap R$ is nulhomotopic.
\end{lem}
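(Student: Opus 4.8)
The plan is to contract $B_\zeta(\1)\cap R$ to a point inside $B_{3\zeta}(\1)\cap R$ in two stages, so assume $B_\zeta(\1)\cap R\neq\varnothing$ and fix a base point $U_0\in B_\zeta(\1)\cap R$. The key structural observation about $R$ is a rigidity statement: for $U\in R$, writing $r_U=\ev{\Omega,\pi(U)\Psi}\in[\sqrt t,1)$, the unit vector $\pi(U)\Psi$ has component $r_U$ along $\Omega$ and component $\sqrt{1-r_U^2}$ along $\Phi$, and since $\Omega\perp\Phi$ and $r_U^2+(1-r_U^2)=1=\norm{\pi(U)\Psi}^2$ nothing is left over, so in fact $\pi(U)\Psi=\eta(r_U)$ where $\eta\colon[\sqrt t,1)\to\bbS\cH$ is defined by $\eta(r)=r\Omega+\sqrt{1-r^2}\,\Phi$. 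Hence $\ev{\Psi,\pi(U)\Psi}=r_U\ev{\Psi,\Omega}+\sqrt{1-r_U^2}\,\ev{\Psi,\Phi}\geq\sqrt{1-r_U^2}\,\ev{\Phi,\Psi}>0$ (using $\ev{\Psi,\Omega}\geq0$, $\ev{\Phi,\Psi}>0$, and $r_U<1$) and $\ev{\Psi,\pi(U)\Psi}\leq1$ by Cauchy--Schwarz; together with $\norm{\1-U}<\zeta\leq1$ this shows that $B_\zeta(\1)\cap R$ is contained in the set $Z$ appearing in Lemma~\ref{lem:V_deform_to_U} and that $(\Psi,\eta(r))$ lies in the set $Y_+$ of Theorem~\ref{thm:unitary_Kadison_lemma} for every $r\in[\sqrt t,1)$.

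\emph{Stage 1: straighten each unitary to canonical form.} Apply the homotopy $\cKV$ of Lemma~\ref{lem:V_deform_to_U}: it runs from $\cKV(U,0)=U$ to $\cKV(U,1)=\cKU(\Psi,\pi(U)\Psi)$, it keeps $\pi(\cKV(U,s))\Psi=\pi(U)\Psi$ for all $s$, and it satisfies $\norm{\1-\cKV(U,s)}\leq 3\norm{\1-U}<3\zeta$. Since $\pi(\cKV(U,s))\Psi=\pi(U)\Psi=\eta(r_U)$ throughout, each $\cKV(U,s)$ still lies in $R$, and the norm bound places it in $B_{3\zeta}(\1)$. Thus $\cKV$ is a homotopy in $B_{3\zeta}(\1)\cap R$ from the inclusion $B_\zeta(\1)\cap R\hookrightarrow B_{3\zeta}(\1)\cap R$ to the map $U\mapsto\cKU(\Psi,\eta(r_U))$, where $\cKU$ is as in Theorem~\ref{thm:unitary_Kadison_lemma}.

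\emph{Stage 2: collapse the remaining arc.} The map $U\mapsto\cKU(\Psi,\eta(r_U))$ depends on $U$ only through $r_U\in[\sqrt t,1)$, so I collapse it by linearly sliding $r_U$ to the fixed value $r_\ast:=r_{U_0}$, i.e.\ setting $G(U,s)=\cKU\bigl(\Psi,\eta((1-s)r_U+sr_\ast)\bigr)$. Convexity of the interval $[\sqrt t,1)$ keeps the argument in $[\sqrt t,1)$, so $\pi(G(U,s))\Psi=\eta((1-s)r_U+sr_\ast)$ has the correct components along $\Omega$ and $\Phi$ and hence $G(U,s)\in R$. For the norm, an elementary computation shows $\ev{\Psi,\eta(r)}=r\ev{\Psi,\Omega}+\sqrt{1-r^2}\,\ev{\Psi,\Phi}$ is a concave function of $r$, so $\norm{\Psi-\eta(r)}^2=2-2\ev{\Psi,\eta(r)}$ is convex in $r$, and therefore
\[
\norm{\1-G(U,s)}=\norm{\Psi-\eta\bigl((1-s)r_U+sr_\ast\bigr)}\leq\max\bigl(\norm{\Psi-\eta(r_U)},\norm{\Psi-\eta(r_\ast)}\bigr)<\zeta,
\]
where $\norm{\Psi-\eta(r_U)}=\norm{\Psi-\pi(U)\Psi}\leq\norm{\1-U}<\zeta$ holds for every $U\in B_\zeta(\1)\cap R$, in particular for $U_0$, which gives $\norm{\Psi-\eta(r_\ast)}<\zeta$. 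So $G$ is a homotopy inside $B_\zeta(\1)\cap R\subset B_{3\zeta}(\1)\cap R$ from $U\mapsto\cKU(\Psi,\eta(r_U))$ to the constant map $U\mapsto\cKU(\Psi,\eta(r_\ast))$. Concatenating the Stage~1 and Stage~2 homotopies yields the required nulhomotopy of the inclusion $B_\zeta(\1)\cap R\hookrightarrow B_{3\zeta}(\1)\cap R$.

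The only delicate point is the choice of target in Stage~2: one must slide toward the $\eta$-value of a point already lying in $B_\zeta(\1)\cap R$ (rather than toward, say, $\eta(\sqrt t)$), so that the convexity estimate confines the entire second stage to $B_\zeta(\1)$; the whole factor $3$ in the radius is then consumed in Stage~1 through Lemma~\ref{lem:V_deform_to_U}. Continuity of both homotopies is immediate from continuity of $\cKU$ and $\cKV$, of $U\mapsto r_U$, and of $\eta$ on $[\sqrt t,1)$, on which $\sqrt{1-r^2}>0$, so that $\eta$ and $r\mapsto(\Psi,\eta(r))\in Y_+$ are well defined and continuous.
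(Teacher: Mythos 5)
Your proof is correct and follows essentially the same two-stage strategy as the paper: first use Lemma \ref{lem:V_deform_to_U} to deform the inclusion to $U \mapsto \cKU(\Psi,\pi(U)\Psi)$ inside $B_{3\zeta}(\1)\cap R$, then contract that map to a constant via Theorem \ref{thm:unitary_Kadison_lemma}. The only (harmless) difference is in the second stage: the paper interpolates linearly between $\pi(U)\Psi$ and $\pi(U_0)\Psi$ in $\cH$ and renormalizes, controlling the norm via Lemma \ref{lem:normalization_bound}, whereas you slide the parameter $r$ along the arc $\eta(r)=r\Omega+\sqrt{1-r^2}\,\Phi$ and use convexity of $r\mapsto\norm{\Psi-\eta(r)}^2$, which even keeps the second stage inside $B_\zeta(\1)$.
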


\begin{proof}
Assume $B_\zeta(\1) \cap R \neq \varnothing$. Observe that for any $U \in R$, the inner products $\ev{\Omega,\pi(U)\Psi}$ and $\ev{\Phi,\pi(U)\Psi}$ square to one, and since $U \in \Unitary(\fA)$ implies $\norm{\pi(U)\Psi} = 1$, we have 
\begin{equation}\label{eq:R_expanded}
\pi(U)\Psi = \ev{\Omega,\pi(U)\Psi}\Omega + \ev{\Phi,\pi(U)\Psi}\Phi.
\end{equation}
Also note that $\ev{\Phi, \pi(U)\Psi}$ is strictly positive.

Let $Y_+$ and $\cU:Y_+ \rightarrow \Unitary(\fA)$ be as in Theorem \ref{thm:unitary_Kadison_lemma} and let $Z$ and $\cKV:Z \times I \rightarrow \Unitary(\fA)$ be as in Lemma \ref{lem:V_deform_to_U}. Since $U \in R$ implies
\[
\ev{\Psi, \pi(U)\Psi} = \ev{\Omega, \pi(U)\Psi}\ev{\Psi,\Omega} +  \ev{\Phi,\pi(U)\Psi}\ev{\Phi,\Psi} > 0,
\]
we see that $B_\zeta(\1) \cap R \subset Z$. Observe further that for all $(U, s) \in (B_\zeta(\1) \cap R) \times I$, part \ref{ite:V(U,s)x} of Lemma \ref{lem:V_deform_to_U} implies $\cKV(U,s) \in R$ and part \ref{ite:V(U,s)_norm} of Lemma \ref{lem:V_deform_to_U} implies $\cKV(U,s) \in B_{3\zeta}(\1)$. Thus, restricting the domain and range of $\cKV$ gives a homotopy $\cKV:(B_\zeta(\1) \cap R) \times I \rightarrow B_{3\zeta}(\1) \cap R$. By parts \ref{ite:V(U,0)} and \ref{ite:V(U,1)} of Lemma \ref{lem:V_deform_to_U}, we have $\cKV(U,0) = U$ and $\cKV(U,1) = \cU(\Psi, \pi(U)\Psi)$ for all $U \in B_\zeta(\1) \cap R$. Thus, the inclusion is homotopic to $U \mapsto \cU(\Psi, \pi(U)\Psi)$.

Now fix an arbitrary $U_0 \in B_\zeta(\1) \cap R$. Given $(U,s) \in (B_\zeta(\1) \cap R) \times I$, define
\[
\Upsilon_{U,s} = s\pi(U_0)\Psi + (1 - s)\pi(U)\Psi.
\]
By the triangle inequality we know $\norm{\Upsilon_{U,s}} \leq 1$. Since $\ev{\Phi,\Omega} = 0$, it is straightforward to see from \eqref{eq:R_expanded} that $\ev{\pi(U_0)\Psi, \pi(U)\Psi}$ is real and strictly positive for all $U \in R$. Thus,
\begin{align*}
\norm{\Upsilon_{U,s}} &= \sqrt{s^2 + (1 - s)^2 + 2s(1 - s)\Re \ev{\pi(U_0)\Psi, \pi(U)\Psi}} \\
&\geq \sqrt{s^2 + (1 - s)^2} \geq \frac{1}{\sqrt{2}}.
\end{align*}
Given $(U,s) \in (B_\zeta(\1) \cap R) \times I$, define 
\[
\widehat\Upsilon_{U,s} = \frac{\Upsilon_{U,s}}{\norm{\Upsilon_{U,s}}}
\]
and note that $(\Psi, \widehat\Upsilon_{U,s}) \in Y_+$.

Define $H:(B_\zeta(\1) \cap R) \times I \rightarrow B_{3\zeta}(\1) \cap R$ by
\begin{align*}
H(U,s) = \cU(\Psi,  \widehat\Upsilon_{U,s}).
\end{align*}
Let us make sure that $\cU(\Psi, \widehat\Upsilon_{U,s}) \in B_{3\zeta}(\1) \cap R$. We know $\pi(\cU(\Psi,\widehat\Upsilon_{U,s}))\Psi = \widehat\Upsilon_{U,s}$ by Theorem \ref{thm:unitary_Kadison_lemma}, hence 
\[
\ev{\Omega, \pi(\cU(\Psi, \widehat\Upsilon_{U,s}))\Psi} = \frac{s\ev{\Omega, \pi(U_0)\Psi} + (1 - s)\ev{\Omega,\pi(U)\Psi}}{\norm{\Upsilon_{U,s}}} \geq \sqrt{t}.
\]
We may similarly expand $\ev{\Phi, \pi(\cU(\Psi,\widehat\Upsilon_{U,s}))\Psi}$ and observe $\ev{\Phi, \pi(\cU(\Psi, \widehat\Upsilon_{U,s}))\Psi} > 0$. Since $\widehat\Upsilon_{U,s}$ is a unit vector in the span of $\Phi$ and $\Omega$, it follows that $\cU(\Psi, \widehat\Upsilon_{U,s}) \in R$ for all $(U,s) \in (B_\zeta(\1) \cap R) \times I$. Furthermore, by Theorem \ref{thm:unitary_Kadison_lemma} and Lemma \ref{lem:normalization_bound} we know
\begin{align*}
\norm{\1 - \cU(\Psi,\widehat\Upsilon_{U,s})} &= \norm{\Psi - \widehat\Upsilon_{U,s}} \leq \frac{\norm{\Psi - \Upsilon_{U,s}}}{\sqrt{\norm{\Upsilon_{U,s}}}}\\
&\leq 2^{1/4} \qty(s \norm{\Psi - \pi(U_0)\Psi} + (1 - s)\norm{\Psi - \pi(U)\Psi})\\
&< 2^{1/4}\zeta < 3\zeta,
\end{align*}
so indeed $\cU(\Psi, \widehat\Upsilon_{U,s}) \in B_{3\zeta}(\1)$ for all $(U,s) \in (B_\zeta(\1) \cap R) \times I$. Thus, $H$ is well defined, and it is  manifestly continuous. At $s = 0$, we have $H(U,0) = \cU(\Psi, \pi(U)\Psi) = \cKV(U,1)$ and at $s = 1$, we have $H(U,1) = \cU(\Psi, \pi(U_0)\Psi)$, which is independent of $U$. Thus, the map $U \mapsto \cU(\Psi, \pi(U)\Psi)$ is nulhomotopic, hence the inclusion is nulhomotopic.
\end{proof}

We proceed with the case when $\norm{\pi(P)\Psi}$ is ``large.''

\begin{lem}\label{lem:normsq_pi(P)x>2delta}
Let $\fA$ be a unital $C^*$-algebra, let $P \in \fA$ be a projection, and let $(\cH, \pi)$ be a nonzero irreducible representation of $\fA$. Furthermore, let $\Psi \in \bbS \cH$, let $t \in [0,1/4]$, and define
\[
S = \qty{U \in \Unitary(\fA): \norm{\pi(PU)\Psi}^2 \geq t}.
\]
Let $\delta \in (0,1/1296)$ and assume $\norm{\pi(P)\Psi}^2 \geq 2\delta$. Either $B_\delta(\1) \cap S = \varnothing$ or the inclusion $B_\delta(\1) \cap S \rightarrow B_{39\sqrt{\delta}}(\1) \cap S$ is nulhomotopic.
\end{lem}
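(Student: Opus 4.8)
The plan is to carry out the two‑step strategy outlined at the start of this section: align the $P$‑ and $(\1-P)$‑components of $\pi(U)\Psi$ using Lemma~\ref{lem:ZPgammadelta} first for $P$ and then for $\1-P$, so as to land in a set $R$ of the form appearing in Lemma~\ref{lem:steps_3_4_homotopy}, and then apply that lemma. Write $B=B_\delta(\1)$ and assume $B\cap S\neq\varnothing$, the empty case being the first alternative. First I would dispose of two degenerate configurations. Picking $U_\ast\in B\cap S$ and using that $\pi$ is norm‑decreasing, $\sqrt t\le\norm{\pi(PU_\ast)\Psi}\le\norm{\pi(P)\Psi}+\norm{\1-U_\ast}<\norm{\pi(P)\Psi}+\delta$. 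If $\norm{\pi(P)\Psi}\ge\sqrt t+\delta$, then $\norm{\pi(PU)\Psi}\ge\norm{\pi(P)\Psi}-\norm{\1-U}>\sqrt t$ for all $U\in B$, so $B\cap S=B$; since $\delta<2$, each $U\in B$ has $A:=-i\Log U$ well defined with $\norm A<\pi$, and $(U,s)\mapsto e^{i(1-s)A}$ contracts $B$ inside itself (as $\norm{e^{i(1-s)A}-\1}\le\norm{U-\1}<\delta$), so the inclusion $B\cap S\to B_{39\sqrt\delta}(\1)\cap S$ is nulhomotopic. Hence I may assume $\norm{\pi(P)\Psi}<\sqrt t+\delta\le\tfrac12+\delta$, using $t\le1/4$.

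This inequality is the crux: it forces $\norm{\pi(\1-P)\Psi}^2=1-\norm{\pi(P)\Psi}^2>\tfrac34-\delta-\delta^2$, so $\pi(\1-P)\Psi\neq0$ and in fact $\norm{\pi(\1-P)\Psi}>4/5$ once $\delta<1/1296$; that is, in the only regime where the assertion is not immediate, the $(\1-P)$‑component of $\Psi$ is bounded below by a universal constant, so its direction can be fixed at cost $O(\sqrt\delta)$. Put $\Omega=\pi(P)\Psi/\norm{\pi(P)\Psi}$ and $\Phi=\pi(\1-P)\Psi/\norm{\pi(\1-P)\Psi}$. Then $\Omega\in\pi(P)\cH$, $\Phi\in\pi(\1-P)\cH$, $\ev{\Psi,\Omega}=\norm{\pi(P)\Psi}\ge0$, $\ev{\Phi,\Psi}=\norm{\pi(\1-P)\Psi}>0$ and $\ev{\Phi,\Omega}=0$, so $(\Phi,\Psi,\Omega)$ satisfies the hypotheses of Lemma~\ref{lem:steps_3_4_homotopy}; moreover the corresponding $R$ of \eqref{eq:R_def} lies in $S$, since $U\in R$ gives $\pi(U)\Psi=\ev{\Omega,\pi(U)\Psi}\Omega+\ev{\Phi,\pi(U)\Psi}\Phi$ with $\ev{\Omega,\pi(U)\Psi}\ge\sqrt t$, whence $\norm{\pi(PU)\Psi}=\ev{\Omega,\pi(U)\Psi}\ge\sqrt t$.

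Step~1: apply Lemma~\ref{lem:ZPgammadelta} to $P$ with $\gamma=\sqrt{2\delta}-\delta$ and radius $\delta$. The hypothesis $\norm{\pi(P)\Psi}^2\ge2\delta$ gives $\norm{\pi(PU)\Psi}\ge\norm{\pi(P)\Psi}-\delta\ge\gamma$ on $B$, and a direct estimate shows \eqref{eq:beta_gamma_delta_condition} holds for $\delta<1/1296$, with $\beta=O(\delta^{-1/2})$. The resulting $\cKW$ restricts to a homotopy inside $B_{c_1\sqrt\delta}(\1)\cap S$, where $c_1\sqrt\delta=(1+\beta)\delta$, from the inclusion of $B\cap S$ to $U\mapsto W_1(U):=\cKW(U,1)$; along it $\norm{\pi(P\,\cdot\,)\Psi}$ is constant (so we stay in $S$) and at the end $\pi(PW_1(U))\Psi=\norm{\pi(PU)\Psi}\,\Omega$. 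Step~2: since $\pi(W_1(U))\Psi$ is a unit vector, $\norm{\pi((\1-P)W_1(U))\Psi}=\sqrt{1-\norm{\pi(PU)\Psi}^2}>4/5$ (using $\norm{\pi(PU)\Psi}<\tfrac12+2\delta$) and $\norm{\1-W_1(U)}<c_1\sqrt\delta$; so apply Lemma~\ref{lem:ZPgammadelta} again, now to $\1-P$ with $\gamma'=4/5$ and radius $\delta'=c_1\sqrt\delta$. Here $\gamma'\norm{\pi(\1-P)\Psi}$ is bounded below by a constant while $\delta'$ is small, so \eqref{eq:beta_gamma_delta_condition} holds with $\beta'=O(1)$, and we obtain a homotopy from $W_1(U)$ to $W_2(U)$ inside $B_{c_2\sqrt\delta}(\1)\cap S$, with $c_2\sqrt\delta=(1+\beta')c_1\sqrt\delta$, keeping the $P$‑component equal to $\norm{\pi(PU)\Psi}\,\Omega$ and ending with $\pi((\1-P)W_2(U))\Psi=\sqrt{1-\norm{\pi(PU)\Psi}^2}\,\Phi$. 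Thus $\pi(W_2(U))\Psi=\norm{\pi(PU)\Psi}\,\Omega+\sqrt{1-\norm{\pi(PU)\Psi}^2}\,\Phi$, and since $\norm{\pi(PU)\Psi}\in[\sqrt t,1)$ this puts $W_2(U)$ in $R$.

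Step~3: as $\norm{\1-W_2(U)}<c_2\sqrt\delta\le1$, the map $U\mapsto W_2(U)$ takes $B\cap S$ into $B_{c_2\sqrt\delta}(\1)\cap R$, which is therefore nonempty, so Lemma~\ref{lem:steps_3_4_homotopy} with $\zeta=c_2\sqrt\delta$ makes the inclusion $B_{c_2\sqrt\delta}(\1)\cap R\to B_{3c_2\sqrt\delta}(\1)\cap R$ nulhomotopic. Concatenating (suitably reparametrized) the Step~1 and Step~2 homotopies with the image under $W_2$ of this nulhomotopy, and using $R\subseteq S$, produces a nulhomotopy of the inclusion $B\cap S\to B_{3c_2\sqrt\delta}(\1)\cap S$; the explicit estimates give $3c_2\le39$ for $\delta<1/1296$, which is the claim. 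The only real obstacle is the bookkeeping: verifying \eqref{eq:beta_gamma_delta_condition} in both applications of Lemma~\ref{lem:ZPgammadelta} and controlling $c_1$ and $c_2$ so that $3c_2\le39$ — the case distinction in the first paragraph is precisely what keeps the second application of Lemma~\ref{lem:ZPgammadelta} cheap.
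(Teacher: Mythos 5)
Your proposal is correct and takes essentially the same route as the paper: dispose of a degenerate case in which $B_\delta(\1)\cap S=B_\delta(\1)$ is contractible, then apply Lemma \ref{lem:ZPgammadelta} twice to align the $P$- and $(\1-P)$-components of $\pi(U)\Psi$ with $\Omega=\pi(P)\Psi/\norm{\pi(P)\Psi}$ and $\Phi=\pi(\1-P)\Psi/\norm{\pi(\1-P)\Psi}$, and finish with Lemma \ref{lem:steps_3_4_homotopy} on the set $R\subset S$. The only (harmless) deviations are your case threshold $\norm{\pi(P)\Psi}\geq\sqrt{t}+\delta$ in place of the paper's $\norm{\pi(P)\Psi}^2\geq t+2\delta$, and the reversed order of the two applications of Lemma \ref{lem:ZPgammadelta} (the expensive one for $P$ at radius $\delta$ first, then the cheap one for $\1-P$ at radius $O(\sqrt{\delta})$, versus the paper's $\1-P$ then $P$); the constants change but, as your estimates indicate, still land within $39\sqrt{\delta}$.
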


\begin{proof}
Assume $B_\delta(\1) \cap S \neq \varnothing$. We show the inclusion $B_\delta(\1) \cap S \rightarrow B_{39\sqrt{\delta}}(\1) \cap S$ is nulhomotopic.

 If $\norm{\pi(P)\Psi}^2 \geq t + 2\delta$, then for every $U \in B_\delta(\1)$ we have
\[
\norm{\pi(PU)\Psi}^2 \geq \norm{\pi(P)\Psi}^2 - 2\delta \geq t
\]
where the first inequality follows by writing the squared norm as an inner product and then approximating the two $U$'s by identities. Thus, we see that $U \in S$, hence $B_\delta(\1) \cap S = B_\delta(\1)$. The space $B_\delta(\1)$ is contractible since $\delta \leq 2$, hence the inclusion $B_\delta(\1) \cap S \rightarrow B_{39\sqrt{\delta}}(\1) \cap S$ is nulhomotopic.

That leaves the case $\norm{\pi(P)\Psi}^2 < t + 2\delta$. In this case, we have
\begin{align*}
\norm{\pi(\1 - P)\Psi}^2 = 1 - \norm{\pi(P)\Psi}^2 > 1 - t - 2\delta > \frac{16}{25},
\end{align*}
where the last step follows since $t < 1/4$ and $\delta < 11/200$.
For any $U \in B_\delta(\1)$, we have
\[
\norm{\pi(\1 - P)\pi(U)\Psi - \pi(\1 - P)\Psi} \leq \norm{U - \1} < \delta,
\]
so by the triangle inequality
\[
\norm{\pi(\1 - P)\pi(U)\Psi} > \frac{4}{5} - \delta > \frac{45}{64},
\]
where the last step follows since $\delta < 31/320$.
Thus, we see that $U \in B_\delta(\1)$ implies $U \in Z_{\1 - P, \gamma_{\1-P}, \delta}$, with $\gamma_{\1 - P} = 45/64$ (note that the roles of $P$ and $\1 - P$ have switched relative to the notation of Lemma \ref{lem:ZPgammadelta}). Using $\norm{\pi(\1 - P)\Psi} > 4/5$ and $\delta^2 < 81/200$, we get that $\beta_{\1 - P} < 3$, where $\beta_{\1 - P}$ is defined by \eqref{eq:beta_def}. Using $\delta^2 < 9/8$ and $\delta < 2/3$ we verify \eqref{eq:beta_gamma_delta_condition}.

Let $\cKW_{\1-P} :Z_{\1-P,\gamma_{\1 - P},\delta} \times I \rightarrow \Unitary(\fA)$ be the map from Lemma \ref{lem:ZPgammadelta}. Our first homotopy of the inclusion is the map $H_1:(B_\delta(\1) \cap S) \times I\rightarrow B_{\sqrt{\delta}}(\1) \cap S$ defined by
\[
H_1(U,s) = \cKW_{\1 - P}(U,s).
\]
By \ref{ite:(1-P)W(U,s)} of Lemma \ref{lem:ZPgammadelta} we know $PH_1(U,s) = PU$, so indeed $H_1(U,s) \in S$ for all $(U,s) \in (B_\delta(\1) \cap S) \times I$. By \ref{ite:W(u,s)_norm} of Lemma \ref{lem:ZPgammadelta} and the fact that $\beta_{\1 - P} < 3$ we know
\begin{equation}\label{eq:H_1_norm}
\norm{\1 - H_1(U,s)} \leq 4\norm{\1 - U} < 4\delta,
\end{equation}
so in particular we indeed have $H_1(U,s) \in B_{39\sqrt{\delta}}(\1)$ for all $(U,s) \in (B_\delta(\1) \cap S) \times I$. By \ref{ite:W(U,0)} of Lemma \ref{lem:ZPgammadelta} we know $H_1(U,0) = U$ for all $U \in B_\delta(\1) \cap S$. Thus, $H_1$ is a well-defined homotopy of the inclusion map. At the end of the homotopy, \ref{ite:PW(U,1)x} of Lemma \ref{lem:ZPgammadelta} yields
\[
\pi((\1 - P)H_1(U,1))\Psi = \norm{\pi((\1 - P)U)\Psi} \cdot \frac{\pi(\1 - P)\Psi}{\norm{\pi(\1 - P)\Psi}}
\]
for all $U \in B_\delta(\1) \cap S$.

For the next part of the homotopy, we follow a similar procedure with $P$ instead of $\1-P$. By the triangle inequality and the assumption $\norm{\pi(P)\Psi}^2 \geq 2\delta$, we have
\begin{equation}\label{eq:gamma_P}
\norm{\pi(PU)\Psi} \geq \norm{\pi(P)\Psi} - \delta \geq \sqrt{2\delta} - \delta.
\end{equation}
for every $U \in B_\delta(\1)$. Define $\gamma_P = \sqrt{2\delta} - \delta$ and note that $\gamma_P > 0$. For $U \in B_\delta(\1) \cap S$, \eqref{eq:H_1_norm} and \eqref{eq:gamma_P} imply $H_1(U,1) \in Z_{P, \gamma_P, 4\delta}$ since $PH_1(U,1) = PU$. Observe that $\delta < 1/2$ implies
\[
\gamma_P \norm{\pi(P)\Psi} \geq 2\delta - \delta \sqrt{2\delta} > \delta,
\]
hence
\[
\beta_P < \frac{1}{\sqrt{\delta}} + \sqrt{\frac{2}{2\delta - \delta^2}} < \frac{3}{\sqrt{\delta}},
\]
where $\beta_P$ is defined by \eqref{eq:beta_def} and we have used $\delta^2 < 3\delta/2$ in the second step. Using $\delta < 2$ and $\sqrt{\delta} < 2/3$ we verify \eqref{eq:beta_gamma_delta_condition}. 

Let $\cKW_P:Z_{P, \gamma_P, 4\delta} \times I \rightarrow \Unitary(\fA)$ be the map from Lemma \ref{lem:ZPgammadelta}. We define our second homotopy as $H_2:(B_\delta(\1) \cap S) \times I \rightarrow B_{39\sqrt{\delta}}(\1) \cap S$ by
\begin{align*}
H_2(U,s) = \cKW_P(H_1(U,1), s).
\end{align*}
By \ref{ite:PW(U,s)x} of Lemma \ref{lem:ZPgammadelta}, we have
\[
\norm{\pi(PH_2(U,s))\Psi} = \norm{\pi(PH_1(U,1))\Psi} = \norm{\pi(PU)\Psi},
\]
so indeed $H_2(U,s) \in S$ for all $(U,s) \in (B_\delta(\1) \cap S) \times I$. By \ref{ite:W(u,s)_norm} of Lemma \ref{lem:ZPgammadelta} we have for all $(U,s) \in (B_\delta(\1) \cap S) \times I$:
\begin{align*}
\norm{\1 - H_2(U,s)} &\leq \qty(1 + \frac{3}{\sqrt{\delta}})\norm{\1 - H_1(U,1)} \leq \qty(4 + \frac{12}{\sqrt{\delta}})\norm{\1 - U}  \\
&< 4\delta + 12 \sqrt{\delta} < 13\sqrt{\delta}
\end{align*}
where we have used $\sqrt{\delta} < 1/4$ in the last step. Thus, $H_2(U,s) \in B_{39\sqrt{\delta}}(\1)$. By \ref{ite:W(U,0)} of Lemma \ref{lem:ZPgammadelta}, we know $H_2(U,0)= H_1(U,1)$. This shows that $H_2$ is a well-defined homotopy of $H_1(\,\_\,,1)$. At the end of the homotopy, \ref{ite:(1-P)W(U,s)} and \ref{ite:PW(U,1)x} of Lemma \ref{lem:ZPgammadelta} yield
\begin{align}
\pi(H_2(U,1))\Psi &=  \pi(P\cKW_P(H_1(U,1),1))\Psi + \pi((\1 - P)\cKW_P(H_1(U,1), 1))\Psi\\
&= \norm{\pi(PU)\Psi} \cdot \frac{\pi(P)\Psi}{\norm{\pi(P)\Psi}} + \norm{\pi((\1 - P)U)\Psi} \cdot \frac{\pi(\1 - P)\Psi}{\norm{\pi(\1 - P)\Psi}}. \label{eq:H_2(U,1)x}
\end{align}

Finally, we observe that $H_2(\,\_ \,, 1) : B_\delta(\1) \cap S \rightarrow B_{39\sqrt{\delta}}(\1) \cap S$ has image in $B_{13\sqrt{\delta}}(\1) \cap R$, where $R$ is as defined in Lemma \ref{lem:steps_3_4_homotopy} with $\Phi,\Omega \in \bbS \cH$ defined as $\Omega = \pi(P)\Psi/\norm{\pi(P)\Psi}$ and $\Phi = \pi(\1 - P)\Psi/\norm{\pi(\1 - P)\Psi}$. Noting that $R \subset S$, we see that $H_2(\,\_\, , 1)$ factors as a composition
\[
B_{\delta}(\1) \cap S \rightarrow B_{13\sqrt{\delta}}(\1) \cap R \rightarrow B_{39\sqrt{\delta}(\1)} \cap R \rightarrow B_{39\sqrt{\delta}}(\1) \cap S,
\]
where the second and third arrow are inclusion maps. By Lemma \ref{lem:steps_3_4_homotopy} we know the middle arrow is nulhomotopic, so $H_2(\, \_\, ,1):B_\delta(\1) \cap S \rightarrow B_{39\sqrt{\delta}}(\1) \cap S$ is nulhomotopic, as desired.
\end{proof}

Next, we handle the case where $\pi(P)\Psi = 0$.

\begin{lem}\label{lem:pi(P)x=0}
Let $\fA$ be a unital $C^*$-algebra, let $P \in \fA$ be a projection, and let $(\cH, \pi)$ be a nonzero irreducible representation of $\fA$ such that the range $\pi(P)\cH$ is infinite-dimensional. Furthermore, let $\Psi \in \bbS \cH$ satisfy $\pi(P)\Psi = 0$, let $t \in [0,1)$, and define
\[
S = \qty{U \in \Unitary(\fA): \norm{\pi(PU)\Psi}^2 \geq t}.
\]
Given $\delta \in (0, 7/16)$, either $B_\delta(\1) \cap S = \varnothing$ or the inclusion  $B_\delta(\1) \cap S \rightarrow B_{12\delta}(\1) \cap S$ is nulhomotopic.
\end{lem}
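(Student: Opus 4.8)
First I would dispose of trivialities. If $t=0$ then $S=\Unitary(\fA)$, so $B_\delta(\1)\cap S=B_\delta(\1)$, which is contractible (it is star-shaped about $\1$ via $s\mapsto\exp((1-s)(-i\Log U))$, using $\delta<2$), and there is nothing to prove; so assume $t\in(0,1)$ and $B_\delta(\1)\cap S\neq\varnothing$. Since $\pi(P)\Psi=0$, every $U\in B_\delta(\1)$ satisfies $\norm{\pi(PU)\Psi}=\norm{\pi(PU)\Psi-\pi(P)\Psi}\le\norm{\1-U}<\delta$, so nonemptiness forces $t<\delta^2$; in particular $\sqrt t<\delta$. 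For $U\in B_\delta(\1)\cap S$ I would write $v_U=\pi(PU)\Psi$ and $w_U=\pi((\1-P)U)\Psi$, so $\pi(U)\Psi=v_U+w_U$ is the orthogonal decomposition into $\pi(P)\cH$- and $\pi(\1-P)\cH$-parts, with $\sqrt t\le\norm{v_U}<\delta$ and (again using $\pi(P)\Psi=0$) $\norm{w_U-\Psi}\le\norm{\1-U}<\delta$; set $\Omega_U=v_U/\norm{v_U}\in\bbS\pi(P)\cH$ and $\Phi_U=w_U/\norm{w_U}\in\bbS\pi(\1-P)\cH$. Finally I would fix a unit vector $\Omega\in\bbS\pi(P)\cH$ and, since $\pi(P)\cH$ is infinite-dimensional so that $\bbS\pi(P)\cH$ is contractible, fix a contraction $c\colon\bbS\pi(P)\cH\times I\to\bbS\pi(P)\cH$ with $c(\cdot,0)=\id$ and $c(\cdot,1)\equiv\Omega$. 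It is this contraction -- equivalently, the homotopy lifting property of the principal $\Unitary(P\fA P)_\Omega$-bundle $\Unitary(P\fA P)\to\bbS\pi(P)\cH$ of Corollary \ref{cor:principal_bundle} -- that will carry the one step genuinely requiring infinite-dimensionality.

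Next I would build a nulhomotopy of the inclusion as a concatenation $H_1*H_2*H_3*H_4*H_5$ of homotopies through $B_{12\delta}(\1)\cap S$, each arranged so that the moving unitary's image vector on $\Psi$ keeps $\pi(P)\cH$-part of norm $\ge\sqrt t$ (hence stays in $S$) and stays within $O(\delta)$ of $\Psi$ (hence in $B_{12\delta}(\1)$, via $\norm{\1-\cKU(\Psi,\eta)}=\norm{\Psi-\eta}$ from Theorem \ref{thm:unitary_Kadison_lemma}). Stage $H_1$: the phase normalization $s\mapsto e^{-is\alpha_U}U$ with $e^{i\alpha_U}=\ev{\Psi,\pi(U)\Psi}/\abs{\ev{\Psi,\pi(U)\Psi}}$; it leaves $\norm{v_U}$ unchanged, and since $\ev{\Psi,\pi(U)\Psi}$ lies in the disk of radius $\delta$ about $1$ and has modulus $\le1$ one gets $\abs{1-e^{-i\alpha_U}}<\sqrt{2-2\sqrt{1-\delta^2}}$, so the endpoint $e^{-i\alpha_U}U$ still satisfies $\norm{\1-e^{-i\alpha_U}U}<1$ (this is where $\delta<7/16$ is used) and now $\ev{\Psi,\pi(e^{-i\alpha_U}U)\Psi}>0$. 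Stage $H_2$: Lemma \ref{lem:V_deform_to_U}, applied to $e^{-i\alpha_U}U$, deforms it to the normal form $\cKU(\Psi,\pi(e^{-i\alpha_U}U)\Psi)$ at norm cost at most a factor $3$, while preserving the norm of the $\pi(P)\cH$-part. Stage $H_3$: apply $\cKU(\Psi,-)$ to the path of unit vectors obtained by rotating $\pi(e^{-i\alpha_U}U)\Psi$ in $\vecspan\{e^{-i\alpha_U}\Omega_U,e^{-i\alpha_U}\Phi_U\}$ so its $\pi(P)\cH$-weight decreases from $\norm{v_U}$ to exactly $\sqrt t$ (staying $\ge\sqrt t$), ending at $\sqrt t\,e^{-i\alpha_U}\Omega_U+\sqrt{1-t}\,e^{-i\alpha_U}\Phi_U$. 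Stage $H_4$: apply $\cKU(\Psi,-)$ to $s\mapsto\sqrt t\,c(e^{-i\alpha_U}\Omega_U,s)+\sqrt{1-t}\,e^{-i\alpha_U}\Phi_U$; since $c(e^{-i\alpha_U}\Omega_U,s)\in\bbS\pi(P)\cH\perp\Psi$, the $\pi(P)\cH$-weight stays $\sqrt t$ and $\ev{\Psi,-}$ is unchanged, and this is the step using infinite-dimensionality, ending at the $\Omega_U$-free vector $\sqrt t\,\Omega+\sqrt{1-t}\,e^{-i\alpha_U}\Phi_U$. Stage $H_5$: apply $\cKU(\Psi,-)$ to $s\mapsto\sqrt t\,\Omega+\sqrt{1-t}\,\chi_U(s)$, where $\chi_U$ is the minimal-rotation path of unit vectors in $\pi(\1-P)\cH$ from $e^{-i\alpha_U}\Phi_U$ to $\Psi$ (it never hits $0$ since $\ev{\Psi,e^{-i\alpha_U}\Phi_U}=\abs{\ev{\Psi,w_U}}/\norm{w_U}>0$), ending at the $U$-independent unitary $\cKU(\Psi,\sqrt t\,\Omega+\sqrt{1-t}\,\Psi)$.

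The resulting $H$ is continuous with $H(\cdot,0)=\id$ and $H(\cdot,1)$ constant, and has image in $S$ because in each stage the moving target vector has $\pi(P)\cH$-part of norm in $[\sqrt t,\norm{v_U}]$ or equal to $\sqrt t$. The remaining point is that every intermediate unitary lies within $12\delta$ of $\1$: in $H_1$ this distance is $<\delta+\sqrt{2-2\sqrt{1-\delta^2}}$; in $H_2$ it is at most $3$ times that, by Lemma \ref{lem:V_deform_to_U}; and in $H_3,H_4,H_5$ it equals $\norm{\Psi-\eta(s)}$ for the relevant target $\eta(s)$, which is bounded using $\sqrt t<\delta$, $\norm{v_U}<\delta$, $\norm{w_U-\Psi}<\delta$, the bound on $\abs{1-e^{-i\alpha_U}}$, and Lemma \ref{lem:normalization_bound} to control $\norm{\Phi_U-\Psi}$ and $\norm{\chi_U(s)-\Psi}$. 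A short computation shows all of these are $<12\delta$ for $\delta<7/16$, so the inclusion $B_\delta(\1)\cap S\to B_{12\delta}(\1)\cap S$ is nulhomotopic.

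I expect the main obstacle to be exactly this last, unglamorous norm bookkeeping: the geometric content of $H_1,\dots,H_5$ is transparent, but five separate constructions contribute to the distance from $\1$, and one must check stage by stage that the bound $12\delta$ -- together with membership in $S$ -- survives each one. The trickiest stage is the phase normalization $H_1$: it is the one place the hypothesis $\pi(P)\Psi=0$ is used (it keeps $\ev{\Psi,\pi(U)\Psi}$ within $\delta$ of $1$, hence makes the phase correction $O(\delta)$), and its output must still meet the hypotheses of Lemma \ref{lem:V_deform_to_U}, which is what pins down the numerical constraint $\delta<7/16$.
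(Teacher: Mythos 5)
Your proposal is correct, but it takes a genuinely different route from the paper's. The paper fixes $U_0\in B_\delta(\1)\cap S$, lifts a contraction of $\bbS\pi(P)\cH$ through the principal bundle $\Unitary(P\fA P)\to\bbS\pi(P)\cH$ of Corollary \ref{cor:principal_bundle} (a Hurewicz fibration) and conjugates $U$ by the lifted unitaries $E(U,s)+(\1-P)$ — an isometric first stage that aligns $\pi(PU)\Psi$ with a fixed $\Omega_0$ — and then concludes with one application of Lemma \ref{lem:ZPgammadelta} (for $\1-P$, cost factor $4$) followed by factoring through the inclusion $B_{4\delta}(\1)\cap R\to B_{12\delta}(\1)\cap R$ of Lemma \ref{lem:steps_3_4_homotopy}, which is where $12\delta$ comes from. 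You instead phase-normalize, invoke Lemma \ref{lem:V_deform_to_U} to replace $U$ by $\cKU(\Psi,\pi(U)\Psi)$, and from then on control everything by moving the target vector and applying $\cKU(\Psi,\cdot)$: contractibility of $\bbS\pi(P)\cH$ is used directly on the $P$-component of the target, so no bundle lifting or homotopy lifting property is needed, and Lemmas \ref{lem:ZPgammadelta} and \ref{lem:steps_3_4_homotopy} are bypassed, their content being redone by hand in your stages $H_3$--$H_5$ with the help of the bound $t<\delta^2$, which is special to the case $\pi(P)\Psi=0$. I checked the bookkeeping you deferred: the tangent-line argument gives $\abs{1-e^{i\alpha_U}}\le\sqrt{2-2\sqrt{1-\delta^2}}$, so $\norm{\1-e^{-i\alpha_U}U}<1$ for $\delta<7/16$ and Lemma \ref{lem:V_deform_to_U} applies; the worst excursion is stage $H_2$, at most $3\bigl(\delta+\sqrt{2-2\sqrt{1-\delta^2}}\bigr)\le 3(1+\sqrt2)\delta<12\delta$, while in $H_3$--$H_5$ one gets $\norm{\1-\cKU(\Psi,\eta(s))}=\norm{\Psi-\eta(s)}\le\sqrt3\,\delta$ using $\Re\ev{\Psi,\pi(U)\Psi}\ge1-\delta^2/2$, $\sqrt{1-t}\ge\norm{\pi((\1-P)U)\Psi}$ and $t<\delta^2$; and membership in $S$ holds throughout since the $\pi(P)\cH$-component of $\pi(\cdot)\Psi$ never has norm below $\sqrt t$. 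What each route buys: the paper reuses Lemma \ref{lem:steps_3_4_homotopy} uniformly (it also serves Lemma \ref{lem:normsq_pi(P)x>2delta}) and its conjugation stage costs nothing in norm; yours avoids the fibration machinery at the price of more delicate constants, and would not transfer to the ``large $\norm{\pi(P)\Psi}$'' case where $t<\delta^2$ fails. One small slip in your commentary, not affecting correctness: the estimate $\abs{\ev{\Psi,\pi(U)\Psi}-1}<\delta$ holds for every $U\in B_\delta(\1)$ and does not use $\pi(P)\Psi=0$; that hypothesis is really used for $\norm{\pi(PU)\Psi}<\delta$ (hence $t<\delta^2$), for keeping $\ev{\Psi,\cdot}$ real and positive along your target paths (since $\Psi\perp\pi(P)\cH$), and for the final rotation inside $\pi(\1-P)\cH$ (since $\Psi$ lies in that subspace).
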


\begin{proof}
Let $\delta \in (0,7/16)$ and assume $B_\delta(\1) \cap S \neq \varnothing$. We show that the inclusion map $B_\delta(\1) \cap S \rightarrow B_{12\delta}(\1) \cap S$ is nulhomotopic. 

Suppose $t = 0$. Then $S = \Unitary(\fA)$, so $B_\delta(\1) \cap S = B_\delta(\1)$. Thus, the inclusion map $B_\delta(\1) \cap S \rightarrow B_{12\delta}(\1) \cap S$ is nulhomotopic because the domain $B_\delta(\1)$ is contractible.

Consider the case where $t > 0$. Let $p:\hilbH \rightarrow \pi(P)\hilbH$ be the projection onto $\pi(P)\hilbH$ and let $\pi_P:P\fA P \rightarrow \B(\pi(P)\hilbH)$ be defined by $\pi_P(A) = p\pi(A)p^*$. By Lemma \ref{lem:compressed_rep} we know $(\pi(P)\cH,  \pi_P)$ is a nonzero irreducible representation of $P\fA P$.

We have a continuous map
\[
\Omega:B_\delta(\1) \cap S \rightarrow \bbS \pi(P)\hilbH, \quad \Omega_U = \frac{\pi(PU)\Psi}{\norm{\pi(PU)\Psi}}. 
\]
Fix $U_0 \in B_\delta(\1) \cap S$ and set $\Omega_0 = \Omega_{U_0}$.
Since $\pi(P)\hilbH$ is infinite-dimensional, we know $\bbS\pi(P)\hilbH$ is contractible, so there exists a homotopy $\tilde E: (B_\delta(\1) \cap S) \times I \rightarrow \bbS \pi(P)\hilbH$ such that 
\[
\tilde E(U, 0) = \Omega_0 \qqtext{and} \tilde E(U, 1) = \Omega_U
\]
for all $U \in B_\delta(\1) \cap S$. Since $(\pi(P)\cH,  \pi_P)$ is a nonzero irreducible representation of $P\fA P$, the map
\[
\Unitary(P\fA P) \rightarrow \bbS \pi(P)\cH, \quad V \mapsto  \pi_P(V)\Omega_0
\]
is a fiber bundle by Corollary \ref{cor:principal_bundle}. Since the base space  $\bbS \pi(P)\cH$ is metrizable, it is in particular paracompact Hausdorff, so this fiber bundle is a Hurewicz fibration \cite{hurewicz1955HLP}. Therefore we may find a homotopy $E:(B_\delta(\1) \cap S) \times I \rightarrow \Unitary(P\fA P)$ such that the diagram below commutes.
\[
\begin{tikzcd}[column sep = large, row sep = large]
(B_\delta(\1) \cap S) \times \qty{0} \arrow[r, "\quad{(U, 0) \mapsto P}\quad"]\arrow[d, hook] & \Unitary(P\fA P)\arrow[d, "\substack{V\\\downmapsto\\ \pi_P(V)\Omega_0}"]\\
(B_\delta(\1) \cap S) \times I \arrow[r,"\tilde E"]\arrow[ru,dashed,"E"] & \bbS \pi(P)\cH
\end{tikzcd}
\]
In particular, $E(U, 0) = P$ and $\pi_P(E(U, 1))\Omega_0 = \Omega_U$ for all $U \in B_\delta(\1) \cap S$. By definition of $\pi_P$ and the fact that $E(U,1) = PE(U,1)$, we have 
\[
\Omega_U = \pi(P) \pi(E(U,1))\Omega_0 = \pi(E(U,1))\Omega_0 = \pi(E(U,1) + (\1 - P))\Omega_0.
\]
for all $U \in B_\delta(\1) \cap S$. 

We note that $E(U, s) + (\1 - P) \in \Unitary(\fA)$ for all $(U, s) \in (B_\delta(\1) \cap S) \times I$.  Define $H_1:(B_\delta(\1) \cap S) \times I \rightarrow B_{12\delta}(\1) \cap S$ by
\[
H_1(U, s) = \qty[E(U,s) + (\1 - P)]^*U\qty[E(U,s) + (\1 - P)].
\]
Since $E(U,s) + \1 - P$ is unitary, we see that 
\[
\norm{\1 - H_1(U,s)} = \norm{\1 - U} < \delta.
\]
Furthermore, since $\pi(E(U,s) + \1 - P)\Psi = \Psi$, we have
\begin{align*}
\ev{\Psi, \pi(H_1(U,s)^*PH_1(U,s))\Psi} &= \ev{\Psi, \pi(U^*E(U,s)PE(U,s)^* U)\Psi}\\
&= \ev{\Psi, \pi(U^*PU)\Psi} \geq t.
\end{align*}
so $H_1$ in fact has image in $B_\delta(\1) \cap S$. Since $E(U,0) = P$, we see that $H_1(U,0) = U$, so $H_1$ is a homotopy of the inclusion map. Finally, since $[P, E(U,s) + \1 - P] = 0$, we have
\[
\pi(PH_1(U,1))\Psi = \pi(E(U,1) + (\1 - P))^* \pi(PU)\Psi = \norm{\pi(PU)\Psi}\Omega_0
\]
for all $U \in B_\delta(\1) \cap S$.

We now proceed in a similar fashion to Lemma \ref{lem:normsq_pi(P)x>2delta}. By the triangle inequality we have
\begin{align*}
\norm{\pi((\1 - P)U)\Psi} > 1 - \delta > \frac{9}{16}
\end{align*}
for all $U \in B_\delta(\1)$, where we have used $\delta < 7/16$. Setting $\gamma = 9/16$, we see that $B_\delta(\1) \subset Z_{\1 - P, \gamma, \delta}$, where $Z_{\1 - P, \gamma, \delta}$ is as in Lemma \ref{lem:ZPgammadelta}. Using $\norm{\pi(\1 - P)\Psi} = 1$ and $\delta^2 < 81/200$, we see that $\beta < 3$, where $\beta$ is defined by \eqref{eq:beta_def}. Using $\delta^2 < 9/8$ and $\delta < 2/3$, we verify \eqref{eq:beta_gamma_delta_condition}.

Let $\cKW:Z_{\1 - P, \gamma, \delta} \times I \rightarrow \Unitary(\fA)$ be the map from Lemma \ref{lem:ZPgammadelta}. Our second homotopy is the map $H_2:(B_\delta(\1) \cap S) \times I \rightarrow B_{12\delta}(\1) \cap S$ defined by
\[
H_2(U,s) = \cKW(H_1(U,1), s).
\]
By \ref{ite:(1-P)W(U,s)} of Lemma \ref{lem:ZPgammadelta}, we have
\[
PH_2(U,s) = H_1(U,1),
\]
hence 
\[
\norm{\pi(PH_2(U,s))\Psi} = \norm{\pi(PH_1(U,1))\Psi}= \norm{\pi(PU)\Psi},
\]
so indeed $H_2(U,s) \in S$ for all $(U,s) \in (B_\delta(\1) \cap S) \times I$. By \ref{ite:W(u,s)_norm} of Lemma \ref{lem:ZPgammadelta} and the fact that $\beta < 3$, we know
\[
\norm{\1 - H_2(U,s)} \leq 4 \norm{\1 - H_1(U,1)} < 4\delta,
\] 
hence $H_2(U,s) \in B_{12\delta}(\1)$ for all $(U,s) \in (B_\delta(\1) \cap S) \times I$. This proves that $H_2$ is well-defined. At $s = 0$, \ref{ite:W(U,0)} of Lemma \ref{lem:ZPgammadelta} yields $H_2(U,0) = H_1(U,1)$. At $s = 1$, \ref{ite:(1-P)W(U,s)} and \ref{ite:PW(U,1)x} of Lemma \ref{lem:ZPgammadelta} yield
\begin{align*}
\pi(H_2(U,1))\Psi &= \pi(P\cKW(H_1(U,1),1))\Psi + \pi((\1 - P)\cKW(H_1(U,1),1))\Psi\\
&= \pi(PH_1(U,1))\Psi + \norm{\pi((\1 - P)H_1(U,1))\Psi}\Psi\\
&= \norm{\pi(PU)\Psi}\Omega_0 + \norm{\pi((\1 - P)U)\Psi}\Psi
\end{align*}

Finally, we observe that $H_2(\,\_\,,1):B_\delta(\1) \cap S \rightarrow B_{12\delta}(\1) \cap S$ has image in $B_{4\delta}(\1) \cap R$, where $R$ is as defined in Lemma \ref{lem:steps_3_4_homotopy} with $\Omega = \Omega_0$ and $\Phi = \Psi$. Noting that $R \subset S$, we see that $H_2(\,\_\,,1)$ factors as a composition
\[
B_\delta(\1) \cap S \rightarrow B_{4\delta}(\1) \cap R \rightarrow B_{12\delta}(\1) \cap R \rightarrow B_{12\delta}(\1) \cap S,
\]
where the second and third arrow are inclusion maps. By Lemma \ref{lem:steps_3_4_homotopy} we know the middle arrow is nulhomotopic, so $H_2(\,\_\,,1)$ is nulhomotopic, as desired.
\end{proof}

The following lemma deals with the case where $\norm{\pi(P)\Psi}$ is ``small'' by reducing it to the case where $\pi(P)\Psi = 0$.

\begin{lem}\label{lem:normsq_pi(P)x<2delta}
Let $\fA$ be a unital $C^*$-algebra, let $P \in \fA$ be a projection, and let $(\cH, \pi)$ be a nonzero irreducible representation of $\fA$ such that the range of $\pi(P)\cH$ is infinite-dimensional. Furthermore, let $\Psi \in \bbS \cH$, let $t \in [0,1)$, and define
\[
S = \qty{U \in \Unitary(\fA) : \norm{\pi(PU)\Psi}^2 \geq t}.
\]
Suppose $\delta \in (0, 1/36)$ and $\norm{\pi(P)\Psi}^2 < 2\delta$. Either $B_\delta(\1) \cap S = \varnothing$ or the inclusion map $B_\delta(\1) \cap S \rightarrow B_{32\sqrt{\delta}}(\1) \cap S$ is nulhomotopic.
\end{lem}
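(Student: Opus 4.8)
The plan is to reduce this to Lemma~\ref{lem:pi(P)x=0} by rotating $\Psi$, through a unitary that is small precisely because $\norm{\pi(P)\Psi}$ is small, to a vector annihilated by $\pi(P)$. Since $\norm{\pi(P)\Psi}^2 < 2\delta < 1$, we have $\norm{\pi(\1-P)\Psi}^2 = 1 - \norm{\pi(P)\Psi}^2 > 0$, so $\pi(\1-P)\Psi \neq 0$. Applying Corollary~\ref{cor:unitary_onto_projection} with the nonzero projection $\pi(\1-P) \in \B(\cH)$ produces a unitary $V \in \Unitary(\fA)$ with
\[
\pi(V)\Psi = \frac{\pi(\1-P)\Psi}{\norm{\pi(\1-P)\Psi}} \quad\text{and}\quad \norm{\1 - V} = \sqrt{2 - 2\norm{\pi(\1-P)\Psi}}.
\]
Setting $\Psi' = \pi(V)\Psi \in \bbS\cH$, we get $\pi(P)\Psi' = 0$ because $P(\1-P) = 0$. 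Writing $c = \norm{\pi(P)\Psi}^2 \in [0,1)$, the elementary inequality $\sqrt{1-c} \geq 1-c$ gives $\norm{\1-V}^2 = 2 - 2\sqrt{1-c} \leq 2c < 4\delta$, i.e.\ $\norm{\1-V} < 2\sqrt{\delta}$.

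Next I would transport $S$ to the $\pi(P)\Psi = 0$ situation by right multiplication by $V$. Let $S' = \{U \in \Unitary(\fA) : \norm{\pi(PU)\Psi'}^2 \geq t\}$. Since $\pi(PUV)\Psi = \pi(PU)\pi(V)\Psi = \pi(PU)\Psi'$, the homeomorphism $U \mapsto UV$ of $\Unitary(\fA)$ carries $S'$ onto $S$, with inverse $U \mapsto UV^{-1}$ carrying $S$ onto $S'$; moreover $\norm{\1 - UV^{\pm 1}} \leq \norm{\1-V} + \norm{\1-U}$. Hence, with $\delta' := \delta + 2\sqrt{\delta}$, the map $U \mapsto UV^{-1}$ sends $B_\delta(\1) \cap S$ into $B_{\delta'}(\1) \cap S'$, and $U \mapsto UV$ sends $B_r(\1) \cap S'$ into $B_{r+2\sqrt{\delta}}(\1) \cap S$ for every $r > 0$. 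We may assume $B_\delta(\1) \cap S \neq \varnothing$ (otherwise there is nothing to prove), so $B_{\delta'}(\1) \cap S' \neq \varnothing$ as well. The data $(\fA, P, (\cH,\pi), \Psi', t)$ meet the hypotheses of Lemma~\ref{lem:pi(P)x=0}: $\pi(P)\Psi' = 0$, $\pi(P)\cH$ is infinite-dimensional, and $\delta < 1/36$ forces $\delta' < \tfrac{1}{36} + \tfrac{1}{3} = \tfrac{13}{36} < \tfrac{7}{16}$. Therefore the inclusion $B_{\delta'}(\1) \cap S' \hookrightarrow B_{12\delta'}(\1) \cap S'$ is nulhomotopic.

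Finally I would assemble the pieces. Consider the composite
\[
B_\delta(\1) \cap S \;\longrightarrow\; B_{\delta'}(\1) \cap S' \;\hookrightarrow\; B_{12\delta'}(\1) \cap S' \;\longrightarrow\; B_{12\delta' + 2\sqrt{\delta}}(\1) \cap S \;\hookrightarrow\; B_{32\sqrt{\delta}}(\1) \cap S,
\]
where the first unlabeled arrow is $U \mapsto UV^{-1}$, the third is $U \mapsto UV$, and the hooks are inclusions; the last inclusion is legitimate since $12\delta' + 2\sqrt{\delta} = 12\delta + 26\sqrt{\delta} \leq 32\sqrt{\delta}$, which rearranges to $2\sqrt{\delta} \leq 1$ and so holds as $\delta < 1/36$. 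Because $(UV^{-1})V = U$ and the hooks move no points, this composite is exactly the inclusion $B_\delta(\1) \cap S \hookrightarrow B_{32\sqrt{\delta}}(\1) \cap S$. Its middle arrow is nulhomotopic, and pre- and post-composing a nulhomotopic map with continuous maps preserves nulhomotopy, so the inclusion $B_\delta(\1) \cap S \hookrightarrow B_{32\sqrt{\delta}}(\1) \cap S$ is nulhomotopic, as claimed. The only delicate point is the bookkeeping of radii (verifying $\delta' < 7/16$ so that Lemma~\ref{lem:pi(P)x=0} applies, and $12\delta' + 2\sqrt{\delta} \leq 32\sqrt{\delta}$ so that the final inclusion lands where advertised), which is routine given $\delta < 1/36$; conceptually the argument is simply that a small rotation intertwines $S$ with the already-handled case $\pi(P)\Psi = 0$.
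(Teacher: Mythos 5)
Your proposal is correct and follows essentially the same route as the paper: use Corollary \ref{cor:unitary_onto_projection} to produce a small unitary $V$ sending $\Psi$ into $\pi(\1-P)\cH$, transport $S$ to the $\pi(P)\Psi'=0$ situation by right multiplication by $V^{\pm 1}$, apply Lemma \ref{lem:pi(P)x=0}, and track the radii. The only (harmless) difference is the bookkeeping constant for $\norm{\1-V}$ (your $2\sqrt{\delta}$ versus the paper's $\sqrt{2\delta/(1-2\delta)}$ via Lemma \ref{lem:normalization_bound}), and both land within the advertised $32\sqrt{\delta}$.
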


\begin{proof}
We know
\[
\norm{\pi(\1 - P)\Psi}^2 = 1 - \norm{\pi(P)\Psi}^2 > 1 - 2\delta > 0,
\]
so $\pi(\1 - P)\Psi \neq 0$.
By Corollary \ref{cor:unitary_onto_projection}, there exists $V \in \Unitary(\fA)$ such that
\[
\pi(V)\Psi = \frac{\pi(\1 - P)\Psi}{\norm{\pi(\1 - P)\Psi}}
\]
and
\[
\norm{\1 - V} = \norm{\frac{\pi(\1 - P)\Psi}{\norm{\pi(\1 - P)\Psi}} - \Psi} \leq \frac{\norm{\pi(\1 - P)\Psi - \Psi}}{\sqrt{\norm{\pi(\1 - P)\Psi}}} < \sqrt{\frac{2\delta}{1 - 2\delta}},
\]
where we have used Lemma \ref{lem:normalization_bound}. Let $\gamma = \sqrt{2\delta/(1 - 2\delta)}$. Note that $\delta + \gamma < 7/16$.

Assume $B_\delta(\1) \cap S \neq \varnothing$. Define
\[
S' = \qty{U \in \Unitary(\fA): \norm{\pi(PU)\pi(V)\Psi}^2 \geq t}.
\]
Consider the map $f:B_\delta(\1) \cap S \rightarrow B_{\delta + \gamma}(\1) \cap S'$ defined by
\[
f(U) = UV^*,
\]
which is easily seen to be well-defined. Since $\pi(PV)\Psi = 0$ and $\delta + \gamma < 7/16$, Lemma \ref{lem:pi(P)x=0} implies that the inclusion map $B_{\delta + \gamma}(\1) \cap S' \rightarrow B_{12\delta + 12\gamma}(\1) \cap S'$ is nulhomotopic. Composing this inclusion with $f$ therefore yields a nulhomotopic map $g:B_\delta(\1) \cap S \rightarrow B_{12\delta + 12\gamma}(\1) \cap S'$, $g(U) = UV^*$. Finally, we compose $g$ with the map $h:B_{12\delta + 12\gamma}(\1) \cap S' \rightarrow B_{12\delta + 13\gamma}(\1) \cap S$ defined by
\[
h(W) = WV.
\]
Then $h \circ g$ is nulhomotopic and is the inclusion $B_{\delta}(\1) \cap S \rightarrow B_{12\delta + 13\gamma}(\1) \cap S$. Using $\delta < 1/4$ we get $12 \delta + 13\gamma < 32\sqrt{\delta}$, so the inclusion $B_\delta(\1) \cap S \rightarrow B_{32\sqrt{\delta}}(\1) \cap S$ is nulhomotopic.
\end{proof}

Combining the cases considered above yields the following corollary.

\begin{cor}\label{cor:cases_combined}
Let $\fA$ be a unital $C^*$-algebra, let $P \in \fA$ be a projection, and let $(\cH, \pi)$ be a nonzero irreducible representation of $\fA$ such that the range of $\pi(P)\cH$ is infinite-dimensional. Let $\Psi \in \bbS \cH$, let $t \in [0,1/4]$, and define
\[
S = \qty{U \in \Unitary(\fA): \norm{\pi(PU)\Psi}^2 \geq t}.
\]
Let $\delta \in (0, 1/1296)$. Either $B_\delta(\1) \cap S = \varnothing$ or the inclusion $B_\delta(\1) \cap S \rightarrow B_{39\sqrt{\delta}}(\1) \cap S$ is nulhomotopic.
\end{cor}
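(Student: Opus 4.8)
The plan is a straightforward case analysis on the size of $\norm{\pi(P)\Psi}^2$, dispatching each regime to one of the three preceding lemmas. First I would dispose of the trivial alternative: if $B_\delta(\1) \cap S = \varnothing$ there is nothing to prove, so assume henceforth that $B_\delta(\1) \cap S \neq \varnothing$.

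Since $\delta < 1/1296 < 1/36$ and $t \in [0,1/4] \subset [0,1)$, the numerical hypotheses of both Lemma \ref{lem:normsq_pi(P)x>2delta} and Lemma \ref{lem:normsq_pi(P)x<2delta} are satisfied in their respective regimes; moreover the assumption that $\pi(P)\cH$ is infinite-dimensional is precisely what Lemma \ref{lem:normsq_pi(P)x<2delta} requires (it invokes Lemma \ref{lem:pi(P)x=0}). If $\norm{\pi(P)\Psi}^2 \geq 2\delta$, then Lemma \ref{lem:normsq_pi(P)x>2delta} directly gives that the inclusion $B_\delta(\1) \cap S \rightarrow B_{39\sqrt{\delta}}(\1) \cap S$ is nulhomotopic. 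If instead $\norm{\pi(P)\Psi}^2 < 2\delta$, then Lemma \ref{lem:normsq_pi(P)x<2delta} gives that the inclusion $B_\delta(\1) \cap S \rightarrow B_{32\sqrt{\delta}}(\1) \cap S$ is nulhomotopic; composing this with the evident inclusion $B_{32\sqrt{\delta}}(\1) \cap S \hookrightarrow B_{39\sqrt{\delta}}(\1) \cap S$, which is legitimate because $32 \leq 39$, shows that the inclusion $B_\delta(\1) \cap S \rightarrow B_{39\sqrt{\delta}}(\1) \cap S$ is nulhomotopic in this case as well. As the two regimes $\norm{\pi(P)\Psi}^2 \geq 2\delta$ and $\norm{\pi(P)\Psi}^2 < 2\delta$ exhaust all possibilities, the corollary follows.

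There is essentially no genuine obstacle here beyond bookkeeping: one only needs to confirm the chain of numerical constraints ($1/1296 < 1/36$ so that $\delta$ is in range for the small case, and $1/4 < 1$ so that $t$ is in range) and to note that the target radius $39\sqrt{\delta}$ was chosen as the larger of the two output radii $39\sqrt{\delta}$ (large case) and $32\sqrt{\delta}$ (small case), so that both cases can be made to land in the same space $B_{39\sqrt{\delta}}(\1) \cap S$.
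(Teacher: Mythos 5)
Your proposal is correct and is essentially identical to the paper's own proof, which likewise splits on whether $\norm{\pi(P)\Psi}^2 \geq 2\delta$ and applies Lemma \ref{lem:normsq_pi(P)x>2delta} or Lemma \ref{lem:normsq_pi(P)x<2delta} accordingly. Your extra remarks (checking $1/1296 < 1/36$, $t \in [0,1/4] \subset [0,1)$, and enlarging $32\sqrt{\delta}$ to $39\sqrt{\delta}$) are exactly the bookkeeping the paper leaves implicit.
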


\begin{proof}
If $\norm{\pi(P)\Psi}^2 \geq 2\delta$, then we apply Lemma \ref{lem:normsq_pi(P)x>2delta}. Otherwise, we apply Lemma \ref{lem:normsq_pi(P)x<2delta}.
\end{proof}

Finally, we show that the family $\cS$ of sets $S(\psi, t)$ is equi-$LC^n$ for all $n$.

\begin{thm}\label{thm:equiLCn_pure_states}
Let $\fA$ be a unital $C^*$-algebra, let $P \in \fA$ be a projection, and let $\sQ \subset \sP(\fA)$ be a nonempty family of pure states of $\fA$ such that:
\begin{enumerate}
	\item the range $\pi_\psi(P)\cH_\psi$ is infinite-dimensional for all $\psi \in \sQ$, where $(\cH_\psi, \pi_\psi, \Psi_\psi)$ is the GNS representation of $\psi$,
	\item $\psi \in \sQ$ and $U \in \Unitary(\fA)$ implies $\psi \circ \Ad U^* \in \sQ$
\end{enumerate}
For each $\psi \in \sQ$ and $t \in [0,1/4]$, define
\[
S(\psi, t) = \qty{U \in \Unitary(\fA): \psi(U^*PU) \geq t}.
\]
Then each $S(\psi, t)$ is nonempty and closed, and the family 
\[
\cS = \qty{S(\psi, t): \psi \in \sQ, t \in [0,1/4]}
\]
covers $\Unitary(\fA)$ and is equi-$LC^n$ for all $n \in \bbN \cup \qty{0, -1}$.
\end{thm}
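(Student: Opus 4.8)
The plan is to dispose of the three elementary assertions (that each $S(\psi,t)$ is nonempty and closed, and that $\cS$ covers $\Unitary(\fA)$) directly, and then to obtain the equi-$LC^n$ statement by combining Corollary \ref{cor:cases_combined} with Proposition \ref{prop:equiLCn_topological_group}. For closedness I would observe that $U \mapsto \psi(U^*PU)$ is norm-continuous, so $S(\psi,t)$ is the preimage of the closed ray $[t,\infty)$. For nonemptiness I would pass to the GNS representation $(\cH_\psi,\pi_\psi,\Psi_\psi)$, note that hypothesis (a) forces $\pi_\psi(P)\cH_\psi \neq 0$, pick a unit vector $\Omega \in \bbS\,\pi_\psi(P)\cH_\psi$, and use transitivity of the $\Unitary(\fA)$-action on $\bbS\cH_\psi$ (Kadison transitivity, as invoked in the proof of Corollary \ref{cor:principal_bundle}) to find $U$ with $\pi_\psi(U)\Psi_\psi = \Omega$; then $\psi(U^*PU) = \norm{\pi_\psi(P)\Omega}^2 = 1 \ge t$, so $U \in S(\psi,t)$. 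Covering is immediate: $U^*PU \ge 0$ gives $\psi(U^*PU)\ge 0$ always, hence $S(\psi,0) = \Unitary(\fA)$ for any $\psi \in \sQ$, and $\sQ \neq \varnothing$.

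For the equi-$LC^n$ claim I would first check the hypotheses of Proposition \ref{prop:equiLCn_topological_group}: $\Unitary(\fA)$ is a topological group in the norm topology, $\cS$ is a nonempty family of nonempty sets, and $\cS$ is closed under right translation $S \mapsto Sy^{-1}$. For the last point, writing a typical element of $S(\psi,t)y^{-1}$ as $W = Uy^{-1}$ gives $\psi(U^*PU) = \psi(y^*W^*PWy) = (\psi\circ\Ad y^*)(W^*PW)$, so $S(\psi,t)y^{-1} = S(\psi\circ\Ad y^*, t)$, and $\psi\circ\Ad y^* \in \sQ$ by hypothesis (b). Proposition \ref{prop:equiLCn_topological_group} then reduces the whole statement to showing that $\cS$ is equi-$LC^n$ at the single point $\1$.

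To verify equi-$LC^n$ at $\1$ (the case $n = -1$ being automatic), I would take an arbitrary neighborhood $O$ of $\1$, choose $\varepsilon > 0$ with $B_\varepsilon(\1) \subset O$, and then pick $\delta \in (0, 1/1296)$ small enough that $39\sqrt{\delta} \le \varepsilon$, setting $O' = B_\delta(\1)$. For each $S = S(\psi,t) \in \cS$ I would rewrite $S$ in the GNS picture as $\{U : \norm{\pi_\psi(PU)\Psi_\psi}^2 \ge t\}$, whose hypotheses match those of Corollary \ref{cor:cases_combined} ($\pi_\psi$ irreducible, $\pi_\psi(P)\cH_\psi$ infinite-dimensional, $\Psi_\psi \in \bbS\cH_\psi$, $t \in [0,1/4]$). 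That corollary then gives: either $S \cap O' = \varnothing$, in which case the equi-$LC^n$ condition is vacuous for that $S$, or the inclusion $S \cap B_\delta(\1) \hookrightarrow S \cap B_{39\sqrt{\delta}}(\1)$ is nulhomotopic, in which case, for any $m \le n$ and any continuous $f : \bbS^m \to S \cap O'$, composing $f$ with this inclusion and then with $S \cap B_{39\sqrt{\delta}}(\1) \hookrightarrow S \cap O$ exhibits $\iota \circ f$ as nulhomotopic. Since $\delta$ was chosen with no dependence on $\psi$, $t$, $m$, or $n$, this simultaneously establishes equi-$LC^n$ at $\1$ for all $n$.

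I do not expect a genuine obstacle here: the geometric content is already packaged in Corollary \ref{cor:cases_combined} and Proposition \ref{prop:equiLCn_topological_group}. The two points needing care are getting the $\Ad$-conventions right in the translation-invariance identity $S(\psi,t)y^{-1} = S(\psi\circ\Ad y^*, t)$, and observing that the blow-up factor $39\sqrt{\delta}$ tends to $0$ as $\delta \to 0$ uniformly in all parameters — this is exactly what promotes the conclusion from "equi-$LC^n$ for some fixed $n$" to "equi-$LC^n$ for every $n$."
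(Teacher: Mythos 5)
Your proposal is correct and follows essentially the same route as the paper: the same elementary verifications of nonemptiness (Kadison transitivity), closedness, and covering, the same translation identity $S(\psi,t)V^* = S(\psi\circ\Ad V^*,t)$ feeding into Proposition \ref{prop:equiLCn_topological_group}, and the same uniform choice $\delta < \min\qty(1/1296, (\varepsilon/39)^2)$ combined with Corollary \ref{cor:cases_combined} at the identity. No gaps.
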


\begin{proof}
Let $\psi \in \sQ$ and $t \in [0,1/4]$. Since $\pi_\psi(P)\cH_\psi$ is nonzero, there exists a unit vector $\Omega \in \pi_\psi(P)\cH_\psi$. By the Kadison transitivity theorem, there exists $U \in \Unitary(\fA)$ such that $\pi_\psi(U)\Psi_\psi = \Omega$. Then $\psi(U^*PU) = 1$, hence $U \in S(\psi, t)$. This proves that $S(\psi, t)$ is nonempty. Since $S(\psi, 0) = \Unitary(\fA)$ for every $\psi \in \sQ$, it is trivial that $\cS$ covers $\Unitary(\fA)$. Since $U \mapsto \psi(U^*PU)$ is a continuous function for all $\psi \in \sQ$, we see that $S(\psi, t)$ is closed for all $\psi \in \sQ$ and $t \in [0,1/4]$.

It is easy to see that for every $\psi \in \sQ$, $t \in [0,1/4]$, and $V \in \Unitary(\fA)$, we have
\[
S(\psi, t)V^* = S(\psi \circ \Ad(V^*), t),
\]
which is in $\cS$ since $\psi \circ \Ad(V^*) \in \sQ$ by definition of $\sQ$. Thus, by Proposition \ref{prop:equiLCn_topological_group}, it suffices to prove that $\cS$ is equi-$LC^n$ at the identity $\1 \in \Unitary(\fA)$.

Let $\varepsilon > 0$. Choose a positive number $\delta < \min(1/1296, (\varepsilon/39)^2)$. We claim that, for every $\psi \in \sQ$ and $t \in [0,1/4]$, every continuous image of the $k$-sphere $\bbS^k$ in $B_\delta(\1) \cap S(\psi, t)$ is contractible in $B_\varepsilon(\1) \cap S(\psi, t)$. If $B_\delta(\1) \cap S(\psi, t) = \varnothing$, then this is trivially. Otherwise, we note that 
\[
S(\psi, t) = \qty{U\in \Unitary(\fA): \norm{\pi_\psi(PU)\Psi_\psi}^2 \geq t},
\]
therefore Corollary \ref{cor:cases_combined} implies that the inclusion map $B_\delta(\1) \cap S(\psi, t) \rightarrow B_{39\sqrt{\delta}}(\1) \cap S(\psi, t)$ is nulhomotopic. Since $39\sqrt{\delta} < \varepsilon$, the inclusion map $B_\delta(\1) \cap S(\psi, t) \rightarrow B_\varepsilon(\1) \cap S(\psi, t)$ is nulhomotopic. Thus, every continuous image of the $k$-sphere in $B_\delta(\1) \cap S(\psi, t)$ is contractible in $B_\varepsilon(\1) \cap S(\psi, t)$, as desired.
\end{proof}

\section{Applying the Finite-Dimensional Selection Theorem}
\label{sec:michael_application}

We will now use the equi-$LC^n$ property proven in Theorem \ref{thm:equiLCn_pure_states} and Michael's finite-dimensional selection theorem \ref{thm:finite-dim_Michael} to prove our main results. First, given a finite-dimensional compact Hausdorff space $X$, a projection $P \in \fA$, and a weak*-continuous family of pure states $\psi:X \rightarrow \sP(\fA)$, we show how $\psi$ can be deformed to a weak*-continuous family $\omega:X \rightarrow \sP(\fA)$ by acting with a norm-continuous homotopy of unitaries $U:X \times I \rightarrow \Unitary(\fA)$ such that at the end of the deformation we have $\omega_x(P) = 1$ for all $x \in X$. For this problem, the following definition is natural.

\begin{defn}
Let $\fA$ be a unital $C^*$-algebra, let $X$ be a topological space, and let $A \subset X$. A \textit{$\Unitary(\fA)$-homotopy relative to $A$} is a continuous map $U:X\times I \rightarrow \Unitary(\fA)$ such that
\begin{enumerate}
	\item $U(x,0) = \1$ for all $x \in X$,
	\item $U(x,s) = \1$ for all $x \in A$ and $s \in I$.
\end{enumerate}
We say two weak*-continuous maps $\psi:X \rightarrow \sP(\fA)$ and $\omega:X \rightarrow \sP(\fA)$ are \textit{$\Unitary(\fA)$-homotopic relative to $A$} if there exists a $\Unitary(\fA)$-homotopy relative to $A$ such that $\omega_x = U(x,1)\psi_x$ for all $x \in X$, where $U(x,1)\psi_x = \psi_x \circ \Ad(U(x,1)^*)$, as in \eqref{eq:algebra_action_state}. In this case we see that $(x,s) \mapsto U(x,s)\psi_x$ is a weak*-continuous homotopy from $\psi$ to $\omega$ relative to $A$ in the usual sense.
\end{defn}

\begin{thm}\label{thm:Michael_application}
Let $X$ be a compact Hausdorff space with finite Lebesgue covering dimension. Let $\fA$ be a unital $C^*$-algebra, let $P \in \fA$ be a projection, and let $\psi:X \rightarrow \sP(\fA)$ be a weak*-continuous function such that the range $\pi_{x}(P)\cH_{x}$ is infinite-dimensional for all $x \in X$, where $(\cH_x, \pi_x,\Psi_x)$ is the GNS representation of $\psi_x$. There exists a $\Unitary(\fA)$-homotopy $U:X \times I \rightarrow \Unitary(\fA)$ relative to $\qty{x \in X: \psi_x(P) = 1}$ such that 
\begin{equation}\label{eq:end_of_homotopy}
\psi_x(U(x,1)^*PU(x,1)) = 1
\end{equation}
for all $x \in X$.
\end{thm}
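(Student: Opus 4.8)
The plan is to reduce the statement to Theorem~\ref{thm:weak*_selection_theorem}: it suffices to produce a $\Unitary(\fA)$-homotopy relative to $B := \{x \in X : \psi_x(P) = 1\}$ deforming $\psi$ to a weak*-continuous family $\psi'$ with $\psi'_x(P) > 0$ for \emph{all} $x$, because then Theorem~\ref{thm:weak*_selection_theorem} supplies a continuous $W \co X \to \Unitary(\fA)$ with $\psi'_x(W_x^*PW_x) = 1$ and $\norm{\1 - W_x} = \sqrt{2 - 2\sqrt{\psi'_x(P)}} < 2$, so that $s \mapsto e^{is A'_x}$ with $A'_x = -i\Log W_x$ is a $\Unitary(\fA)$-homotopy from $\1$ to $W_x$ that is constant at $\1$ wherever $\psi'_x(P) = 1$, and it can be concatenated with the first deformation. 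Set $m = \dim X < \infty$ and $\sQ = \{\psi_x \circ \Ad(U^*) : x \in X,\ U \in \Unitary(\fA)\}$; conjugation by a unitary changes a GNS triple only by an intertwiner, so the range of $P$ stays infinite-dimensional and $\sQ$ satisfies the hypotheses of Theorem~\ref{thm:equiLCn_pure_states}. Hence $\cS = \{S(\chi, t) : \chi \in \sQ,\ t \in [0,1/4]\}$ is equi-$LC^m$.

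For the first deformation I would apply Theorem~\ref{thm:finite-dim_Michael} on $X \times I$ --- compact, hence paracompact, Hausdorff, of covering dimension at most $m+1$ --- with closed subset $A = (X \times \{0\}) \cup (B \times I)$, target $Y = \Unitary(\fA)$ (a complete metric space), Michael parameter $n = m$, and carrier
\[
\phi(x,s) = S(\psi_x, s/4) = \{U \in \Unitary(\fA) : \psi_x(U^*PU) \geq s/4\} \in \cS.
\]
The constant map $\1$ is a selection for $\phi|_A$, since $\1 \in S(\psi_x,0) = \Unitary(\fA)$ and $\psi_x(P) = 1 \geq s/4$ for $x \in B$. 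Lower semicontinuity of $\phi$ is checked pointwise: the only nontrivial case is $U_0 \in \phi(x_0,s_0)$ with $\psi_{x_0}(U_0^*PU_0) = s_0/4 \leq 1/4 < 1$, so in the GNS representation of $\psi_{x_0}$ the vector $\pi_{x_0}(U_0)\Psi_{x_0}$ has nonzero component in $\pi_{x_0}(\1-P)\cH_{x_0}$ while $\pi_{x_0}(P)\cH_{x_0} \neq 0$; a small rotation of $\pi_{x_0}(U_0)\Psi_{x_0}$ towards $\pi_{x_0}(P)\cH_{x_0}$, realized in $\Unitary(\fA)$ via Kadison transitivity with norm bounds as in Theorem~\ref{thm:unitary_Kadison_lemma}, produces $U$ arbitrarily close to $U_0$ with $\psi_{x_0}(U^*PU) > s_0/4$, whence $U \in \phi(x,s)$ for $(x,s)$ near $(x_0,s_0)$ by joint continuity of $(x,U) \mapsto \psi_x(U^*PU)$. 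Theorem~\ref{thm:finite-dim_Michael} then yields a selection $\sigma$ for $\phi$ on some open $O \supseteq A$ restricting to $\1$ on $A$.

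Since the conditions in \cite[Thm.~1.2]{MichaelSelectionII} permitting $O = X \times I$ fail here, I would instead use compactness of $X$: by the tube lemma there is $\varepsilon > 0$ with $X \times [0,\varepsilon] \subseteq O$, so $V_x := \sigma(x,\varepsilon)$ defines a continuous $V \co X \to \Unitary(\fA)$ with $\psi_x(V_x^*PV_x) \geq \varepsilon/4 > 0$ and $V_x = \1$ for $x \in B$, and $U^{(0)}(x,s) := \sigma(x, \varepsilon s)$ is a $\Unitary(\fA)$-homotopy relative to $B$ from $\1$ to $V$. The family $\psi'_x := \psi_x \circ \Ad(V_x^*)$ is weak*-continuous (each $x \mapsto \psi_x(V_x^*CV_x)$ is continuous) and has $\psi'_x(P) = \psi_x(V_x^*PV_x) > 0$ for all $x$, which is exactly the input required for Theorem~\ref{thm:weak*_selection_theorem} in the first paragraph; write $U^{(1)}(x,s) = e^{isA'_x}$ for the resulting homotopy from $\1$ to $W_x$, which is constant at $\1$ on $B$ because there $V_x = \1$, $\psi'_x = \psi_x$, and $\psi_x(P) = 1$ forces $W_x = \1$.

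Finally I would set $U(x,s) = U^{(0)}(x, 2s)$ for $s \in [0,1/2]$ and $U(x,s) = U^{(1)}(x, 2s-1)\,V_x$ for $s \in [1/2,1]$; the two pieces agree at $s = 1/2$ (both equal $V_x$), so $U$ is continuous, $U(x,0) = \1$, and $U(x,s) = \1$ for $x \in B$. Moreover $U(x,1) = W_x V_x$, so $\psi_x(U(x,1)^*PU(x,1)) = \psi_x(V_x^*W_x^*PW_xV_x) = \psi'_x(W_x^*PW_x) = 1$, giving \eqref{eq:end_of_homotopy}. I expect the main obstacle to lie at the interface with Michael's theorem: verifying lower semicontinuity of $\phi$, and above all converting the selection defined only on an open $O$ into data over all of $X$ --- which is precisely why compactness of $X$ (the tube lemma) and the switch to Theorem~\ref{thm:weak*_selection_theorem} are needed, the heavy lifting having already been done in the equi-$LC^n$ result of Theorem~\ref{thm:equiLCn_pure_states}.
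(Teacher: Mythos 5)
Your proposal is correct and follows the same overall route as the paper: the carrier $(x,t)\mapsto S(\psi_x,t)$ over the finite-dimensional compact space $X\times I$, the closed set $A=(X\times\{0\})\cup(\{x:\psi_x(P)=1\}\times I)$ with the constant selection $\1$, the equi-$LC^n$ input from Theorem \ref{thm:equiLCn_pure_states}, Michael's finite-dimensional selection theorem to get a selection on an open neighborhood of $A$, the tube lemma to extract a level $X\times\{\varepsilon\}$, and then Theorem \ref{thm:weak*_selection_theorem} plus a logarithm homotopy to finish and to preserve the relative condition; your reparametrization ($s/4$ on $X\times I$ versus $t$ on $X\times[0,1/4]$) and your concatenation of the two homotopies are only cosmetic differences. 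The one genuine divergence is the verification of lower semicontinuity: the paper fixes $(x_0,t_0)$, works in the GNS representation of the \emph{nearby} state $\psi_x$, and uses an explicit interpolation $\Omega_\theta$ with the intermediate value theorem and quantitative estimates to land exactly on the threshold $t$; you instead perturb $U_0$ once, at the base point $x_0$, to achieve the \emph{strict} inequality $\psi_{x_0}(U^*PU)>s_0/4$ (possible since $s_0/4\leq 1/4<1$ and $\pi_{x_0}(P)\cH_{x_0}\neq 0$, realized via Theorem \ref{thm:unitary_Kadison_lemma}), and then invoke openness of $\{(x,s):\psi_x(U^*PU)>s/4\}$ for that fixed $U$. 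This softer argument is valid and somewhat leaner than the paper's, since lower semicontinuity only requires nonempty intersection with a given neighborhood of $U_0$, not hitting the threshold exactly; the paper's version buys explicit uniform norm control that is not actually needed at this step.
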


\begin{proof}
Let $\sQ = \qty{\psi_x \circ \Ad U^*: x \in X \tn{ and } U \in \Unitary(\fA)}$. Let $x \in X$, $U \in \Unitary(\fA)$, and let $(\cH, \pi, \Psi)$ be the GNS representation of $\psi_x \circ \Ad U^*$. By uniqueness of the GNS construction up to unitary equivalence there exists a unitary $V:\cH_x \rightarrow \cH$ such that $V\pi_x(U)\Psi_x = \Psi$ and $\Ad V \circ \pi_x = \pi$. In particular $\pi(P)V = V \pi_x(P)$, which implies that $\pi(P)\cH = V\pi_x(P)\cH_x$ is infinite-dimensional. If $W \in \Unitary(\fA)$, then $\psi_x \circ \Ad U^* \circ \Ad W^* = \psi_x \circ \Ad (WU)^* \in \sQ$ by definition of $\sQ$. We see that $\sQ$ satisfies the hypotheses of Theorem \ref{thm:equiLCn_pure_states}.

Given $\psi \in \sQ$ and $t \in [0,1/4]$, let $S(\psi, t)$ be as in Theorem \ref{thm:equiLCn_pure_states}. Similarly, let $\cS$ be as in Theorem \ref{thm:equiLCn_pure_states}. Consider the carrier $\phi:X \times [0,1/4] \rightarrow \cP(\Unitary(\fA)) \setminus \qty{\varnothing}$ defined by
\[
\phi(x, t) = S(\psi_x, t).
\]
Theorem \ref{thm:equiLCn_pure_states} tells us that $\cS$ is a family of nonempty closed subsets of $\Unitary(\fA)$ and $\cS$ is equi-$LC^n$ for all $n \in \bbN \cup \qty{0, -1}$. To apply Theorem \ref{thm:finite-dim_Michael} we must show that this carrier is lower semicontinuous.

Let $(x_0, t_0) \in X \times [0,1/4]$, let $U_0 \in \phi(x_0, t_0)$, and let $\varepsilon > 0$. Choose $\delta > 0$ small enough such that $\delta < 1/2$ and 
\[
\sqrt{2 - 2\sqrt{1 - 2\delta}} < \varepsilon.
\]
There exists a neighborhood $O$ of $(x_0, t_0) \in X \times [0,1/4]$ such that $(x, t) \in O$ implies
\[
\abs{t - t_0} < \delta \qqtext{and} \abs{\psi_{x}(U_0^*PU_0) - \psi_{x_0}(U_0^*PU_0)} < \delta.
\]

Let $(x,t) \in O$. We need to show that $B_\varepsilon(U_0) \cap \phi(x,t) \neq \varnothing$. If $\psi_x(U_0^*PU_0) \geq t$, then $U_0 \in B_\varepsilon(U_0) \cap \phi(x,t)$ and we're done, so suppose $\psi_x(U_0^*PU_0) < t$. 

If $\psi_x(U_0^*PU_0) > 0$, then set
\[
\Omega = \frac{\pi_x(PU_0)\Psi_x}{\norm{\pi_x(PU_0)\Psi_x}} = \frac{\pi_x(PU_0)\Psi_x}{\sqrt{\psi_x(U_0^*PU_0)}}.
\] 
If $\psi_x(U_0^*PU_0) = 0$, then choose any unit vector $\Omega \in \pi_x(P)\hilbH_x$. In both cases we have
\[
\ev{\Omega, \pi_x(U_0)\Psi_x} = \sqrt{\psi_x(U_0^*PU_0)}.
\]

For $\theta \in [0,\pi/2]$, define
\[
\Omega_\theta = \frac{\sin \theta \,\Omega + \cos \theta\,\pi_x(U_0)\Psi_x}{\norm{ \sin \theta \,\Omega + \cos \theta \,\pi_x(U_0)\Psi_x}}.
\]
Note that
\[
\norm{\sin \theta \,\Omega + \cos \theta\, \pi_x(U_0)\Psi_x} = \sqrt{1 + \ev{\Omega, \pi_x(U_0)\Psi_x}\sin 2 \theta} \geq 1
\]
so $\Omega_\theta$ is always a well-defined unit vector. 

Let's compute a few helpful quantities. First,
\[
\ev{\Omega_\theta, \pi_x(U_0)\Psi_x} = \frac{\sin \theta \ev{\Omega, \pi_x(U_0)\Psi_x} + \cos \theta}{\sqrt{1 + \ev{\Omega, \pi_x(U_0)\Psi_x} \sin 2\theta}}.
\]
It will be useful to have the square of this:
\begin{align*}
\ev{\Omega_\theta, \pi(U_0)\Psi_x}^2 &= \frac{\sin^2 \theta \ev{\Omega, \pi_x(U_0)\Psi_x}^2 + \sin 2 \theta \ev{\Omega, \pi_x(U_0)\Psi_x} + \cos^2 \theta}{1 + \ev{\Omega, \pi_x(U_0)\Psi_x}\sin 2 \theta}\\
&= 1 - \qty(\frac{\sin^2 \theta}{1 + \ev{\Omega, \pi_x(U_0)\Psi_x}\sin 2 \theta})\qty(1 - \psi_x(U_0^*PU_0))
\end{align*}
Next,
\begin{align*}
\ev{\Omega_\theta, \pi_x(P)\Omega_\theta} &= \frac{\sin^2 \theta +  \sin 2\theta \ev{\Omega, \pi_x(U_0)\Psi_x} + \cos^2 \theta \, \psi_x(U_0^*PU_0)}{1 + \ev{\Omega, \pi_x(U_0)\Psi_x}\sin 2\theta} \\
&= 1 - \qty(\frac{\cos^2 \theta}{1 + \ev{\Omega, \pi_x(U_0)\Psi_x} \sin 2 \theta})\qty(1 - \psi_x(U_0^*PU_0)).
\end{align*}

We see that at $\theta = 0$ we have $\ev{\Omega_\theta, \pi_x(P)\Omega_\theta} = \psi_x(U_0^*PU_0) < t$ and at $\theta = \pi/2$ we have $\ev{\Omega_\theta, \pi_x(P)\Omega_\theta} = 1$. By the intermediate value theorem there exists $\theta \in [0,\pi/2]$ such that 
\[
\ev{\Omega_\theta, \pi_x(P)\Omega_\theta} = t.
\] 
With a little rearranging, we get
\begin{align*}
1 - t &= \frac{1 - \sin^2 \theta}{1 + \ev{\Omega, \pi_x(U_0)\Omega}\sin 2\theta} (1 - \psi_x(U_0^*PU_0)) \\
&= \frac{1 - \psi_x(U_0^*PU_0)}{1 + \ev{\Omega, \pi_x(U_0)\Omega}\sin 2 \theta} - 1 + \ev{\Omega_\theta, \pi_x(U_0)\Psi_x}^2\\
&\leq -\psi_x(U_0^*PU_0) + \ev{\Omega_\theta, \pi_x(U_0)\Psi_x}^2.
\end{align*}
Observe that
\[
\psi_x(U_0^*PU_0) > \psi_{x_0}(U_0^*PU_0) - \delta \geq t_0 - \delta > t - 2\delta.
\]
Thus,
\[
1 - t < 2\delta - t + \ev{\Omega_\theta, \pi_x(U_0)\Psi_x}^2,
\]
so $0 < \sqrt{1 - 2\delta} < \ev{\Omega_\theta, \pi_x(U_0)\Psi_x}$. Finally,
\[
\norm{\Omega_\theta - \pi_x(U_0)\Psi_x} = \sqrt{2 - 2\ev{\Omega_\theta, \pi_x(U_0)\Psi_x}} < \sqrt{2 - 2\sqrt{1 - 2\delta}} < \varepsilon.
\]

By Theorem \ref{thm:unitary_Kadison_lemma}, there exists $V \in \Unitary(\fA)$ such that $\pi_x(V)\pi_x(U_0)\Psi_x = \Omega_\theta$ and $\norm{\1 - V} < \varepsilon$. Thus, $VU_0 \in B_\varepsilon(U_0)$ and
\[
\psi_x(U_0^*V^*PVU_0) = \ev{\Omega_\theta, \pi_x(P)\Omega_\theta} = t,
\]
so $VU_0 \in \phi(x,t)$. This proves lower semicontinuity of $\phi$.

We are nearly ready to apply the finite-dimensional selection theorem. Define 
\[
A = (\qty{x \in X: \psi_x(P) = 1} \times [0,1/4]) \cup (X \times \qty{0})
\]
and note that $A$ is closed in $X \times [0,1/4]$. Since $X$ is compact and finite-dimensional, we know $d \defeq \dim(X \times [0,1/4]) < \infty$ \cite[Thm.~3.2.13]{EngelkingDimensionTheory}, hence any subset $B \subset (X \times [0,1/4]) \setminus A$ that is closed in $X \times [0,1/4]$ has $\dim B \leq d$ \cite[Thm.~3.1.23]{EngelkingDimensionTheory}. We observe that the constant map $U:A \rightarrow \Unitary(\fA)$, $U(x, t) = \1$ is a selection for $\phi|_A$. By Theorem \ref{thm:finite-dim_Michael} there exists a neighborhood $O$ of $A$ and an extension of $U$ to a selection for $\phi|_{O}$. Let us call this extension $U:O \rightarrow \Unitary(\fA)$ as well.

Since $X$ is compact and $O$ contains $X \times \qty{0}$, the tube lemma states that there exists $\varepsilon \in (0,1/4]$ such that $X \times [0, \varepsilon] \subset O$. Let us now restrict $U$ to $X \times [0,\varepsilon]$ and disregard how it may have been defined outside this set. We will extend $U$ to $X \times I$ using Theorem \ref{thm:weak*_selection_theorem}.

Define $\omega:X \rightarrow \sP(\fA)$ by $\omega_x = \psi_x \circ \Ad U(x, \varepsilon)^*$. Then $\omega$ is weak*-continuous (see for example \cite[Prop.~4.5]{beaudry2023homotopical}) and $\omega_x(P) \geq \varepsilon$ since $U(x,\varepsilon) \in \phi(x, \varepsilon)$ for all $x \in X$. By Theorem \ref{thm:weak*_selection_theorem} there exists $V:X \rightarrow \Unitary(\fA)$ such that $\omega_x(V_x^*PV_x) = 1$ for all $x \in X$ and 
\[
\norm{\1 - V_x} = \sqrt{2 - 2\sqrt{\omega_x(P)}} < 2.
\]
Let $T_x = -i\Log V_x$ for all $x \in X$, where the principal branch of the logarithm is used. Define $U:X \times [\varepsilon, 1] \rightarrow \Unitary(\fA)$ by
\[
U(x, t) = e^{iT_x(t - \varepsilon)/(1 - \varepsilon)}U(x, \varepsilon)
\]
It is clear that our previous function $U$ on $X \times [0,\varepsilon]$ can be glued to the function $U$ above to give a continuous function $U:X \times [0,1] \rightarrow \Unitary(\fA)$.

By definition of $U$ on $A$, we see that $U(x, 0) = \1$ for all $x \in X$. Furthermore,
\[
\psi_x(U(x,1)^*PU(x,1)) = \psi_x(U(x,\varepsilon)^*V_x^*PV_xU(x,\varepsilon)) = \omega_x(V_x^*PV_x) = 1
\]
for all $x \in X$. Finally, suppose $x \in X$ and $\psi_x(P) = 1$. Then for any $t \in [0,\varepsilon]$ we have $(x,t) \in A$, hence $U(x,t) = \1$. In particular, $U(x, \varepsilon) = \1$, hence $\omega_x(P) = \psi_x(P) = 1$, $V_x = \1$, and $T_x = 0$. Then for any $t \in [\varepsilon, 1]$, we see that $U(x,t) = \1$ by definition of $U(x,t)$. This proves that $U$ has all the desired properties.
\end{proof}

The requirement that $\pi_x(P)\cH_x$ is infinite-dimensional is automatically satisfied if $\fA$ is infinite-dimensional, unital, and simple, as follows from \cite[Thm.~2.4.9]{MurphyCAOT}. We record this observation as a corollary.

\begin{cor}\label{cor:infinite_simple}
Let $X$ be a compact Hausdorff space with finite Lebesgue covering dimension. Let $\fA$ be an infinite-dimensional, unital, simple $C^*$-algebra, let $P \in \fA$ be a projection, and let $\psi:X \rightarrow \sP(\fA)$ be weak*-continuous. There exists a $\Unitary(\fA)$-homotopy $U:X \times I \rightarrow \Unitary(\fA)$ relative to $\qty{x \in X: \psi_x(P) = 1}$ such that 
\begin{equation}
\psi_x(U(x,1)^*PU(x,1)) = 1
\end{equation}
for all $x \in X$.
\end{cor}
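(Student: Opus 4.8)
The plan is to obtain the corollary directly from Theorem \ref{thm:Michael_application}: its statement is exactly that of Theorem \ref{thm:Michael_application} with the standing assumption ``$\pi_x(P)\cH_x$ is infinite-dimensional for all $x$'' removed, so all that is needed is to check that this assumption is automatic once $\fA$ is infinite-dimensional, unital and simple. Write $(\cH_x,\pi_x,\Psi_x)$ for the GNS representation of $\psi_x$, and note that we may assume $P\neq 0$ (the asserted conclusion $\psi_x(U(x,1)^*PU(x,1))=1$ being unattainable otherwise). Fix $x\in X$. Since $\fA$ is simple and $\pi_x\neq 0$, the ideal $\ker\pi_x$ is proper, hence $\ker\pi_x=0$, so $\pi_x$ is faithful; in particular $\pi_x(P)\neq 0$ and $\pi_x$ restricts to a $^*$-isomorphism
\[
P\fA P \;\xrightarrow{\ \cong\ }\; \pi_x(P\fA P)=\pi_x(P)\pi_x(\fA)\pi_x(P)\subseteq\B\bigl(\pi_x(P)\cH_x\bigr).
\]

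The one step requiring argument is to rule out $\dim\pi_x(P)\cH_x<\infty$. If $\pi_x(P)\cH_x$ had finite dimension $k$, then the display above would exhibit $P\fA P$ as a $C^*$-subalgebra of $\B(\pi_x(P)\cH_x)\cong M_k(\bbC)$, hence as a nonzero finite-dimensional $C^*$-algebra. Choose a minimal projection $e$ of $P\fA P$ (one exists, $P\fA P$ being nonzero and finite-dimensional). Then $e\leq P$, so $Pe=eP=e$ and therefore $e\fA e=e(P\fA P)e=\bbC e$, i.e.\ $e$ is a minimal projection of $\fA$. By \cite[Thm.~2.4.9]{MurphyCAOT}, a simple $C^*$-algebra possessing a nonzero minimal projection is elementary, i.e.\ $^*$-isomorphic to $\bbK(\cH)$ for some Hilbert space $\cH$; since $\fA$ is unital this forces $\cH$ to be finite-dimensional, so $\fA\cong M_n(\bbC)$, contradicting that $\fA$ is infinite-dimensional. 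Hence $\pi_x(P)\cH_x$ is infinite-dimensional for every $x\in X$, and Theorem \ref{thm:Michael_application} applies verbatim and yields the desired $\Unitary(\fA)$-homotopy relative to $\{x\in X:\psi_x(P)=1\}$.

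I do not anticipate any serious obstacle beyond this dimension count; the only point to keep in mind is that $\fA$ is \emph{not} assumed separable, so the reduction should go through the minimal-projection characterization of elementary $C^*$-algebras (as above) rather than through, say, Brown's stable-isomorphism theorem for full hereditary subalgebras.
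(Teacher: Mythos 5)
Your reduction is correct and is in substance the same as the paper's: the corollary is obtained by checking that the hypothesis of Theorem \ref{thm:Michael_application} that $\pi_x(P)\cH_x$ be infinite-dimensional is automatic when $\fA$ is infinite-dimensional, unital and simple, with \cite[Thm.~2.4.9]{MurphyCAOT} as the key input. The only differences are in how the dimension count is run, and one point of citation hygiene. The paper's intended argument is the direct one: since $\fA$ is simple, $\pi_x$ is faithful, so if $\dim \pi_x(P)\cH_x<\infty$ then $\pi_x(P)$ is a nonzero finite-rank (hence compact) operator in the irreducibly acting algebra $\pi_x(\fA)$; the literal statement of \cite[Thm.~2.4.9]{MurphyCAOT} then gives $\cK(\cH_x)\subseteq\pi_x(\fA)$, simplicity forces $\pi_x(\fA)=\cK(\cH_x)$, and unitality plus infinite-dimensionality of $\fA$ yields the contradiction. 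Your detour through $P\fA P$ being finite-dimensional and possessing a minimal projection is also valid, but the fact you attribute to Murphy, namely that a simple $C^*$-algebra with a nonzero minimal projection is elementary, is not the literal content of Theorem 2.4.9; it is a standard consequence that still requires the extra observation that a minimal projection is sent to a rank-one (hence compact) projection under an irreducible representation (via $\pi(\fA)''=\B(\cH)$ or Kadison transitivity) before that theorem can be invoked. In your situation this extra step is unnecessary anyway, since $\pi_x(P)$ itself is already the required nonzero compact element, so the direct route is both shorter and matches the cited theorem exactly. Your remark that $P\neq 0$ must be assumed (the conclusion being unattainable otherwise) is consistent with the paper's implicit reading, and your caution about non-separability is apt, though moot once the direct argument is used.
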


We now show how this result can be iterated over to obtain a nulhomotopy of $\psi$. The iteration will be accomplished by using a sequence of projections that excise the state $\psi$.  Recall from \cite{AkemannAndersonPedersenExcision} that if $\fA$ is a $C^*$-algebra, then a net $(E_\lambda)_{\lambda \in \Lambda}$ of positive, norm one elements in $\fA$ is said to \textit{excise} a state $\omega :\fA \rightarrow \bbC$ if for every $A \in \fA$, one has
\[
\lim_{\lambda} \norm{E_\lambda A E_\lambda - \omega(A)E_\lambda^2} = 0. 
\]

Also recall that a unital $C^*$-algebra has \textit{real rank zero} if the set of invertible self-adjoint elements is dense in the set of all self-adjoint elements. A non-unital $C^*$-algebra is said to have real rank zero if its unitization has real rank zero. Finally, recall that a $C^*$-algebra is \textit{$\sigma$-unital} if it admits a countable approximate identity.

\begin{prop}\label{prop:excising_sequence}
Let $\fA$ be a unital $C^*$-algebra with real rank zero. If $\omega \in \sP(\fA)$, $\fN = \qty{A \in \fA: \omega(A^*A) = 0}$ is its associated left ideal, and the associated hereditary $C^*$-subalgebra $\fN \cap \fN^*$ is  $\sigma$-unital, then $\omega$ can be excised by a decreasing sequence of projections $(P_n)_{n \in \bbN}$ such that $\omega(P_n) = 1$ for all $n \in \bbN$.
\end{prop}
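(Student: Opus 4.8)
The plan is to combine the excision theorem of Akemann--Anderson--Pedersen with the real rank zero machinery of Brown--Pedersen. First I would recall that for a pure state $\omega$ on $\fA$, the left kernel $\fN = \{A \in \fA : \omega(A^*A) = 0\}$ is a closed left ideal, and the hereditary $C^*$-subalgebra $\fB := \fN \cap \fN^*$ is the algebra associated to $\omega$ in the sense of Akemann--Anderson--Pedersen; the state $\omega$ vanishes identically on $\fB$. Their excision theorem says that any state with associated hereditary subalgebra $\fB$ is excised by any approximate identity of $\fB$: if $(E_\lambda)$ is an approximate unit for $\fB$, then $\lim_\lambda \|E_\lambda A E_\lambda - \omega(A) E_\lambda^2\| = 0$ for all $A \in \fA$. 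Since $\fB$ is assumed $\sigma$-unital, it has a countable approximate identity, so $\omega$ is excised by a \emph{sequence} $(E_n)_{n \in \bbN}$ of positive norm-one elements of $\fB$.

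The key point is then to replace this sequence of positive elements by a decreasing sequence of \emph{projections} in $\fB$ that still forms an approximate identity for $\fB$ — this is where real rank zero enters. By Brown--Pedersen \cite{BrownPedersenRealRankZero}, a $C^*$-algebra $\fA$ has real rank zero if and only if every hereditary $C^*$-subalgebra has an approximate identity consisting of projections; moreover real rank zero passes to hereditary subalgebras. Hence $\fB$, being hereditary in the real rank zero algebra $\fA$ and $\sigma$-unital, admits a \emph{countable} approximate identity consisting of projections, and by a standard argument (choosing each new projection above the previous one and above the next term of a fixed countable approximate identity, using that in a real rank zero algebra one can find a projection $Q$ with $P \leq Q$ and $Q$ close to $1$ on any given finite set — i.e.\ the projections are upward directed and cofinal) one can arrange this approximate identity to be an \emph{increasing} sequence of projections $(Q_n)$. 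Applying excision to this particular approximate identity gives $\lim_n \|Q_n A Q_n - \omega(A)Q_n\| = 0$ for all $A \in \fA$ (note $Q_n^2 = Q_n$).

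Finally I would set $P_n = \1 - Q_n$, so that $(P_n)$ is a \emph{decreasing} sequence of projections in $\fA$. To see that $(P_n)$ also excises $\omega$, note that for $A \in \fA$,
\[
P_n A P_n - \omega(A) P_n = (\1 - Q_n)A(\1 - Q_n) - \omega(A)(\1 - Q_n),
\]
and expanding and using $\|Q_n A - \omega(A) Q_n\| \to 0$, $\|A Q_n - \omega(A)Q_n\| \to 0$, and $\|Q_n A Q_n - \omega(A)Q_n\| \to 0$ (all consequences of the excision property, since these are the norms of $Q_n(\cdot)$, $(\cdot)Q_n$ applied to $A - \omega(A)\1$ composed appropriately, or follow directly from the Cauchy--Schwarz / GNS estimate $\|Q_n B - \omega(B)Q_n\|^2 \le \|Q_n B^*B Q_n - \omega(B^*B)Q_n\| + \text{small}$), one finds $\|P_n A P_n - \omega(A)P_n\| \to 0$. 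It remains to check $\omega(P_n) = 1$: since $Q_n \in \fB \subset \fN$, we have $\omega(Q_n^* Q_n) = \omega(Q_n) = 0$, hence $\omega(P_n) = \omega(\1) - \omega(Q_n) = 1$, as required.

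The main obstacle I anticipate is the middle step: upgrading the countable approximate identity of $\fB$ to an \emph{increasing sequence of projections}. One must be careful that real rank zero is genuinely needed here (a general $\sigma$-unital hereditary subalgebra need not have projections at all), and that the cofinality/upward-directedness argument is carried out correctly so that the resulting projection sequence is still an approximate unit for $\fB$ — this requires interleaving a chosen approximate identity with a sequence of projections dominating its terms, which is exactly the content available from \cite{BrownPedersenRealRankZero}. The excision bookkeeping for $P_n = \1 - Q_n$ is routine once the estimates for $Q_n$ are in hand.
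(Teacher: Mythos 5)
Your overall architecture coincides with the paper's: pass real rank zero to the hereditary subalgebra $\fB = \fN \cap \fN^*$ via Brown--Pedersen, use $\sigma$-unitality to obtain an increasing sequential approximate unit of projections $(Q_n)$ for $\fB$, and take the decreasing projections $P_n = \1 - Q_n$, which satisfy $\omega(P_n) = 1 - \omega(Q_n) = 1$ since $Q_n \in \fB \subset \ker\omega$. However, the step where you invoke excision contains a genuine error. The Akemann--Anderson--Pedersen theorem does \emph{not} say that an approximate unit of $\fB$ excises $\omega$; \cite[Prop.~2.2]{AkemannAndersonPedersenExcision} says that the \emph{complements} $\1 - u_\lambda$ of an approximate unit $(u_\lambda)$ of $\fB$ excise the pure state $\omega$. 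Your claimed limits $\norm{Q_n A Q_n - \omega(A) Q_n} \to 0$ and $\norm{Q_n A - \omega(A) Q_n} \to 0$ are false: already for $\fA = M_2(\bbC)$ with $\omega$ the vector state of $e_1$, one has $\fB = \bbC E_{22}$ with (constant) approximate unit $Q = E_{22}$, and then $QAQ - \omega(A)Q = (A_{22} - A_{11})E_{22}$, while $QA - \omega(A)Q$ has norm $1$ for $A = E_{21}$. Since $\omega$ vanishes identically on $\fB$, its approximate unit concentrates on exactly the wrong corner; it is $P = \1 - Q = E_{11}$ that satisfies $PAP = \omega(A)P$. Consequently the final bookkeeping deriving excision of $(P_n)$ from those estimates is unsound, and the suggested Cauchy--Schwarz/GNS inequality does not repair it.

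The fix is simply to quote the excision theorem as actually stated, which makes your transfer argument unnecessary. The mechanism (and the place where purity enters) is that every $A$ with $\omega(A) = 0$ lies in $\overline{\fN + \fN^*}$, and for $n \in \fN$ one has $n^*n \in \fB$, hence $\norm{n(\1 - Q_k)}^2 = \norm{(\1 - Q_k)n^*n(\1 - Q_k)} \to 0$; this yields $(\1 - Q_k)(A - \omega(A)\1)(\1 - Q_k) \to 0$ for all $A \in \fA$, i.e.\ precisely excision by $P_k = \1 - Q_k$. With that correction your proof reduces to the paper's: real rank zero passes to hereditary subalgebras \cite[Cor.~2.8]{BrownPedersenRealRankZero}, a $\sigma$-unital real rank zero algebra has an increasing approximate unit of projections \cite[Prop.~2.9]{BrownPedersenRealRankZero}, and \cite[Prop.~2.2]{AkemannAndersonPedersenExcision} then gives the excising decreasing sequence $(P_n)$ with $\omega(P_n) = 1$.
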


\begin{proof}
Since $\fN \cap \fN^*$ is a hereditary $C^*$-subalgebra of $\fA$, we know $\fN \cap \fN^*$ has real rank zero by \cite[Cor.~2.8]{BrownPedersenRealRankZero}. The result then follows by synthesis of \cite[Prop.~2.9]{BrownPedersenRealRankZero} and \cite[Prop.~2.2]{AkemannAndersonPedersenExcision}.
\end{proof}

\begin{thm}\label{thm:iteration_nulhomotopy}
Let $X$ be a compact Hausdorff space with finite Lebesgue covering dimension. Let $\fA$ be a unital $C^*$-algebra, and let $\psi:X \rightarrow \sP(\fA)$ be a weak*-continuous function. Let $x_0 \in X$ and suppose $\psi_{x_0}$ is excised by a decreasing sequence of projections $(P_n)_{n \in \bbN}$ in $\fA$ such that $\psi_{x_0}(P_n) = 1$ for all $n \in \bbN$ and such that the range $\pi_x(P_n)\cH_x$ is infinite-dimensional for all $x \in X$ and $n \in \bbN$, where $(\cH_x, \pi_x)$ is the GNS representation of $\psi_x$. Then there exists a continuous map $V:X \times [0,1) \rightarrow \Unitary(\fA)$ such that 
\begin{enumerate}
	\item $V(x,0) = \1$ for all $x \in X$,
	\item $V(x_0, t) = \1$ for all $t \in [0,1)$,
	\item the map $H:X \times I \rightarrow \sP(\fA)$ defined by
	\begin{equation}\label{eq:H_nulhomotopy}
	H(x,t) = \begin{cases} V(x,t)\psi_x &\tn{for } t < 1 \\ \psi_{x_0} &\tn{for } t = 1 \end{cases}
	\end{equation}
	is a weak*-continuous nulhomotopy of $\psi$ relative to $x_0$.
\end{enumerate}
\end{thm}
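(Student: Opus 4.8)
The plan is to iterate Theorem \ref{thm:Michael_application}, using the decreasing sequence $(P_n)$ to ``zoom in'' on the base point exactly as in the degree-zero discussion of the introduction. Set $P_0 = \1$, $\fA_0 = \fA$, $\psi^{(0)} = \psi$. Inductively suppose $\psi^{(n-1)}:X \to \sP(\fA)$ is weak*-continuous with $\psi^{(n-1)}_x(P_{n-1}) = 1$ for all $x$ and $\psi^{(n-1)}_{x_0} = \psi_{x_0}$. Since $\psi^{(n-1)}_x$ is unitarily conjugate to $\psi_x$, its GNS representation is $(\cH_x,\pi_x)$ (with a new cyclic vector), so $\pi_x(P_n)\cH_x$ is infinite-dimensional; because $P_n \le P_{n-1}$, Lemma \ref{lem:compressed_rep}(c) shows that $\psi^{(n-1)}_x|_{\fA_{n-1}}$, where $\fA_{n-1} = P_{n-1}\fA P_{n-1}$, is pure and its GNS representation is the compression by $P_{n-1}$, in which $P_n$ acts with range $\pi_x(P_n)\cH_x$. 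Thus I would apply Theorem \ref{thm:Michael_application} to the unital $C^*$-algebra $\fA_{n-1}$, the projection $P_n \in \fA_{n-1}$, and the weak*-continuous family $x \mapsto \psi^{(n-1)}_x|_{\fA_{n-1}}$, obtaining a $\Unitary(\fA_{n-1})$-homotopy $W_n:X \times I \to \Unitary(\fA_{n-1})$ relative to $\qty{x: \psi^{(n-1)}_x(P_n) = 1}$ with $\psi^{(n-1)}_x(W_n(x,1)^*P_nW_n(x,1)) = 1$. Put $U_n(x,s) = W_n(x,s) + (\1 - P_{n-1}) \in \Unitary(\fA)$ and $\psi^{(n)}_x = \psi^{(n-1)}_x \circ \Ad(U_n(x,1)^*)$. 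Since $P_n \le P_{n-1}$ one has $U_n(x,1)^*P_nU_n(x,1) = W_n(x,1)^*P_nW_n(x,1)$, so $\psi^{(n)}_x(P_n) = 1$; and $U_n(x_0,\cdot) = \1$ because $\psi_{x_0}(P_n) = 1$, so $\psi^{(n)}_{x_0} = \psi_{x_0}$ and $\psi^{(n)}$ is weak*-continuous. The key extra feature is that $U_n(x,s)$ commutes with $P_{n-1}$ for \emph{every} $s$.

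Next I would splice these homotopies on the intervals $[t_{n-1}, t_n]$, $t_n = 1 - 2^{-n}$, as in the introduction. Writing $G_0(x) = \1$ and $G_n(x) = U_n(x,1)G_{n-1}(x)$, define $V(x,t) = U_n(x,s)G_{n-1}(x)$ for $t \in [t_{n-1},t_n]$ with $s = (t - t_{n-1})/(t_n - t_{n-1})$. Because $U_n(x,0) = \1$, the pieces agree at the joints $t_n$, so $V:X \times [0,1) \to \Unitary(\fA)$ is well defined and continuous, with $V(x,0) = \1$ and $V(x_0,t) = \1$. Let $H$ be as in \eqref{eq:H_nulhomotopy}. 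On $X \times [0,1)$, $H$ is weak*-continuous since it is the composite of $(x,t) \mapsto (V(x,t),\psi_x)$ with the jointly continuous action $(U,\phi) \mapsto \phi \circ \Ad(U^*)$ (cf.\ \cite[Prop.~4.5]{beaudry2023homotopical}), and it takes values in $\sP(\fA)$ since unitary conjugation preserves purity. Moreover $H(x,0) = \psi_x$, $H(\cdot,1) \equiv \psi_{x_0}$, and $H(x_0,\cdot) \equiv \psi_{x_0}$.

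The one substantive point is weak*-continuity of $H$ at the points $(x,1)$. For $t \in [t_{n-1},t_n]$ we have $H(x,t) = \psi^{(n-1)}_x \circ \Ad(U_n(x,s)^*)$, and since $U_n(x,s)$ commutes with $P_{n-1}$ while $\psi^{(n-1)}_x(P_{n-1}) = 1$, this state still evaluates to $1$ on $P_{n-1}$. Any state $\phi$ on $\fA$ with $\phi(P_m) = 1$ satisfies $\phi(B) = \phi(P_mBP_m)$ for all $B$, hence
\[
\abs{\phi(A) - \psi_{x_0}(A)} = \abs{\phi\bigl(P_mAP_m - \psi_{x_0}(A)P_m\bigr)} \le \norm{P_mAP_m - \psi_{x_0}(A)P_m},
\]
and the right-hand side tends to $0$ as $m \to \infty$ by the excision hypothesis. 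Given $A \in \fA$ and $\varepsilon > 0$, choose $M$ so this norm is $< \varepsilon$ for all $m \ge M$; then for every $x \in X$ and every $t \in [t_M,1)$ the state $H(x,t)$ evaluates to $1$ on some $P_m$ with $m \ge M$, so $\abs{H(x,t)(A) - \psi_{x_0}(A)} < \varepsilon$, and this holds trivially at $t=1$ as well. Thus $X \times [t_M,1]$ is a neighborhood of $(x,1)$ on which $\abs{H(\cdot,\cdot)(A) - \psi_{x_0}(A)} < \varepsilon$, giving continuity at $(x,1)$. Combining with the previous paragraph, $H$ is a weak*-continuous nulhomotopy of $\psi$ relative to $x_0$, and $V$ has the asserted properties.

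I expect the main obstacle to be the bookkeeping needed to keep the estimate of the third paragraph valid \emph{throughout} each subinterval rather than only at its endpoints: this is precisely what forces $U_n$ to lie in the corner $P_{n-1}\fA P_{n-1}$ plus the identity on its complement, so that every intermediate state of the $n$-th stage still has $P_{n-1}$-expectation $1$. The secondary technical point is verifying that the infinite-dimensionality hypothesis of Theorem \ref{thm:Michael_application} is inherited under restriction to the corner and under unitary conjugation, which is handled by Lemma \ref{lem:compressed_rep}(c) together with $P_n \le P_{n-1}$.
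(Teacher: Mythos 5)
Your proposal is correct and follows essentially the same route as the paper: iterate Theorem \ref{thm:Michael_application} on the corners $P_{n-1}\fA P_{n-1}$ (using Lemma \ref{lem:compressed_rep}(c) for purity and the infinite-dimensionality of the compressed range), extend each unitary homotopy by $\1 - P_{n-1}$, splice on dyadic intervals, and use the commutation with $P_{n-1}$ together with excision to get weak*-continuity at $t=1$. The only cosmetic difference is that you verify $H(x,t)(P_{n-1})=1$ directly from the current-stage conjugation $H(x,t)=\psi^{(n-1)}_x\circ\Ad(U_n(x,s)^*)$, whereas the paper commutes $P_n$ past the whole product of unitaries; the argument is the same.
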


\begin{proof}
Applying Theorem \ref{thm:Michael_application} with $P = P_1$, we get a $\Unitary(\fA)$-homotopy $U_1:X \times I \rightarrow \Unitary(\fA)$ relative to $\qty{x_0}$ such that \eqref{eq:end_of_homotopy} holds for all $x \in X$. Define $\psi^1:X \rightarrow \sP(\fA)$ by $\psi_x^1 = U_1(x,1)\psi_x$ for all $x \in X$. Then $\psi^1$ is weak*-continuous, $\psi_{x_0}^1 = \psi_{x_0}$, and $\psi^1_x(P_1) = 1$ for all $x \in X$.

Set $\psi^0 \defeq \psi$ and $P_0 = \1$. Suppose for some $n \in \bbN$ we have constructed weak*-continuous maps $\psi^1,\ldots, \psi^n:X \rightarrow \sP(\fA)$ such that $\psi^i_{x_0} = \psi_{x_0}^0$ and $\psi^i_x(P_i) = 1$ for all $i \leq n$ and $x \in X$, and suppose further that for each $1 \leq i \leq n$ we have constructed a $\Unitary(\fA)$-homotopy $U_i: X \times I \rightarrow \Unitary(\fA)$ relative to $\qty{x_0}$ from $\psi^{i-1}$ to $\psi^i$ such that 
\begin{equation}\label{eq:U(A)_homotopy_matrix}
U_i(x,t) = P_{i-1}U_i(x,t)P_{i-1} + \1 - P_{i-1}
\end{equation}
for all $1 \leq i \leq n$ and all $(x,t) \in X \times I$. We construct $\psi^{n+1}$ and $U_{n+1}$.

Since $\psi^n_x(P_n) = 1$, we know by Lemma \ref{lem:compressed_rep} \ref{ite:c} that $\psi^n_x|_{P_n\fA P_n}$ is pure for all $x \in X$.  Moreover, the map $x \mapsto \psi^n_x|_{P_n\fA P_n}$ is weak*-continuous since $\psi^n_x$ is weak*-continuous. Since the sequence $(P_n)_{n \in \bbN}$ is decreasing, we know $P_{n+1} = P_n P_{n+1} P_n \in P_n \fA P_n$. Fix $x \in X$ and let $(\cK, \rho)$ and $(\cL, \sigma)$ be the GNS representations of $\psi^n_x|_{P_n \fA P_n}$ and $\psi^n_x$, respectively. We claim that $\rho(P_{n+1})\cK$ is infinite-dimensional. By Lemma \ref{lem:compressed_rep} \ref{ite:c} there exists a unitary $W_1:\cK \rightarrow \sigma(P_n)\cL$ intertwining $\rho$ and $\sigma_{P_n}$. Since $\psi^n_x$ is unitarily equivalent to $\psi_x$, there exists a unitary $W_2:\cL \rightarrow \cH_x$ intertwining $\sigma$ and $\pi_x$. It is then easily seen that $W_2^*\pi_x(P_{n+1})\cH_x \subset \sigma(P_n)\cL$ and $\rho(P_{n+1})\cK = W_1^*W_2^*\pi_x(P_{n+1})\cH_x$. Thus, since $\pi_x(P_{n+1})\cH_x$ is infinite-dimensional, so is $\rho(P_{n+1})\cK$.

By Theorem \ref{thm:Michael_application}, there exists a $\Unitary(P_n\fA P_n)$-homotopy $\tilde U_{n+1}:X \times I \rightarrow \Unitary(P_n\fA P_n)$ relative to $\qty{x_0}$ such that 
\[
\psi^n_x|_{P_n\fA P_n}(\tilde U_{n+1}(x,1)^*P_{n+1}\tilde U_{n+1}(x,1)) = 1
\]
for all $x \in X$. Define $U_{n+1}:X \times I \rightarrow \Unitary(\fA)$ by $U_{n+1}(x, t) = \tilde U_{n+1}(x,t) + \1 - P_n$ and define $\psi^{n+1}:X \rightarrow \sP(\fA)$ by $\psi^{n+1}_x = U_{n+1}(x,1)\psi^n_x$. Note that $U_{n+1}$ is a $\Unitary(\fA)$-homotopy relative to $\qty{x_0}$ from $\psi^n$ to $\psi^{n+1}$. Furthermore, $\psi^{n+1}_{x_0} = \psi^0_{x_0}$ and since $(\1 - P_n)P_{n+1} = P_{n+1}(\1 - P_n) = 0$, we have $\psi_x^{n+1}(P_{n+1}) = 1$ for all $x \in X$.

By recursion we obtain for each $i \in \bbN$ a weak*-continuous $\psi^i:X \rightarrow \sP(\fA)$ such that $\psi^i_{x_0} = \psi_{x_0}$ and $\psi^i_x(P_i) = 1$ for all $x \in X$, and we also obtain a $\Unitary(\fA)$-homotopy $U_i:X \times I \rightarrow \Unitary(\fA)$ relative to $\qty{x_0}$ from $\psi^{i-1}$ to $\psi^i$, such that \eqref{eq:U(A)_homotopy_matrix} holds for all $i$, $x$, and $t$.

We now define $V: X \times [0,1) \to  \Unitary(\fA)$ as follows. Given $i \in \bbN$, define $V$ on $X \times [1 - 2^{-i+1}, 1-2^{-i}]$ as
\[
V({x,t}) = U_i(x,s) U_{i-1}({x,1}) U_{i-2}({x,1}) \cdots U_1({x,1}), \quad \tn{ where $s = \frac{t - 1 + 2^{-i+1}}{2^{-i+1} - 2^{-i}}$}.
\]
Since $U_i({x,0}) = \1$ for all $x$ and $i$, we see that 
\begin{align*}
V({x,1-2^{-i+1}})= U_{i-1}({x,1})U_{i-2}({x,1})\cdots U_1({x,1}).
\end{align*}
Note also that
\[
V({x,1-2^{-i}}) = U_i({x,1})U_{i-1}({x,1}) \cdots U_1({x,1}).
\]
Thus, the functions $V: X \times [1-2^{-i+1}, 1-2^{-i}] \to  \fA$ glue together to a continuous function $V: X \times [0,1) \to  \fA$. Note that $V({x,0}) = \1$ for all $x \in X$ and $V({x_0, t}) = \1$ for all $t \in I$.

Finally, we define $H : X \times I \to  \sP(\fA)$ as in \eqref{eq:H_nulhomotopy}.
Since $V({x,0}) = \1$ for all $x$, we see that $H(x, 0) = \psi_x$. Since $V(x_0, t) = \1$ for all $t \in [0,1)$,  we see that $H(x_0,t) = \psi_{x_0}$ for all $t \in I$. Clearly $H(x,1)$ is constant at $\psi_{x_0}$. All that remains is to show weak*-continuity of $H$.

Let $B \in \fA$. On $X \times [0,1)$, we have $H(x,t)(B) = (V(x,t)\psi_x)(B)$, which is continuous since $V$ is continuous and $\psi_x$ is weak*-continuous. Thus it remains to show continuity of $(x,t) \mapsto H(x, t)(B)$ at an arbitrary point $(y,1)$ with $y \in X$. Fix $\varepsilon > 0$. Observe that for any $x \in X$, any $n, i \in \bbN$ with $i > n + 1$ and any $t \in [1-2^{-i+1}, 1-2^i]$, we have
\begin{align*}
H(x,t)(P_n) &= (V({x,t}) \psi_x)(P_n)\\
&= \qty[\qty(U_i({x,s}) U_{i-1}({x,1})\cdots U_{n+1}({x,1})) \cdot \qty(U_n({x,1}) \cdots U_1({x,1})) \psi_x](P_n)\\
&= \qty[\qty(U_i({x,s})  U_{i-1}({x,1}) \cdots U_{n+1}({x,1})) \cdot \psi^n_x](P_n) \\
&= \psi^n_x(P_n) = \psi_{x_0}(P_n)=1,
\end{align*}
where we have used the fact that $[U_i(x,t), P_n] = 0$ for all $i > n$ and all $(x,t) \in X \times I$, as follows from \eqref{eq:U(A)_homotopy_matrix} and the fact that the sequence $(P_n)_{n \in \bbN}$ is decreasing. Choose $n \in \bbN$ such that $\norm{P_nBP_n - \psi_{x_0}(B)P_n} < \varepsilon$ and set $i = n+2$. Then for any $(x,t) \in X \times [1 - 2^{-i+1}, 1]$, we have
\begin{align*}
\abs{H(x,t)(B) - H(y,1)(B)} &= \abs{H(x,t)(P_nBP_n) - \psi_{x_0}(B)}\\
&= \abs{H(x,t)(P_nBP_n) - H(x,t)(\psi_{x_0}(B)P_n)}\\
&\leq \norm{P_nBP_n - \psi_{x_0}(B)P_n} < \varepsilon.
\end{align*}
This proves continuity at $(y,1)$, completing the proof.
\end{proof}

Taking $X$ to be an $n$-sphere yields the following corollary, giving weak contractibility of the set of pure states with the weak* topology. By passing to the unitization, we are able to deal with non-unital $C^*$-algebras as well, with the exception of the compact operators on an infinite-dimensional Hilbert space. Recall that a $C^*$-algebra is called \textit{elementary} if it is $*$-isomorphic to the compact operators on some Hilbert space and \textit{nonelementary} otherwise.

\begin{cor}\label{cor:final_result}
If $\fA$ is a nonelementary, separable, simple $C^*$-algebra with real rank zero, then $\sP(\fA)$ with the weak* topology is weakly contractible, i.e, all homotopy groups of $\sP(\fA)$ are trivial.
\end{cor}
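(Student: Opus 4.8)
The plan is to deduce the corollary from Theorem~\ref{thm:iteration_nulhomotopy} by specializing its parameter space $X$ to a sphere. Since $\sP(\fA)$ is nonempty, weak contractibility amounts to showing that for each $n \geq 0$ and each weak*-continuous map $\psi \co \bbS^n \to \sP(\fA)$, after fixing a basepoint $x_0 \in \bbS^n$ the map $\psi$ is nulhomotopic relative to $x_0$: for $n = 0$ this is path-connectedness, and for $n \geq 1$ it gives vanishing of $\pi_n(\sP(\fA), \psi_{x_0})$ for every based map and hence, since $\sP(\fA)$ is path-connected, for every basepoint. The sphere $\bbS^n$ is compact Hausdorff of Lebesgue covering dimension $n < \infty$, so it is an admissible choice of $X$.

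First assume $\fA$ is unital. A finite-dimensional simple $C^*$-algebra is a matrix algebra and hence elementary, so $\fA$ is infinite-dimensional. Fix $\psi$ and $x_0$ as above. The hereditary $C^*$-subalgebra $\fN \cap \fN^*$ associated to the left kernel $\fN = \qty{A \in \fA : \psi_{x_0}(A^*A) = 0}$ is separable, being a sub-$C^*$-algebra of the separable $\fA$, hence $\sigma$-unital; so Proposition~\ref{prop:excising_sequence} supplies a decreasing sequence of projections $(P_n)_{n \in \bbN}$ excising $\psi_{x_0}$ with $\psi_{x_0}(P_n) = 1$, in particular each $P_n \neq 0$. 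For any $x \in \bbS^n$ the GNS representation $(\cH_x, \pi_x)$ of the pure state $\psi_x$ is irreducible, and it is faithful because $\fA$ is simple, so $\pi_x(P_n) \neq 0$; by the remark following Theorem~\ref{thm:Michael_application} (which rests on \cite[Thm.~2.4.9]{MurphyCAOT}), $\pi_x(P_n)\cH_x$ is then infinite-dimensional. Thus all hypotheses of Theorem~\ref{thm:iteration_nulhomotopy} hold, and it produces a weak*-continuous nulhomotopy of $\psi$ relative to $x_0$. This settles the unital case.

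Now assume $\fA$ is non-unital, and pass to the unitization $\tilde{\fA} = \fA + \bbC\1$, which is unital, separable, and of real rank zero (the last by the definition of real rank zero for non-unital algebras recalled above). Restriction of states identifies $\sP(\fA)$ with the open subset $\sP(\tilde{\fA}) \setminus \qty{\omega_\infty}$ of $\sP(\tilde{\fA})$, where $\omega_\infty$ is the character of $\tilde{\fA}$ vanishing on $\fA$; since $\omega_\infty$ is multiplicative, $\omega_\infty \circ \Ad(U) = \omega_\infty$ for every $U \in \Unitary(\tilde{\fA})$. Given a weak*-continuous $\psi \co \bbS^n \to \sP(\fA) \subset \sP(\tilde{\fA})$ and a basepoint $x_0$, Proposition~\ref{prop:excising_sequence} applied to $\tilde{\fA}$ (whose separability again makes the relevant hereditary subalgebra $\sigma$-unital) yields a decreasing sequence of projections $(P_n)_{n \in \bbN}$ in $\tilde{\fA}$ excising $\psi_{x_0}$ with $\psi_{x_0}(P_n) = 1$. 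Each $P_n$ projects to $0$ or $1$ in $\tilde{\fA}/\fA \cong \bbC$: in the first case $P_n \in \fA$ is a nonzero projection, and in the second $\1 - P_n = Q_n \in \fA$ is a projection and $P_n\tilde{\fA}P_n$ is the unitization of the nonzero hereditary $C^*$-subalgebra $(\1 - Q_n)\fA(\1 - Q_n)$ of $\fA$, which is infinite-dimensional — otherwise it would be a matrix algebra, forcing $\fA$ to be Morita equivalent to $\bbC$ and hence elementary. Since $\fA$ is simple and nonelementary, its image under any irreducible representation contains no nonzero compact operators, and one concludes that $\pi_x(P_n)\cH_x$ is infinite-dimensional for all $x \in \bbS^n$ and all $n \in \bbN$. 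Applying Theorem~\ref{thm:iteration_nulhomotopy} to $\tilde{\fA}$ gives a weak*-continuous nulhomotopy $H \co \bbS^n \times I \to \sP(\tilde{\fA})$ of $\psi$ relative to $x_0$ with $H(x,t) = \psi_x \circ \Ad(V(x,t)^*)$ for $t < 1$, where $V(x,t) \in \Unitary(\tilde{\fA})$, and $H(x,1) = \psi_{x_0}$. Because $\Ad$ fixes $\omega_\infty$ and $\psi_x \neq \omega_\infty$, the homotopy never equals $\omega_\infty$, so it takes values in $\sP(\fA)$; this proves the non-unital case. (The case of $\fA$ infinite-dimensional and elementary, excluded here, is a $K(\bbZ,2)$ by Proposition~\ref{prop:K(Z,2)}.)

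The substantive work is entirely in the earlier sections — the equi-$LC^n$ estimates of Section~\ref{sec:equiLCn}, the two applications of Michael's selection theorems, and the countable iteration packaged in Theorem~\ref{thm:iteration_nulhomotopy}. For the corollary itself the only genuine checks are that the hypotheses of Theorem~\ref{thm:iteration_nulhomotopy} are met — above all the infinite-dimensionality of $\pi_x(P_n)\cH_x$ for the excising projections — and, in the non-unital case, the bookkeeping that keeps the nulhomotopy away from the point at infinity. I expect the Morita-equivalence step ruling out a finite-dimensional corner to be the most delicate point, but it is standard.
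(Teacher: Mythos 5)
Your proof is correct, and its skeleton is the same as the paper's: specialize $X=\bbS^n$ in Theorem \ref{thm:iteration_nulhomotopy}, feed it the excising sequence supplied by Proposition \ref{prop:excising_sequence}, and handle the non-unital case by unitization. Where you genuinely differ is in the two non-unital verifications, so a comparison is worthwhile. For descending the nulhomotopy from $\sP(\tilde{\fA})$ to $\sP(\fA)$ you use the homeomorphism $\sP(\fA)\cong\sP(\tilde{\fA})\setminus\{\omega_\infty\}$ together with the $\Ad$-invariance of the character $\omega_\infty$; the paper instead checks directly that each $H(x,t)\circ\iota$ is pure by exhibiting its GNS data $(\cH_x,\pi_x,\tilde\pi_x(V(x,t))\Omega_x)$. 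Your route is arguably cleaner and gets the weak*-continuity of the restricted homotopy (including at $t=1$) for free from the homeomorphism. For the infinite-dimensionality of $\pi_x(P_n)\cH_x$ you split on the scalar part of $P_n$ and, in the case $\1-P_n\in\fA$, invoke Brown's theorem to rule out a finite-dimensional corner (Morita equivalence with $\bbC$ would force $\fA$ elementary), then conclude from the fact that an irreducible representation of a simple nonelementary $C^*$-algebra contains no nonzero compact operators; the paper reaches the same conclusion more economically via Kadison transitivity and \cite[Thm.~2.4.9]{MurphyCAOT}. In fact your Morita detour is unnecessary: once you record that $P_n\fA P_n\neq 0$ (a one-line consequence of $\psi_{x_0}(P_n)=1$, which you assert but do not verify) and that $\pi_x|_{\fA}$ is faithful by simplicity, finite-dimensionality of $\pi_x(P_n)\cH_x$ would already make $\pi_x(P_nAP_n)$ a nonzero finite-rank element of $\pi_x(\fA)$ for a suitable $A\in\fA$, contradicting your no-compacts fact directly — essentially the paper's argument. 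Finally, you recover path-connectedness from the $n=0$ case of the same machinery rather than citing Eilers as the paper does; that is legitimate (Theorem \ref{thm:iteration_nulhomotopy} applies to the zero-dimensional compact space $\bbS^0$) and makes this point self-contained.
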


\begin{proof}
By \cite[Thm.~1.7, Thm.~5.6, \& Thm.~5.9]{EilCCCA} or \cite[Thm.~1.21]{Spiegel}, we know $\sP(\fA)$ with the weak* topology is path-connected, so the homotopy groups do not depend on the choice of basepoint. Let $\psi_0 \in \sP(\fA)$ be an arbitrary point. Let $n \in \bbN$ and let $\psi:\bbS^n \rightarrow \sP(\fA)$ be a weak*-continuous map with $\psi_{x_0} = \psi_0$, where $x_0$ is the basepoint of $\bbS^n$.

Suppose $\fA$ is unital. Setting $\fN = \qty{A \in \fA: \psi_{0}(A^*A) = 0}$, we know $\fN \cap \fN^*$ is $\sigma$-unital since it is separable, so by Proposition \ref{prop:excising_sequence} $\psi_{x_0}$ can be excised by a decreasing sequence of projections $(P_n)_{n \in \bbN}$ such that $\psi_{x_0}(P_n) = 1$ for all $n \in \bbN$. If $(\cH_x, \pi_x)$ is the GNS representation of $\psi_x$, then the range $\pi_x(P_n)\cH_x$ is infinite-dimensional for all $x \in \bbS^n$ since $\fA$ is simple, separable, and infinite-dimensional (cf.~\cite[Thm.~2.4.9]{MurphyCAOT}). Theorem \ref{thm:iteration_nulhomotopy} now implies that $\psi$ is nulhomotopic relative to $x_0$.

Suppose $\fA$ is non-unital and let $\iota:\fA \rightarrow \tilde \fA$ be the embedding of $\fA$ in its unitization. Each pure state on $\fA$ has a unique extension to a pure state on $\tilde \fA$, and the map taking a pure state of $\fA$ to its extension is weak*-continuous, so we obtain a weak*-continuous map $\tilde \psi:\bbS^n \rightarrow \sP(\fA)$ such that $\tilde \psi_x \circ \iota = \psi_x$ for all $x \in \bbS^n$. As in the previous paragraph, since $\tilde \fA$ is unital, separable, and real rank zero, Proposition \ref{prop:excising_sequence} implies $\tilde \psi_{x_0}$ can be excised by a decreasing sequence of projections $(P_n)_{n \in \bbN}$ such that $\tilde \psi_{x_0}(P_n) = 1$ for all $n \in \bbN$. Let $(\cH_x',  \pi_x')$ be the GNS representation of $\tilde \psi_{x}$ and note that $(\cH_x', \pi_x')$ is unitarily equivalent to $(\cH_x, \tilde \pi_x)$, where $(\cH_x, \pi_x)$ is the GNS representation of $\psi_x$ and $\tilde \pi_x$ is the unique extension of $\pi_x$ to a unital $*$-homomorphism $\tilde \fA \rightarrow \B(\cH_x)$.

Suppose $\pi_x'(P_n)\cH_x'$ is finite-dimensional for some $x \in X$ and $n \in \bbN$. Then so is $\tilde \pi_x(P_n)\cH_x$ by unitary equivalence. By the Kadison transitivity theorem, there exists $A \in \fA$ such that $\pi_x(A)\Omega_x = \Omega_x$, where $\Omega_x \in \cH_x$ is the distinguished cyclic vector representing $\psi_x$. Since $\Omega_x$ also represents $\tilde \psi_x$ and $\tilde \psi_x(P_n) = 1$, we know $\tilde \pi_x(P_n)\Omega_x = 1$. Then $P_n \iota(A) = \iota(B)$ for some $B \in \fA$ and $\tilde \pi_x(P_n \iota(A))\Omega_x = \Omega_x$, hence $\pi_x(B)$ is a nonzero finite-rank operator. By \cite[Thm.~2.4.9]{MurphyCAOT}, this implies $\cK(\cH_x) \subset \pi_x(\fA)$, hence $\cK(\cH_x) = \pi_x(\fA)$ by simplicity of $\pi_x(\fA)$. But this contradicts that $\fA$ is nonelementary. Thus, $\pi_x'(P_n)\cH_x'$ is infinite-dimensional for all $x \in X$ and $n \in \bbN$. 

With respect to $\tilde \psi$, we now obtain a continuous map $V:\bbS^n \times [0,1) \rightarrow \Unitary(\tilde \fA)$ as in Theorem \ref{thm:iteration_nulhomotopy}, with associated nulhomotopy $H$. The functionals $(V(x,t)\tilde \psi_x) \circ \iota$ are pure states on $\fA$ since they are represented by the irreducible representation and unit vector $(\cH_x, \pi_x, \tilde \pi_x(V(x,t))\Omega_x)$. Thus, $\bbS^n \times I \rightarrow \sP(\fA)$, $(x,t) \mapsto H(x,t) \circ \iota$ yields a well-defined weak*-continuous nulhomotopy of $\psi$, as desired.
\end{proof}

The case when $\fA = \cK(\cH)$, the set of compact operators on an infinite-dimensional Hilbert space $\cH$, may be easily synthesized from well-known results. We do not need to restrict to separable Hilbert spaces.

\begin{prop}\label{prop:K(Z,2)}
If $\fA$ is infinite-dimensional and elementary, then the weak* topology on $\sP(\fA)$ coincides with the norm topology on $\sP(\fA)$ and $\sP(\fA)$ is an Eilenberg-MacLane space of type $K(\bbZ,2)$.
\end{prop}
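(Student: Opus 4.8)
The plan is to reduce everything to the case $\fA = \cK(\hilbH)$ for an infinite-dimensional Hilbert space $\hilbH$ (possibly nonseparable), since every infinite-dimensional elementary $C^*$-algebra is $*$-isomorphic to such an algebra, and a $*$-isomorphism induces a homeomorphism on pure state spaces in both the weak* and norm topologies. So assume $\fA = \cK(\hilbH)$. Recall that the pure states of $\cK(\hilbH)$ are exactly the vector states $\omega_\Psi(A) = \ev{\Psi, A\Psi}$ for $\Psi \in \bbS\hilbH$, and $\omega_\Psi = \omega_{\Psi'}$ if and only if $\Psi' = \lambda \Psi$ for some $\lambda \in \Unitary(1)$. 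Thus as a set $\sP(\fA)$ is in bijection with the projective space $\bbP(\hilbH) = \bbS\hilbH/\Unitary(1)$.

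\textbf{Step 1: the two topologies agree.} First I would show the quotient map $q\co \bbS\hilbH \to \sP(\fA)$, $\Psi \mapsto \omega_\Psi$, is continuous and open for the weak* topology and also for the norm topology, and that both topologies on $\sP(\fA)$ are thereby identified with the quotient topology from the norm topology on $\bbS\hilbH$. For the norm side this is essentially the estimate $\norm{\omega_\Psi - \omega_{\Psi'}} \le 2\norm{\Psi - \Psi'}$ together with the reverse bound showing $\norm{\omega_\Psi - \omega_{\Psi'}}$ controls $\min_{\lambda}\norm{\Psi - \lambda\Psi'}$ (a standard computation: $\norm{\omega_\Psi - \omega_{\Psi'}}^2 = 4(1 - \abs{\ev{\Psi,\Psi'}}^2)$). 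For the weak* side, weak* convergence $\omega_{\Psi_\alpha} \to \omega_\Psi$ tested against rank-one operators $\ketbra{\Psi}{\Psi}$ forces $\abs{\ev{\Psi,\Psi_\alpha}} \to 1$, hence norm convergence of $\Psi_\alpha$ to $\Unitary(1)\Psi$ in $\bbP(\hilbH)$; conversely norm continuity of each $A$ on $\bbS\hilbH$ gives weak* continuity of $q$. Since the weak* topology is always coarser than the norm topology on the state space and we have just shown they induce the same quotient, they coincide. Concretely: a weak*-open set pulls back to a norm-open (indeed weak*-open) saturated set of $\bbS\hilbH$, which descends to a norm-open set of $\sP(\fA)$; combined with the trivial inclusion this yields equality.

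\textbf{Step 2: identify the homotopy type.} With the topologies identified, $\sP(\fA) \cong \bbP(\hilbH)$ with the norm topology, i.e.\ the projectivization of an infinite-dimensional Hilbert space. The map $q\co \bbS\hilbH \to \bbP(\hilbH)$ is a principal $\Unitary(1)$-bundle — indeed this is a special case of Corollary \ref{cor:principal_bundle} applied to $\fA = \cK(\hilbH)$, $\pi = \id$, whose stabilizer $\Unitary(\fA)_\Psi$ contains the scalars, but more directly one writes down explicit local sections $\Omega \mapsto (\Psi + $ correction$)/\norm{\cdot}$ on the chart $\{\ev{\Psi,\Omega} \ne 0\}$. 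Since $\bbS\hilbH$ is contractible (Kuiper/standard for infinite-dimensional Hilbert space — this is exactly the fact invoked in Lemma \ref{lem:pi(P)x=0}), the long exact sequence of the fibration $\Unitary(1) \to \bbS\hilbH \to \bbP(\hilbH)$ gives $\pi_k(\bbP(\hilbH)) \cong \pi_{k-1}(\Unitary(1)) = \pi_{k-1}(\bbS^1)$ for all $k$, so $\pi_2(\bbP(\hilbH)) \cong \bbZ$ and $\pi_k(\bbP(\hilbH)) = 0$ for $k \ne 2$. A CW (or at least the weak-homotopy) argument, or the fact that $\bbP(\hilbH)$ has the homotopy type of $\bbC P^\infty$ (e.g.\ via $\colim \bbC P^n$ for a fixed increasing exhaustion by finite-dimensional subspaces, using that these inclusions are cofibrations and compactly generate all homotopy), then identifies $\sP(\fA)$ as a $K(\bbZ,2)$.

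\textbf{Main obstacle.} The genuinely nontrivial point is not the homotopy-type computation (which is classical once the bundle structure is in hand) but the topological bookkeeping in Step 1: carefully checking that the weak* and norm quotient topologies on $\sP(\fA)$ genuinely coincide, including the nonseparable case where one cannot rely on metrizability or sequential arguments and must argue with nets and saturated open sets. The other point requiring a word of care is asserting the $K(\bbZ,2)$ conclusion rather than merely computing homotopy groups: one should either cite Milnor's theorem that spaces homotopy equivalent to CW complexes are classified up to homotopy by their homotopy groups when they are Eilenberg--MacLane, or exhibit an explicit homotopy equivalence $\bbP(\hilbH) \simeq \bbC P^\infty$ — I would do the latter by fixing an orthonormal basis, letting $\hilbH_n$ be the span of the first $n$ vectors, and showing $\bbP(\hilbH) = \colim \bbP(\hilbH_n) = \colim \bbC P^{n-1} = \bbC P^\infty$ as a consequence of the fact that any compact subset of $\bbP(\hilbH)$, and any compact subset of a sphere in $\hilbH$, lies in some $\hilbH_n$ up to arbitrarily small norm perturbation — plus the standard fact that $\bbC P^\infty$ is a $K(\bbZ,2)$.
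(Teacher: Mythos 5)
Your proposal is correct and takes essentially the same route as the paper: identify $\sP(\cK(\hilbH))$ with the projective space $\bbP\hilbH$, show the weak* and norm topologies coincide using the formula $\norm{\omega_\Psi-\omega_\Omega}=2\sqrt{1-\abs{\ev{\Psi,\Omega}}^2}$ (the paper cites Roberts--Roepstorff for this and tests the weak* topology against the rank-one projection $\ketbra{\Omega}{\Omega}$, exactly as you do), and then compute the homotopy groups from the $\Unitary(1)$-bundle $\bbS\hilbH\to\bbP\hilbH$ with contractible total space. The only caveat is your optional final remark: the literal identification $\bbP(\hilbH)=\colim\bbP(\hilbH_n)$ fails (the union misses vectors when $\hilbH$ is nonseparable and the colimit topology differs from the norm topology), but this extra step is unnecessary, since the paper -- like the statement -- takes ``$K(\bbZ,2)$'' to mean precisely that $\pi_2\cong\bbZ$ and all other homotopy groups vanish.
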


\begin{proof}
There exists an infinite-dimensional Hilbert space $\cH$ such that $\fA$ is $*$-isomorphic to $\cK(\cH)$, the set of compact operators on $\cH$. It is known that $\sP(\cK(\cH))$ consists precisely of the vector states $\omega(A) = \ev{\Omega, A\Omega}$ for unit vectors $\Omega \in \bbS \cH$  (see e.g., \cite[Example~5.1.1]{MurphyCAOT}) and that, with the norm topology, $\sP(\cK(\cH))$ is homeomorphic to the projective Hilbert space $\bbP \cH$, endowed with the quotient of the norm topology on $\bbS \cH$ (see e.g., \cite[Cor.~2.2]{Spiegel}). The projective Hilbert space $\bbP \cH$ is a $K(\bbZ, 2)$, as follows from the contractibility of $\bbS \cH$, the fact that $\bbS \cH$ is a fiber bundle over $\bbP \cH$ with typical fiber $\Unitary(1)$ (see e.g., \cite[Thm.~2.6]{Spiegel}), and the long exact sequence on fiber bundles \cite[Sec.~17]{Steenrod}. 

All that remains to do is show that the weak* topology on $\sP(\cK(\cH))$ coincides with the norm topology. Fix $\omega \in \sP(\cK(\cH))$ and let $\Omega \in \bbS\cH$ such that $\omega(A) = \ev{\Omega, A\Omega}$. Given $\varepsilon > 0$, if $\psi \in \sP(\cK(\cH))$ is represented by $\Psi \in \bbS \cH$ and $\psi(\ketbra{\Omega}) = \abs{\ev{\Psi, \Omega}}^2 > 1 - \varepsilon$, then by \cite[Prop.~4.6]{RobertsRoepstorffSBCAQT} we have
\[
\norm{\psi - \omega} = 2\sqrt{1 - \abs{\ev{\Psi, \Omega}}^2} < 2 \sqrt{\varepsilon}.
\]
We see that $\qty{\psi \in \sP(\cK(\cH)): \psi(\ketbra{\Omega}) > 1 - \varepsilon}$ is a neighborhood of $\omega$ in the weak* topology contained in the open ball centered on $\omega$ of radius $2\sqrt{\varepsilon}$. Since $\omega$ and $\varepsilon$ were arbitrary, this proves that the norm topology on $\sP(\cK(\cH))$ is coarser than the weak* topology, and the converse is trivial.
\end{proof}


\section{Comparison with the Cases of Commutative and Rotation Algebras}\label{sec:comparison}

In the case of a commutative unital $C^*$-algebra $\fA$, the space of pure states $\sP(\fA)$ coincides
 with the Gelfand spectrum $\sigma (\fA)$ defined as the space of characters on $\fA$ endowed with the
weak$^*$-topology.  By the commutative Gelfand-Naimark theorem, the functor $\sigma$ provides an
equivalence between the category of commutative  $C^*$-algebras and the category of locally compact Hausdorff spaces.
Its  left (and right) adjoint is given by the functor $C_0$ which associates to a locally compact Hausdorff space $X$ the
$C^*$-algebra of continuous complex-valued functions on $X$ vanishing at infinity. 
This implies that for a locally compact Hausdorff space $X$ the $k$-the homotopy group of the state space
$ \sP (C_0(X))$ coincides with $\pi_k (X)$. 
Hence  the homotopy type of every locally compact Hausdorff space can appear as the homotopy type of the
pure state space of a commutative $C^*$-algebra. This is quite the opposite to the case of nonelementary separable simple $C^*$-algebras $\fA$
of real rank zero which are necessarily  noncommutative and all have weakly contractible $\sP(\fA)$
by our main result. 

One can also see that the homotopy type of the space of pure states is not an invariant under
deformation or of continuous fields of $C^*$-algebras. We explain this in more detail by the example of noncommutative tori. 
Recall, for example from \cite{RieffelNCT}, \cite{ConnesNCG}, \cite{DavidsonCStarExample} or \cite{VarillyEtAl},
that for $\theta \in \R$ the \emph{noncommutative torus} or in other terminology the \emph{rotation algebra} $\fA_\theta$ is defined as the universal unital $C^*$-algebra generated by two unitary elements $U$ and $V$
fulfilling the commutation relation $V U = e^{-2\pi i \theta} UV$.
For irrational $\theta$ one way to construct $\fA_\theta$ is as the $C^*$-subalgebra of $\B (L^2 (\bbS^1))$
generated by the two unitary operators
\begin{align}
  U &: \B ( L^2 (\bbS^1))\to \B ( L^2 (\bbS^1)), \quad Uf(z) = zf(z) \ , \\
  V &: \B ( L^2 (\bbS^1))\to \B ( L^2 (\bbS^1)), \quad Vf(z) = f(ze^{-2\pi i\theta}) \ . 
\end{align}
Alternatively, for all real $\theta$, $\fA_\theta$ can be identified with the
crossed product $C^*$-algebra 
$C (\bbS^1) \rtimes_{\tau_\theta} \Z$ defined by the action
\[
  \tau_\theta : \: \Z \to \operatorname{Aut} \big(C (\bbS^1)\big), \quad 
  \tau(n) (f) (z) = f ( e^{-2\pi n i \theta} z) \ . 
\]
The family  $\big( \fA_\theta \big)_{\theta \in \R}$ then forms a continuous field of $C^*$-algebras in the sense
of Dixmier \cite[Chpt.\ 10]{Dixmier} and can be even understood as a strict or in other words $C^*$-algebraic
deformation quantization of the $2$-torus $\T^2$; see e.g.\ \cite{RieffelNCT}.
In particular this means that $\fA_0$ coincides with the commutative $C^*$-algebra $C (\T^2)$.

The irrational rotation algebras -- which by definition are the  $C^*$-algebras $\fA_\theta$ with
$\theta$ irrational -- are simple \cite[Thm.\ VI.1.4]{DavidsonCStarExample}. Moreover,
by construction it is clear that all such $\fA_\theta$ are separable and infinite-dimensional.
By unitality they are also non-elementary.  
Finally, one knows by \cite[Thm.\ 1.5]{BlackadarKumjianRordam} that the irrational rotation algebras have real
rank $0$. 
By our main result Corollary \ref{cor:final_result}, the homotopy groups $\pi_k (\sP (\fA_\theta ))$ vanish for
irrational $\theta$ and natural $k$ whereas
\[
  \pi_k (\sP (\fA_0)) =
  \begin{cases}
    \Z^2 &\text{for } k =1 \ , \\
    \{ 0 \}& \text{else} \ .
  \end{cases}
\]
This shows that unlike in the case of a locally trivial bundle of $C^*$-algebras the homotopy groups of the pure state
spaces of noncommutative tori are not invariant  along the deformation parameter $\theta$, at least not around around
$\theta =0$.

The situation becomes even ``wilder'' when looking at rational rotation algebras  $\fA_\theta$ for non-integral
values of $\theta$.
So let $\theta = \frac pq$ with $p$ and $q$ divisor free non-zero integers and $q\geq 2$. It is then well-known -- see e.g.\ Example 8.46 in D. Williams' book \cite{WilliamsCrossedProducts} -- that  the spectrum $\hat{\fA}_\theta$ and the
set of primitive ideals $\operatorname{Prim} (\fA_\theta)$ endowed with the hull-kernel topology can be naturally identified
with the 2-torus $\T^2$. Moreover, the function $\lambda : \hat{\fA}_\theta  \to \operatorname{Prim} (\fA_\theta)$
which maps  the unitary equivalence class $[\pi]$ of an irreducible representation $\pi : \fA_\theta \to \B(\cH_\pi)$
to the kernel $\ker \pi$ is a homeomorphism by \cite[3.1.6.\ Prop.]{Dixmier}.
Finally, $\fA_\theta$ is a $q$-homogeneous $C^*$-algebra meaning that all of its
irreducible non-trivial representations act on $\C^q$; see again \cite[Ex.\ 8.46]{WilliamsCrossedProducts}.
By Section 3.2 of Fell's paper \cite{Fell}, $\fA_\theta$ is isomorphic to the $C^*$-algebra
$\Gamma (B_\theta)$ of continuous sections of a locally trivial $\operatorname{Mat}_q(\C)$-bundle $\pi^{B_\theta} :  B_\theta \to \T^2$.
In particular, this is a $C^*$-algebra bundle in the sense of
\cite[Sec.\ 4.1]{Spiegel}. Since the pure state space of $\operatorname{Mat}_q(\C)$ is given by the projective space $\C\bbP^{q-1}$,
forming fiberwise the pure state space in the bundle $\pi^{B_\theta} : B_\theta \to \T^2$ gives rise to a locally
trivial $\C\bbP^{q-1}$-bundle $\pi^{P_\theta}: P_\theta \to \T^2$ by \cite[Sec.\ 4.1]{Spiegel} .

\begin{lem}
  With assumptions and notation as above, the canonical map 
  \[
    \Pi : \:  P_\theta \to \sP (\Gamma ( B_\theta  )), \quad  \omega \mapsto
    \Big( \Gamma ( B_\theta  ) \ni s \mapsto \omega \big( s\big(\pi^{P_\theta}(\omega) \big)\big)  \in \C \Big)
  \]
  is a homeomorphism, where the space of pure states $\sP (\Gamma ( B_\theta  ))$
  is endowed with the weak$^*$-topology.
\end{lem}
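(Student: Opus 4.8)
The plan is to identify $\Pi$ as the comparison map between two ways of building a ``fiberwise pure state space'' over $\T^2$, and then argue that it is a continuous bijection between compact Hausdorff spaces, hence a homeomorphism. First I would set up the two topologies carefully. On the one hand, $P_\theta$ is by construction the total space of a locally trivial $\C\bbP^{q-1}$-bundle obtained by applying the functor $\sP(-) = \sP(\mathrm{Mat}_q(\C))$ fiberwise to the $\mathrm{Mat}_q(\C)$-bundle $\pi^{B_\theta}\colon B_\theta \to \T^2$, following the constructions of \cite[Sec.~4.1]{Spiegel}. On the other hand, $\sP(\Gamma(B_\theta))$ carries the weak$^*$-topology. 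The map $\Pi$ sends a pure state $\omega$ on the fiber $(B_\theta)_{x}$, $x = \pi^{P_\theta}(\omega)$, to the functional $s \mapsto \omega(s(x))$ on $\Gamma(B_\theta)$; this is well-defined because evaluation at $x$ is a surjective $*$-homomorphism $\Gamma(B_\theta) \to (B_\theta)_x \cong \mathrm{Mat}_q(\C)$ (local triviality and a partition of unity give enough sections), so $\omega$ pulls back to a pure state on $\Gamma(B_\theta)$, using that the pullback of a pure state along a surjective $*$-homomorphism is pure.

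Next I would establish surjectivity and injectivity of $\Pi$. Every irreducible representation of $\fA_\theta \cong \Gamma(B_\theta)$ factors through evaluation at some point $x \in \T^2$ (this is exactly the homeomorphism $\hat{\fA}_\theta \cong \mathrm{Prim}(\fA_\theta) \cong \T^2$ recalled just above, together with $q$-homogeneity), and it is irreducible on $(B_\theta)_x \cong \mathrm{Mat}_q(\C)$; hence any pure state $\rho$ on $\Gamma(B_\theta)$ has GNS representation factoring through some $\mathrm{ev}_x$, so $\rho = \omega \circ \mathrm{ev}_x$ for a pure state $\omega$ on the fiber, i.e.\ $\rho = \Pi(\omega)$. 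For injectivity: if $\Pi(\omega) = \Pi(\omega')$, then evaluating on sections supported near a point shows first that $\pi^{P_\theta}(\omega) = \pi^{P_\theta}(\omega')$ (a pure state on $\mathrm{Mat}_q(\C)$ is faithful, so $\omega(s(x)) = 0$ for all $s$ vanishing at $x$ forces the basepoint; alternatively the basepoint is recovered as the primitive ideal, which is the kernel), and then, since $\mathrm{ev}_x$ is onto the fiber, that $\omega = \omega'$ on $(B_\theta)_x$.

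Then I would address continuity of $\Pi$. It suffices to check this locally on $\T^2$: over a trivializing open set $O$ we have $P_\theta|_O \cong O \times \C\bbP^{q-1}$ and sections restrict to continuous $\mathrm{Mat}_q(\C)$-valued functions, so for fixed $s \in \Gamma(B_\theta)$ the map $(x,[\,\Psi\,]) \mapsto \langle \Psi, s(x)\Psi\rangle/\langle\Psi,\Psi\rangle$ is continuous in $(x,[\Psi])$; since the weak$^*$-topology is generated by the evaluations at elements $s$, $\Pi$ is continuous. Finally, $P_\theta$ is compact: it fibers over the compact space $\T^2$ with compact fiber $\C\bbP^{q-1}$ and is locally trivial, hence compact. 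And $\sP(\Gamma(B_\theta))$ is Hausdorff in the weak$^*$-topology as a subspace of the unit ball of the dual. A continuous bijection from a compact space to a Hausdorff space is a homeomorphism, which completes the proof.

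The main obstacle I anticipate is the surjectivity step, i.e.\ verifying cleanly that \emph{every} pure state of $\Gamma(B_\theta)$ arises by pullback along a fiber evaluation; this is where one must genuinely invoke $q$-homogeneity and the identification $\hat{\fA}_\theta \cong \T^2$ rather than just formal nonsense about bundles. A secondary technical point worth spelling out is that $\mathrm{ev}_x\colon \Gamma(B_\theta) \to (B_\theta)_x$ is surjective, which uses local triviality plus a bump function; once that is in hand the rest is routine.
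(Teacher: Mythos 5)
Your overall architecture coincides with the paper's: show $\Pi(\omega)$ is a pure state, prove injectivity and continuity, use compactness of $P_\theta$ (compact base, compact fiber) against the Hausdorff weak$^*$-topology to upgrade a continuous bijection to a homeomorphism (the paper phrases this as $\Pi$ being a closed map). Your well-definedness argument --- pulling back a pure state along the surjective $*$-homomorphism $\mathrm{ev}_z:\Gamma(B_\theta)\to (B_\theta)_z$ --- is a clean variant of the paper's GNS comparison, and your continuity check is the same. One small slip: a pure state on $\operatorname{Mat}_q(\C)$ is \emph{not} faithful for $q\geq 2$ (vector states kill $\ketbra{\Psi}{\Psi}^\perp$), so that parenthetical justification for recovering the footpoint fails as stated; recover the footpoint instead by evaluating on central sections $f\1$ with $f\in C(\T^2)$ separating the two candidate points, or via the kernel, as you also suggest.

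The genuine gap is the surjectivity step, which you yourself flag as the main obstacle but then dispose of by asserting that ``every irreducible representation factors through a fiber evaluation'' is ``exactly'' the recalled identification $\hat\fA_\theta\cong\operatorname{Prim}(\fA_\theta)\cong\T^2$ together with $q$-homogeneity. It is not: that identification is an abstract homeomorphism of topological spaces, and it does not by itself tell you that, under Fell's isomorphism $\fA_\theta\cong\Gamma(B_\theta)$, the primitive ideals are precisely the kernels of the evaluations $\mathrm{ev}_z$ --- which is what you need in order to factor an arbitrary pure state through a fiber. Supplying this link is where the paper spends the bulk of its proof: it uses that the center of $\Gamma(B_\theta)$ is $C(\T^2)\1$ and a Dauns--Hofmann type theorem to obtain the homeomorphism $\mu:\operatorname{Prim}(\Gamma(B_\theta))\to\T^2$, $J\mapsto\mathfrak{m}_J$, so that for a pure state $\varrho$ with $z=\mu\lambda\kappa(\varrho)$ one gets $\ker\pi_\varrho=\mathfrak{m}_z\,\Gamma(B_\theta)$, the quotient is the fiber $(B_\theta)_z$, and $\varrho$ factors through a pure state of that fiber. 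To close your gap you must either reproduce such an argument, or invoke the structure theory of $q$-homogeneous $C^*$-algebras (Fell, Tomiyama--Takesaki) to the effect that every closed two-sided ideal of $\Gamma(B_\theta)$ consists of the sections vanishing on a closed subset, so that primitive ideals correspond to points and irreducible representations to fiber evaluations; as written, the surjectivity step is an assertion rather than a proof.
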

\begin{proof}
  Before proving that $\Pi$ is a homeomorphism we show that $\Pi$ is well-defined meaning that
  for every element $\omega \in P_\theta$ the image $\Pi(\omega)$ is a pure state of $\Gamma (B_\theta)$.
  It is clear that $\Pi(\omega)$ is a state, so it remains to verify that $\Pi(\omega)$ is pure. 
  To this end let $z= \pi^{P_\theta} (\omega)$ be the footpoint of $\omega$,
  $\mathfrak{N}_\omega = \{ A \in (B_\theta)_z \mid \omega (A^*A)=0\}$ the Gelfand ideal
  of $\omega$, and $\mathfrak{N}_{\Pi(\omega)} = \{ s \in \Gamma (B_\theta) \mid \Pi(\omega) ( s^*s ) = 0 \}$
  the Gelfand ideal of $\Pi(\omega)$. By definition of $\Pi$, the left ideal $\mathfrak{N}_{\Pi(\omega)}$ coincides
  with the left ideal $\{ s \in \Gamma (B_\theta) \mid s(z) \in \mathfrak{N}_\omega \}$.
  Hence evaluation at the footpoint $z$ provides an isomorphism between GNS Hilbert spaces
  \[
    \cH_{\Pi(\omega)} :=  \Gamma (B_\theta) / \mathfrak{N}_{\Pi(\omega)} \to \cH_\omega := (B_\theta)_z/\mathfrak{N}_\omega, \quad
    s + \mathfrak{N}_{\Pi(\omega)} \mapsto s(z) + \mathfrak{N}_\omega \ .
  \]
  Note that $\cH_{\Pi(\omega)}$ and $\cH_\omega$ are finite-dimensional and that the isomorphism above commutes with the GNS representations. 
  Since the action of $(B_\theta)_z$ on $\cH_\omega$ is irreducible one concludes that the GNS representation of
  $\Gamma (B_\theta)$ on $\cH_{\Pi(\omega)}$ is irreducible as well, so $\Pi(\omega)$ is pure. 
    
  Injectivity of $\Pi$ follows from the observation that for each base point $z\in \bbS^1$ the fiber $(B_\theta)_z$ separates
  points of the fiber $(P_\theta)_z$. The map  $\Pi$ is continuous since for every $s \in \Gamma ( B_\theta )$ the evaluation map 
  $P_\theta \to \C$, $\omega \mapsto \Pi(\omega) (s)$  is continuous.
  Observe that  $P_\theta$ is compact because both the base space and the typical fiber of $P_\theta$ are compact.
  Hence  $\Pi$ is a closed map.

  It remains to show that $\Pi$ is surjective. To this end we need to recall a few
  well-known facts. First, denoting by $\hat{\Gamma} ( B_\theta  )$ the spectrum of
  $\Gamma ( B_\theta  )$, there exists  a canonical map 
  $\kappa : \sP (\Gamma ( B_\theta  )) \to \hat{\Gamma} ( B_\theta  )$
  which associates to every pure state $\varrho$ the unitary equivalence class
  $[\pi_\varrho]$ of the GNS representation of $\varrho$.
  The map $\kappa$ is continuous, open  and surjective by \cite[Thm.\ 3.4.11]{Dixmier}.
  Second, as already pointed out above, the natural map 
  $\lambda : \hat{\fA}_\theta \to \operatorname{Prim} (\fA_\theta)$, $[\pi] \mapsto \ker \pi$
  is a homeomorphism.  Third, the center of $\Gamma ( B_\theta  )$ is given by the $C^*$-subalgebra
  $C (\T^2) \1$, and the map
  \[
    \begin{split}
      \mu : \:  \operatorname{Prim} (\Gamma ( B_\theta  )) & \to
      \operatorname{Prim} \big( C (\T^2) \big) \cong \sigma \big( C (\T^2) \big) \cong \T^2 , \\
        J  & \mapsto \mathfrak{m}_J := \{ f \in C (\T^2) \mid f \Gamma ( B_\theta  ) \subset J \} \cong J \cap C (\T^2) \1
    \end{split}
  \]
  is a homeomorphism as well. This is a consequence of a variant of the Dauns-Hofmann theorem \cite[3.4.~Thm.]{HofmannToposym2}
  which tells that the pullback by $\mu$ provides an isomorphism between the center of $\Gamma ( B_\theta  )$
  and the $C^*$-algebra of continuous functions on $\operatorname{Prim} (\Gamma ( B_\theta  ))$.
  An explicit proof that $\mu$ is continuous and surjective can be found in Lemma 8.10 of
  the paper \cite{DaunsHofmannMemoirsAMS} which also contains the original statement of the Dauns-Hofmann theorem. 
  Note that in the definition of $\mu$ we used that the spectrum $\sigma \big( C (\T^2)\big)$ coincides with the maximal ideal space of $C (\T^2)$ 
  and that for a primitive ideal $ J \subset \Gamma ( B_\theta  )$ the set $\mathfrak{m}_J$ 
  is a maximal ideal in $C (\T^2)$. 
  
  Now let $\varrho \in \sP (\Gamma ( B_\theta  ))$,  $z = \mu \lambda \kappa (\varrho) \in \T^2$
  and $I_\varrho \subset \Gamma ( B_\theta  )$ be the kernel of $\pi_\varrho$. Then
  $I_\varrho = \mu^{-1} (z) = \mathfrak{m}_z \, \Gamma ( B_\theta  )$, where $\mathfrak{m}_z\subset C (\T^2)$ is the maximal
  ideal of functions vanishing at $z$. Therefore, the quotient algebra $\Gamma ( B_\theta  )/ I_\varrho$ is isomorphic to
  $(B_\theta)_z$, and the state
  $\varrho$ factors through a pure state $\overline{\varrho} :  (B_\theta)_z \to \C$.
  Hence  $\overline{\varrho} \in (P_\theta)_z$ and $\Pi (\overline{\varrho})  = \varrho$ by construction.
  This proves surjectivity of $\Pi$.
\end{proof}


Now we can compute the homotopy groups of $\sP (\Gamma ( B_\theta  ))$. By the lemma,
these coincide with the homotopy groups of the total space $P_\theta$. The long exact homotopy sequence of the
fiber bundle $P_\theta \to \T^2$ is given by
\[
  \ldots
  \to \pi_{k+1} (\T^2) \to \pi_k (\C\bbP^{q-1} ) \to \pi_k ( P_\theta ) \to \pi_k (\T^2) \to
  \pi_{k-1}  (\C\bbP^{q-1} ) \to \ldots \ .
\]
Because $\C\bbP^{q-1}$ is connected and simply connected one obtains for $\theta = \frac pq$
\[
  \pi_k (\sP (\fA_\theta)) = \pi_k (P_\theta) =
  \begin{cases}
    \{ 0\} &\text{for } k =0 \ , \\
    \Z^2 &\text{for } k =1 \ , \\
    \pi_k (\C\bbP^{q-1} ) & \text{for } k \geq 2 \ .
  \end{cases}
\]
By the long exact homotopy sequence of the fiber bundle $\bbS^{2q-1} \to \C\bbP^{q-1}$, the homotopy groups
$\pi_k (\C\bbP^{q-1})$ can be described in terms of the homotopy groups of the spheres.
A straightforward computation gives
\[
  \pi_k (\C\bbP^{q-1}) =
  \begin{cases}
    \{ 0\} &\text{for } k =0,1 \ , \\
    \Z &\text{for } k =2 \ , \\
    \pi_k (\bbS^{2q-1} ) & \text{for } k > 2 \ .
  \end{cases}
\]
This entails, still for $\theta = \frac pq$, 
\[
  \pi_k (\sP (\fA_\theta)) = \pi_k (P_\theta) = 
  \begin{cases}
    \Z &\text{for } k =2 \ , \\
    \{ 0\} &\text{for } 2 < k < 2q-1 \ , \\
    \Z  &\text{for } k = 2q-1 \ , \\
    \pi_k (\bbS^{2q-1} ) & \text{for } k > 2q-1 \ ,
  \end{cases}
\]
and indicates the change of the homotopy groups $\pi_k (\sP (\fA_\theta)) = \pi_k (\C\bbP^{q-1})$
along the parameter $\theta = \frac pq$. In particular one concludes that the family of pure state spaces
$\big(\sP (\fA_\theta)\big)_{\theta \in \R}$ is nowhere locally trivial. 
By \cite[3.1.~Thm.]{RieffelPLMS1983}, all rational rotation algebras are Morita equivalent to the commutative
$C^*$-algebra $C(\T^2)$, but their state spaces have homotopy types different from the torus. 
In general, the homotopy type of the pure state space of a $C^*$-algebra is therefore not Morita invariant.


\appendix
\section{Contractibility of the Infinite-Dimensional Unit Sphere}
\label{sec:contractible_sphere}

It has been known for many decades that the unit sphere of a separable infinite-dimensional Hilbert space is contractible \cite{Kakutani_SH_contractible}. The same result holds without the condition of separability by very nearly the same proof, but we are unfortunately unable to find this non-separable version in the literature. We provide this short appendix to show how to handle the non-separable case.

\begin{thm}
Let $\cH$ be an infinite-dimensional Hilbert space. The unit sphere $\bbS \cH$ is contractible.
\end{thm}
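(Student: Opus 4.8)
The plan is to run Kakutani's classical contraction argument \cite{Kakutani_SH_contractible}, after isolating the one structural feature of a separable Hilbert space that it actually uses: a linear isometry $S : \cH \rightarrow \cH$ whose range lies in the orthogonal complement of some unit vector. Such an $S$ exists in \emph{every} infinite-dimensional Hilbert space, which is what makes the separability hypothesis superfluous.

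To produce $S$, note that since $\cH$ is infinite-dimensional it is not the span of any finite set, so a greedy choice of linearly independent vectors followed by Gram--Schmidt yields a countably infinite orthonormal system $\Psi_1, \Psi_2, \ldots \in \bbS\cH$. Let $\cK$ be the closed linear span of $\{\Psi_n : n \in \bbN\}$ and use the decomposition $\cH = \cK \oplus \cK^\perp$. Define $S$ to act on $\cK$ as the unilateral shift $\Psi_n \mapsto \Psi_{n+1}$ (extended by linearity and continuity to an isometry of $\cK$) and as the identity on $\cK^\perp$. Then $S$ is a linear isometry of $\cH$ with $S\cH \subseteq \{\Psi_1\}^\perp$, so $\ev{\Psi_1, S\Phi} = 0$ for all $\Phi \in \cH$. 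Moreover $S\Phi \neq -\Phi$ whenever $\Phi \neq 0$: iterating $S\Phi = -\Phi$ gives $\Phi = (-1)^k S^k\Phi$ with $S^k\cH \subseteq \{\Psi_1,\ldots,\Psi_k\}^\perp$, forcing $\Phi \perp \cK$, whence $S\Phi = \Phi$ and $\Phi = 0$. Consequently the line segment from $\Phi$ to $S\Phi$ never passes through the origin (a zero on the segment would force $S\Phi = -\Phi$).

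The contraction of $\bbS\cH$ to the point $\Psi_1$ is then assembled from two normalized straight-line homotopies. For $\Phi \in \bbS\cH$ and $s \in I$ set
\[
H_1(\Phi, s) = \frac{(1-s)\Phi + sS\Phi}{\norm{(1-s)\Phi + sS\Phi}},
\]
which is well-defined and continuous by the previous paragraph, with $H_1(\Phi,0) = \Phi$ and $H_1(\Phi,1) = S\Phi$. Using $\ev{\Psi_1, S\Phi} = 0$ one has $\norm{(1-s)S\Phi + s\Psi_1}^2 = (1-s)^2 + s^2 \geq \tfrac12 > 0$, so
\[
H_2(\Phi, s) = \frac{(1-s)S\Phi + s\Psi_1}{\norm{(1-s)S\Phi + s\Psi_1}}
\]
is well-defined and continuous, with $H_2(\Phi,0) = S\Phi$ and $H_2(\Phi,1) = \Psi_1$. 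Since $H_1$ and $H_2$ agree at the join $s = 1$ and $s = 0$ respectively, concatenating them yields a continuous map $\bbS\cH \times I \rightarrow \bbS\cH$ from $\id_{\bbS\cH}$ to the constant map at $\Psi_1$, so $\bbS\cH$ is contractible. There is no real obstacle here; the only point requiring any adjustment relative to the separable proof is the construction of $S$ without a countable orthonormal basis of the whole space, which is handled above by letting $S$ fix the orthogonal complement of a countable orthonormal system, and everything else is routine.
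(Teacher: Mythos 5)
Your proof is correct, and it follows the same Kakutani-style template as the paper (a straight-line homotopy from the identity to an isometry whose range is orthogonal to a fixed unit vector, followed by a straight-line homotopy from that isometry to the constant map), but the construction of the isometry is genuinely different. The paper takes a full orthonormal basis, partitions it by Zorn's lemma into countably infinite pieces, and shifts simultaneously within every piece; this produces an isometry with no eigenvectors at all, so nonvanishing of $(1-t)\Omega + tU\Omega$ is immediate. You instead take a single countable orthonormal system, shift on its closed span $\cK$, and fix $\cK^\perp$ pointwise. Your $S$ then has many eigenvectors (eigenvalue $1$ on $\cK^\perp$), so you must argue separately that the segment from $\Phi$ to $S\Phi$ misses the origin; your exclusion of $S\Phi=-\Phi$ via $\Phi\in\bigcap_k\{\Psi_1,\dots,\Psi_k\}^\perp$ is correct, though you leave implicit the small step that a zero at parameter $s\in(0,1)$ gives $S\Phi=-\tfrac{1-s}{s}\Phi$ and that $S$ being isometric forces the scalar to equal $1$ (so only $S\Phi=-\Phi$ needs excluding); it would be worth stating. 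What your route buys is economy of choice principles: you need only a countable orthonormal system (dependent choice) rather than an orthonormal basis of all of $\cH$ plus a Zorn partition. What the paper's route buys is that the eigenvector-free isometry makes the nonvanishing of the first homotopy's denominator entirely uniform and removes the case analysis you perform.
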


\begin{proof}
Let $\cE$ be an orthonormal basis for $\cH$. By a simple application of Zorn's lemma, there exists a partition of $\cE = \bigcup_i \cE^i$ into pairwise disjoint countably infinite subsets $\cE^i \subset \cE$. For each index $i$, let $\qty{\Psi_1^i, \ldots, \Psi_n^i,\ldots}$ enumerate the elements of $\cE^i$. Define an isometry $U:\cH \rightarrow \cH$ by setting $U(\Psi^i_n) = \Psi^i_{n+1}$ for all $i$ and $n$ and then extending $U$ to all of $\cH$ by linearity and continuity.

We claim $U$ has no eigenvectors. Suppose to the contrary that $\Omega \in \bbS \cH$ is an eigenvector of $U$ with eigenvalue $\lambda$. Given an index $i$, define observe that $\ev{\Psi^i_1, U\Omega} = 0$ by definition of $U$, hence $\ev{\Psi^i_1, \Omega} = \ev{\Psi^i_1, \lambda^{-1}U\Omega} = 0$. If $\ev{\Psi^i_n, \Omega} = 0$ for some $n \in \bbN$, then since $U$ is an isometry we have
\[
\ev{\Psi^i_{n+1}, \Omega} = \ev{U\Psi^i_{n}, \lambda^{-1}U\Omega} = \ev{\Psi^i_n, \lambda^{-1}\Omega} = 0.
\]
By induction we have $\ev{\Psi^i_n, \Omega} = 0$ for all $n$. Since $i$ was arbitrary, we see that $\Omega = 0$, a contradiction.

We define a homotopy $H:\bbS \hilbH \times I \rightarrow \bbS \hilbH$ by
\[
H(\Omega,t) = \frac{tU\Omega + (1 - t)\Omega}{\norm{tU\Omega + (1 - t)\Omega}}.
\]
Note that the denominator is never zero---this is clear when $t = 0$ and when $t = 1$, and for all other $t$ it follows from the fact that $U$ has no eigenvectors. This shows that the identity map on $\bbS \cH$ is homotopic to the restriction $U:\bbS \hilbH \rightarrow \bbS \hilbH$. 

Fix $i \in I$. Define a second homotopy $H':\bbS \hilbH \times I \rightarrow \bbS \hilbH$ by
\[
H'(\Omega,t) = \frac{t\Psi^i_1 + (1 - t)U\Omega}{\norm{t\Psi^i_1 + (1 - t)U\Omega}}.
\]
Again we note that the denominator is never zero---this is clear when $t = 0$ and when $t = 1$, and for all other $t$ it follows from the fact that $\ev{\Psi^i_1, U\Omega} = 0$ for all $\Omega \in \hilbH$. This gives a homotopy of the restriction $U:\bbS \hilbH \rightarrow \bbS \hilbH$ with the constant map at $\Psi_1^i$. 
\end{proof}

\bibliographystyle{amsalpha}
\bibliography{AQMref}

\end{document}